\DeclareMathAlphabet{\mathpzc}{OT1}{pzc}{m}{it}
\newtheorem{thm}{Theorem}[section]
\newtheorem{lemma}[thm]{Lemma}
\newtheorem{prop}[thm]{Proposition}
\newtheorem{cor}[thm]{Corollary}
\theoremstyle{remark}
\newtheorem{rem}[thm]{Remark}
\newtheorem{conj}[thm]{Conjecture}
\theoremstyle{definition}
\newtheorem{defn}[thm]{Definition}
\newtheoremstyle{Claim}{}{}{\itshape}{}{\itshape\bfseries}{:}{ }{#1}
\theoremstyle{Claim}
\newcommand{\Z}{{\mathbb{Z}}}
\newcommand{\T}{{\mathbb{T}^d}}
\renewcommand{\H}{\mathcal{H}}
\newcommand{\R}{\mathbb{R}}
\newcommand{\N}{\mathbb{N}}
\newcommand\sP{\mathpzc{P}}
\newcommand\sQ{\mathpzc{Q}}
\newcommand{\eps}{\varepsilon}
\newcommand{\norm}[1]{\left\lVert#1\right\rVert}
\DeclareMathOperator{\dive}{div}
\def\ds{\displaystyle}
\title{Transport equations with nonlocal diffusion and applications to Hamilton-Jacobi equations}
\author{Alessandro Goffi}
\date{\today}
\begin{document}

\maketitle

\begin{abstract} 
We investigate regularity and a priori estimates for Fokker-Planck and Hamilton-Jacobi equations with unbounded ingredients driven by the fractional Laplacian of order $s\in(1/2,1)$. As for Fokker-Planck equations, we establish integrability estimates under a fractional version of the Aronson-Serrin interpolated condition on the velocity field and Bessel regularity when the drift has low Lebesgue integrability with respect to the solution itself. Using these estimates, through the Evans' nonlinear adjoint method we prove new integral, sup-norm and H\"older estimates for weak and strong solutions to fractional Hamilton-Jacobi equations with unbounded right-hand side and polynomial growth in the gradient. Finally, by means of these latter results, exploiting Calder\'on-Zygmund-type regularity for linear nonlocal PDEs and fractional Gagliardo-Nirenberg inequalities, we deduce optimal $L^q$-regularity for fractional Hamilton-Jacobi equations.
\end{abstract}

\noindent
{\footnotesize \textbf{AMS-Subject Classification}}. {\footnotesize 35R11, 35F21, 35Q84, 35B65.}\\
{\footnotesize \textbf{Keywords}}. {\footnotesize Fractional Fokker-Planck equations, Fractional Hamilton-Jacobi equations with unbounded data, Maximal regularity, H\"older regularity, Adjoint method.}

 \tableofcontents
 
 \section{Introduction}
In this paper, we analyze regularity properties of transport equations of Fokker-Planck-type and Hamilton-Jacobi equations with fractional diffusion driven by a fractional power of the Laplacian, $(-\Delta)^s$, with subcritical order $s\in(\frac12,1)$. In particular, we address well-posedness, parabolic Bessel regularity and integrability estimates for solutions to (backward) fractional Fokker-Planck equations of the form
\begin{equation}\label{fp}
\begin{cases}
-\partial_t \rho(x,t)+(-\Delta)^s\rho(x,t)+\dive(b(x,t)\, \rho(x,t))=0&\text{ in }Q_\tau:=\T\times(0,\tau)\ ,\\
\rho(x,\tau)=\rho_\tau(x)&\text{ in }\T\ ,
\end{cases}
\end{equation}
where the nonlocal diffusion operator is defined on the flat torus $\T\equiv\R^d\backslash\Z^d$ \cite{RS}, under ``rough'' integrability conditions on the velocity field, mainly when either $b\in L^\sQ_t(L^\sP_x)$ (cf \eqref{fractionalAS} below) or $b\in L^k(\rho\,dxdt)$, $k>1$, without requiring a control on its divergence. \\
Our second aim is to apply the  results for the above transport-diffusion equation to obtain a priori gradient estimates for strong solutions and regularization effects for weak solutions of fractional Hamilton-Jacobi equations with subcritical diffusion of the form
\begin{equation}\label{hjb}
\begin{cases}
\ds \partial_tu(x,t) +(-\Delta)^s u(x,t) + H(x, Du(x,t)) = f(x,t) & \text{in $Q_T$}, \\
u(x, 0) = u_0(x) & \text{in $\T$,}
\end{cases}
\end{equation}
where $f\in L^q(Q_T)$ for some $q>1$ and $H(x,Du)\sim |Du|^\gamma$, $\gamma>1$, i.e. $H$ has superlinear gradient growth. \\

Following the approach in \cite{CG2,CG5}, we first obtain Sobolev-type regularity for solutions to \eqref{fp}. This level of regularity is crucial to derive new integral, sup-norm and H\"older estimates for solutions to \eqref{hjb} by means of the nonlinear adjoint method introduced by L.C. Evans \cite{Evans,EvansCVPDE}. These results are then combined  with Gagliardo-Nirenberg interpolation inequalities and maximal regularity in Lebesgue spaces for fractional heat equations to obtain optimal regularity in Lebesgue spaces for \eqref{hjb}. This approach to deduce a priori estimates for nonlinear problems has been inspired by \cite{AC} (see also \cite{Bens1,Bens2} for later contributions), where semi-linear equations with quadratic growth in the gradient have been studied. These interpolation methods have been also employed in e.g. \cite{S2} (see also the references therein) and recently revived in \cite{GomesBook,Gomessub,CG5} in the context of Mean Field Games \cite{ll,LLtime}. In particular, our results extend those in \cite{CG2,CG5} in the fractional framework for $s\in(1/2,1)$, both for \eqref{fp} and \eqref{hjb}. \\

As announced, in order to study the regularity properties of \eqref{fp} we need to extend well-known results for linear viscous equations with unbounded coefficients to the fractional framework. In the viscous case, the first works date back to \cite{LSU,Aronson,AS} for linear and quasi-linear problems, see also \cite{BOP} for the case of measurable ingredients. Within this framework, well-posedness and integrability estimates are well-established when $b\in L^\sQ_t(L^\sP_x)$ with $\sQ,\sP$ satisfying 
\[
\frac{d}{2\sP}+\frac1\sQ\leq\frac12\ ,\sP\in[d,\infty]\ ,\sQ\in[2,\infty]\ ,
\]
the so-called Aronson-Serrin interpolated condition. We emphasize that this assumption on the drift has been shown to be sharp to get integrability estimates in Lebesgue classes, at least in the viscous case, cf \cite{BCCS}. \\
As far as we know, nonlocal heat equations with unbounded coefficients have been treated in \cite{Leonori}, see also \cite{JS} for gradient perturbations of the fractional Laplacian and the nonlinear analysis carried out in \cite{AP,PPS} in the context of (fractional) Kardar-Parisi-Zhang models. The well-posedness and integrability results we present here are new when the velocity field satisfies a fractional version of the above Aronson-Serrin condition, i.e. $b\in L^\sQ_t(L^\sP_x)$ with $\sQ,\sP$ fulfilling
\begin{equation}\label{fractionalAS}
\frac{d}{2s\sP}+\frac1\sQ<\frac{2s-1}{2s}\ .
\end{equation}
In particular, we mention that in this setting we are not able to cover the equality in \eqref{fractionalAS} neither for the well-posedness nor for integrability estimates of solutions of \eqref{fp}, and this remains at this stage an open problem.\\
The second step in the analysis of \eqref{fp} concerns fractional Bessel regularity estimates of solutions to \eqref{fp} when $b\in L^k(\rho\,dxdt)$ for some $k>1$, i.e. in terms of the crossed term
\begin{equation}\label{cross}
\iint_{Q_\tau}|b|^k\rho\,dxdt\ ,k>1\ ,
\end{equation}
which is widely analyzed for classical viscous Fokker-Planck equations in \cite{MPR,Porr}. In particular, we prove that for some suitable $k>1$ one has the estimate
\[
\|(-\Delta)^{s-\frac12}\rho\|_{\sigma'}\lesssim \iint_{Q_\tau}|b|^k\rho+\|\rho_\tau\|_{p'}
\]
for $\sigma'$ in some range determined in terms of the regularity $p'$ of the terminal data. This bound is obtained by duality, following \cite{MPR,CG2,CG5}, via a maximal regularity estimate for nonlocal equations with divergence-type terms of the form
\[
\|(-\Delta)^{s-\frac12}\rho\|_{\sigma'}\lesssim \|b\rho\|_{\sigma'}+\|\rho_\tau\|_{p'}\ .
\]
These estimates are fundamental to study regularity properties for PDEs arising in Mean Field Games, cf \cite{CT,CG2,Porr,PorrUMI}, see also \cite{CGoffi} for the time-fractional framework. We remark that when $b=-D_pH(x,Du)$, \eqref{fp} becomes the adjoint equation to \eqref{hjb} and bounds on the quantity \eqref{cross} are natural for the mean field equations by duality \cite{Porr,CG1,CG2}.\\

Owing to these results for \eqref{fp}, we deduce sup-norm, integral and H\"older estimates for solutions to \eqref{hjb} with $f\in L^q$. These bounds are obtained by duality and exploit the aforementioned Bessel regularity properties of solutions to Fokker-Planck equations. \\
To our knowledge, these estimates for fractional Hamilton-Jacobi equations have not yet been investigated in the literature, especially in the context of unbounded coefficients in $L^p$ scales. Furthermore, with respect to H\"older estimates, we provide a H\"older's regularization effect when $\gamma\geq 2s$.\\ 
Finally, we are able to partially prove optimal $L^q$-regularity as addressed in \cite{CG4,CG5} for elliptic and evolutive equations respectively, driven by the Laplacian. This means, within our context, a control on $\partial_tu,(-\Delta)^su,|Du|^\gamma\in L^q$ in terms of $f\in L^q$ for an appropriate range of the integrability exponent $q>\bar q$, see Section \ref{sec;conj} for details on the threshold $\bar q$. As a result, we find that \eqref{hjb} behaves in terms of regularity as the fractional heat equation for appropriate values of $q$, despite the presence of the nonlinear coercive term $H=H(x,p)\sim |p|^\gamma$. By letting $s\to1$ we recover the same results of the viscous case, but we produce only partial a priori estimates in the supercritical regime $\gamma>2s$.\\
 In the viscous stationary case, the proof has been given refining the integral Bernstein method, which, however, does not seem the right path to treat both fractional and time-dependent problems like \eqref{hjb}.\\
We recall that maximal $L^q$-regularity properties of Calder\'on-Zygmund-type are well-known for general abstract linear evolution equations, see e.g. \cite{HP,Lamberton,Lunardi}, and are recalled in Lemma \ref{maximal} below, while in the case $s=\frac12$ a result for nonlocal equations with drift terms can be found in \cite{Zhang}.\\

More precisely, our strategy consists in regarding \eqref{hjb} as a perturbation of a fractional heat equation
\[
\partial_t u(x,t)+(-\Delta)^s u(x,t)=f(x,t)-H(x,Du)
\]
where $H(x,Du)\sim |Du|^\gamma$. Then, maximal regularity for linear nonlocal problems, cf \cite{HP}, applied to the above equation yields
\[
\|\partial_t u\|_q+\|(-\Delta)^s u\|_q \lesssim \||Du|^\gamma\|_{q}+\|f\|_q\ .
\]
The second step relies on applying fractional Gagliardo-Nirenberg inequalities involving integral norms of the form
 \[
 \|Du\|_{L^{\gamma q}_{x,t}}^\gamma\lesssim\|u\|_{L^q(H_q^{2s})}^{\gamma\theta}\|u\|_{L^\infty_t(L^r_x)}^{\gamma(1-\theta)}
 \]
 for some $r\in(1,\infty]$, $\theta\in(0,1)$ such that $\theta\gamma<1$ when $\gamma<2s$, and those involving H\"older norms
 \[
 \|Du\|_{L^{\gamma q}_{x,t}}^\gamma\lesssim\|u\|_{L^q(H_q^{2s})}^{\gamma\theta}\|u\|_{L^\infty_t(C_x^\alpha)}^{\gamma(1-\theta)}\ ,\theta<\frac{1}{\gamma}\ ,
 \]
for $\gamma\geq2s$, cf Lemmas \ref{GN1} and \ref{GN2} and \cite{NHolder,Miranda}. This scheme allows to show that maximal $L^q$-regularity for \eqref{hjb} occurs for strong solutions when $f\in L^q(Q_T)$, $q>\frac{d+2s}{(2s-1)\gamma'}$, $\frac{d+6s-2}{d+2s}< \gamma< \frac{s}{1-s}$, i.e. we have
 \[
 \|\partial_t u\|_{L^q(Q_T)}+\| (-\Delta)^su\|_{L^q(Q_T)}+\||Du|^\gamma\|_{L^q(Q_T)}\leq C(\|f\|_{L^q(Q_T)},\|u(0)\|_{W^{2s-2s/q,q}(\T)},q,d,T,s,H)\ .
 \]
We refer to Remark \ref{strongweak} and Remark \ref{restr} for further comments on the restrictions for $\gamma$. At this stage, we do not know neither if our restrictions when $\gamma>2s$ are sharp nor counterexamples to the maximal regularity below the threshold $\frac{d+2s}{(2s-1)\gamma'}$ when $\gamma\leq 2s$. However, by letting $s\to1$ our results agree with those obtained in the local case \cite{CG4,CG5}.\\
We believe that our duality approach to obtain integral and H\"older bounds, together with the maximal regularity results, can be adapted to the stationary counterpart of \eqref{fp} and \eqref{hjb}, leading to new a priori estimates. This will be matter of a future research.\\
Finally, we remark in passing that, in the classical viscous case, the purely quadratic regime $\gamma=2$ can be addressed using the Hopf-Cole transform, cf \cite[Lemma 4.2]{CLLP}. Here, however, when $\gamma=2s$ it is not known whether there exists a fractional analogue of that transformation which allows to reduce \eqref{hjb} into a simpler fractional PDE. Thus, even the natural (critical) growth case becomes not trivial to analyze. \\
 \par\smallskip
 We now recall some related results for \eqref{fp} and \eqref{hjb}. As for fractional Fokker-Planck equations, when $b\in L^\infty$ or some control on the divergence is assumed, we refer to \cite{CCDNV} for stationary problems and to \cite{CG1} for the evolutive case. Instead, the viscous case is well-known, even under weaker assumptions on the velocity field \cite{LSU,MPR,BOP,BCCS,CT,CG2,Porr,CG5}.\\
As for Hamilton-Jacobi equations, H\"older's regularity results have been largely investigated for parabolic problems in the borderline cases $s=0$ and $s=1$. For first order and second order degenerate problems, we refer to \cite{Carda1,CC,CV,CPT}, while we mention \cite{LSU,LU} for the uniformly parabolic case. Recently, H\"older estimates for second order degenerate problems with unbounded right-hand side have been analyzed in \cite{CSil}, see also \cite{SV} for PDEs driven by the Laplacian, via De Giorgi's techniques. H\"older, integral and sup-norm estimates for the parabolic problem have been already addressed in the paper \cite{CG5} for the viscous case $s=1$, and we recover those results by letting $s\to1$. As for integrability estimates, we refer to \cite{CGPT} for the degenerate case and \cite{CG5} for the viscous problem.\\
 H\"older's regularity of fully nonlinear nonlocal equations with super-quadratic first-order terms has been treated in \cite{CardaRainer}, where the regularity stems from the coercivity of $H$ rather than the ellipticity. H\"older's regularization effect of solutions to fractional Hamilton-Jacobi equations with first order terms having at most critical growth $\gamma= 2s$ has been observed by L. Silvestre in \cite{SilvestreHolderHJ}. In this case, the author has also obtained H\"older bounds in the fractional supercritical regime $2s<\gamma<2s+\eps$ imposing some smallness conditions on the data. More recently, a regularization effect when $s=1/2$ has been investigated in \cite{BesovHJ} under a smallness condition on the initial datum in Besov scales. \\
 Instead, the literature on Lipschitz regularity is huge. The conservation of Lipschitz regularity (i.e. with $u(0)\in W^{1,\infty}$) for every $s\in(0,1)$ and a smoothing effect when $s\in(1/2,1)$ go back to \cite{Imbert1} (see also \cite{Imbert2,KW}). Besides, Lipschitz and further regularity for nonlocal Hamilton-Jacobi equations has been investigated in the case of critical diffusion $s=1/2$ by L. Silvestre in \cite{SilvestreCritical}. Gradient regularity for viscosity solutions of coercive fractional Hamilton-Jacobi equations has been widely analyzed using viscosity solutions' techniques. In \cite{Barles1} the authors have analyzed Lipschitz regularity of solutions via the Ishii-Lions method when $f$ is bounded (which requires the restriction $\gamma<2s$, as for the classical viscous case $s=1$) and via a weak version of the Bernstein method in the periodic setting \cite{BLT}, where $f\in W^{1,\infty}$ in the space variable and $\gamma>1$, even for more general integro-differential operators than fractional powers of the Laplacian. We finally mention that fractional Hamilton-Jacobi-type PDEs and regularity issues have been recently investigated in the framework of periodic homogenization problems \cite{BCT}.  \\
As for the stationary counterpart of \eqref{hjb} with unbounded terms in Lebesgue scales, we mention \cite{AP,Ochoa}.
 Related results for Hamilton-Jacobi equations, even degenerate, can be found in \cite{CG4,Lions85,BardiPerthame,baSW} and the references therein. Other regularity estimates for space-fractional Fokker-Planck equations, also combined with Hamilton-Jacobi equations in the context of Mean Field Games, can be found in \cite{CG1,CCDNV}, while for advection equations with fractional diffusion we refer the reader to \cite{SilvestreAdvectionHolder1,SilvestreAdvectionHolder2}.

\par\smallskip
\textit{Outline}. Section \ref{sec;main} presents a list of the main results and the assumptions used throughout the paper. Section \ref{sec;spaces} is devoted to introduce the main functional spaces and related embedding properties. Section \ref{sec;ffp} concerns the analysis of the well-posedness, Bessel regularity and integrability estimates for fractional Fokker-Planck equations, while Section \ref{sec;fhjb} comprises the applications to regularity issues for equations of Hamilton-Jacobi-type with nonlocal diffusion. Appendix \ref{eqdrift} collects some properties for advection equations with fractional diffusion.
\bigskip

{\bf Acknowledgements.} The author is member of the Gruppo Nazionale per l'Analisi Matematica, la Probabilit\`a e le loro Applicazioni (GNAMPA) of the Istituto Nazionale di Alta Matematica (INdAM). This work has been partially supported by 
the Fondazione CaRiPaRo
Project ``Nonlinear Partial Differential Equations:
Asymptotic Problems and Mean-Field Games''. \\
The author wishes to thank Dr. Marco Cirant for fruitful discussions all along the work, Prof. Luc Molinet and Prof. Baoxiang Wang for discussions about fractional Gagliardo-Nirenberg inequalities in Besov classes.

\section{Assumptions and main results}\label{sec;main}
Throughout all the manuscript we will assume $s\in(1/2,1)$ unless otherwise stated. Our first main results concern the fractional Fokker-Planck equation \eqref{fp}: in the first one, $b$ is assumed to belong to mixed Lebesgue classes in the fractional Aronson-Serrin zone, while in the second result parabolic Bessel regularity is studied in terms of the crossed quantity \eqref{cross}. More precisely, in this section for $\mu\in\R$ we deal with anisotropic spaces of the form
\[
\H_p^{\mu}(Q_T):=\{u\in L^p(0,T;H_p^\mu(\T));\partial_t u\in L^p(0,T;H_p^{\mu-2s}(\T))\}\ ,
\]
where $H_p^\mu(\T)$ is the space of Bessel potentials on the torus. We refer to Section \ref{parspaces} for additional properties of these spaces.\\
We will assume the following additional assumption, referring to Appendix \ref{eqdrift} for further discussions on its validity.
\begin{itemize}
\item[(I)] There exists a unique weak solution to the dual problem of \eqref{fp}
\begin{equation}\label{drift}
\begin{cases}
\partial_t v(x,t)+(-\Delta)^sv(x,t)-b(x,t)\cdot Dv(x,t)=f(x,t)&\text{ in }Q_{\omega,\tau}:=\T\times(\omega,\tau)\ ,\\
v(x,\omega)=v_\omega(x)&\text{ in }\T\ .
\end{cases}
\end{equation}
with $\omega\in[0,\tau)$, where $b$ satisfies \eqref{fractionalAS}. In addition, if $v(\omega)\geq0$, then $v\geq0$ a.e. on $Q_\omega$.
\end{itemize}
\begin{thm}\label{estadjfrac}
Let $b\in L^\sQ(0,\tau; L^\sP(\T))$ with $\sP \in(d/(2s-1),\infty)\text{ and }\sQ\in(2s/(2s-1),\infty]$ satisfying 
\begin{equation*}
\frac{d}{2s\sP}+\frac{1}{\sQ}< \frac{2s-1}{2s}\ ,
\end{equation*}
and $\rho_\tau\in H^{s-1}(\T)$ with $\rho_\tau\in L^1(\T)$. Then, there exists a weak solution $\rho\in\H_2^{2s-1}(Q_\tau)$ to \eqref{fp}. If, in addition, $\rho_\tau\in L^p(\T)$, $p\in(1,\infty]$, then $\rho\in L^\infty(0,\tau;L^p(\T))$. Finally, if (I) holds and $\rho_\tau\geq0$, the solution is unique and $\rho\geq0$ a.e. on $Q_\tau$.
\end{thm}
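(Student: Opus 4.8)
The plan is to construct $\rho$ as a limit of solutions of problems with regularized drift, deriving the $\H_2^{2s-1}$‑estimate and (when $\rho_\tau\in L^p$) the $L^\infty(L^p)$‑estimate uniformly, and to obtain positivity and uniqueness afterwards by duality with \eqref{drift} under hypothesis (I).

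\textbf{Step 1: regularization.} Let $b_n$ be a mollification of $b$, so that $b_n\in C^\infty$, $\|b_n\|_{L^\sQ(0,\tau;L^\sP(\T))}\leq\|b\|_{L^\sQ(0,\tau;L^\sP(\T))}$ and $b_n\to b$ in $L^\sigma(0,\tau;L^\sP(\T))$ for every $\sigma\leq\sQ$ finite. For smooth $b_n$ the classical linear theory gives a unique smooth solution $\rho_n$ of \eqref{fp} with $b$ replaced by $b_n$; since $(-\Delta)^s$ obeys the positive maximum principle, $\rho_\tau\geq0$ forces $\rho_n\geq0$, and integrating the equation over $\T$ gives $\|\rho_n(t)\|_{L^1}=\|\rho_\tau\|_{L^1}$ in that case. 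Splitting $\rho_\tau=\rho_\tau^+-\rho_\tau^-$ and using linearity, one gets $\|\rho_n\|_{L^\infty(0,\tau;L^1)}\leq\|\rho_\tau\|_{L^1}$ for general $\rho_\tau\in L^1(\T)$.

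\textbf{Step 2: uniform a priori estimates.} For the $L^\infty(L^p)$‑bound (when $\rho_\tau\in L^p$), fix $t_0\in(0,\tau)$ and $\psi\in L^{p'}(\T)$ and let $v_n$ solve the regularized version of \eqref{drift} on $\T\times(t_0,\tau)$ with $v_n(t_0)=\psi$ and $f\equiv0$. Pairing \eqref{fp} with $v_n$ and \eqref{drift} with $\rho_n$, using the self‑adjointness of $(-\Delta)^s$ and integrating by parts in time, yields the duality identity
\[
\int_\T \rho_n(t_0)\,\psi\,dx=\int_\T \rho_\tau\, v_n(\tau)\,dx ,
\]
hence $|\int_\T\rho_n(t_0)\psi|\leq\|\rho_\tau\|_{L^p}\|v_n(\tau)\|_{L^{p'}}\leq C\|\rho_\tau\|_{L^p}\|\psi\|_{L^{p'}}$, the last step being the $L^{p'}$‑stability of \eqref{drift} which holds uniformly in $n$ and in $t_0$ precisely because $\sP,\sQ$ satisfy the strict condition \eqref{fractionalAS} and, for $s>\tfrac12$, the first‑order term $b\cdot Dv$ is subordinate to $(-\Delta)^s v$ (cf.\ Appendix \ref{eqdrift}); taking the supremum over $\|\psi\|_{L^{p'}}\leq1$ gives $\|\rho_n\|_{L^\infty(0,\tau;L^p)}\leq C\|\rho_\tau\|_{L^p}$ (with $p=\infty$ recovered by letting $p\to\infty$ or by a De Giorgi–type iteration). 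For the $\H_2^{2s-1}$‑bound I would test \eqref{fp} against $(-\Delta)^{s-1}(\rho_n-\bar\rho_\tau)$ — equivalently, invoke maximal regularity for the fractional heat equation (Lemma \ref{maximal}) applied to $-\partial_t\rho_n+(-\Delta)^s\rho_n=-\dive(b_n\rho_n)$: the diffusion produces $\|\rho_n\|_{L^2(0,\tau;\dot{H}^{2s-1})}^2$, the datum contributes $\|\rho_\tau\|_{H^{s-1}}^2$, and the drift term, written as $\iint_{Q_\tau}b_n\rho_n\cdot D(-\Delta)^{s-1}\rho_n$ and using that $D(-\Delta)^{s-1}$ has order $2s-1\in(0,1)$, is bounded by H\"older, the fractional Sobolev embedding $\dot{H}^{2s-1}\hookrightarrow L^{2d/(d-2(2s-1))}$ and interpolation with the $L^\infty(L^1)$‑ (or $L^\infty(L^p)$‑) bound; the strict inequality in \eqref{fractionalAS} leaves room to absorb a small multiple of $\|\rho_n\|_{L^2(\dot{H}^{2s-1})}^2$ and to close by a Gr\"onwall argument on $(t,\tau)$. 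Reading $\partial_t\rho_n=(-\Delta)^s\rho_n+\dive(b_n\rho_n)\in L^2(0,\tau;H^{-1})$ off the equation then gives $\|\rho_n\|_{\H_2^{2s-1}(Q_\tau)}\leq C(\|\rho_\tau\|_{H^{s-1}},\|\rho_\tau\|_{L^1})$, uniformly in $n$.

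\textbf{Step 3: passage to the limit.} The uniform $\H_2^{2s-1}$‑bound and the Aubin–Lions lemma (using $H^{2s-1}\hookrightarrow\hookrightarrow L^2(\T)\hookrightarrow H^{-1}(\T)$, the first embedding being compact since $2s-1>0$ and $\T$ is compact) give, along a subsequence, $\rho_n\rightharpoonup\rho$ in $\H_2^{2s-1}(Q_\tau)$ and $\rho_n\to\rho$ strongly in $L^2(Q_\tau)$; combined with $b_n\to b$ in $L^\sigma(L^\sP)$ and the uniform $L^\infty(L^1)$‑bound, interpolation gives $b_n\rho_n\to b\rho$ in $L^1(Q_\tau)$, so one passes to the limit in the weak formulation of \eqref{fp}, obtaining a weak solution $\rho\in\H_2^{2s-1}(Q_\tau)$; lower semicontinuity of the norms under weak‑$*$ convergence preserves the $L^\infty(L^p)$‑bound when $\rho_\tau\in L^p$.

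\textbf{Step 4: positivity, uniqueness, and the main difficulty.} If $\rho_\tau\geq0$ then $\rho\geq0$, either because $\rho_n\geq0$, or, under (I), by testing the identity of Step 2 with $0\leq\psi\in C^\infty(\T)$ and using that the solution of \eqref{drift} with nonnegative datum supplied by (I) is nonnegative, so $\int_\T\rho(t_0)\psi=\int_\T\rho_\tau v(\tau)\geq0$ for a.e.\ $t_0$. For uniqueness, if $\rho_1,\rho_2\in\H_2^{2s-1}(Q_\tau)$ solve \eqref{fp} with the same datum, then $w:=\rho_1-\rho_2$ solves \eqref{fp} with zero terminal datum; for $f\in C_c^\infty(Q_\tau)$ let $v$ be the weak solution of \eqref{drift} on $(0,\tau)$ with $v(0)=0$ given by (I), and, after a mollification/density argument justifying the pairing of the two adjoint weak solutions, the duality identity forces $\iint_{Q_\tau}wf=0$ for all such $f$, whence $w\equiv0$. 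The two genuine obstacles I anticipate are: first, that the absorption of the drift in Step 2 (and the $L^{p'}$‑stability of \eqref{drift}) must be carried out under exactly the scaling of \eqref{fractionalAS}, which is sharp — the borderline equality being left open — and this is where $s>\tfrac12$ is indispensable; second, the rigorous justification in Step 4 of the integration by parts coupling a weak solution of \eqref{fp} in $\H_2^{2s-1}$ with a weak solution of \eqref{drift}, which requires care with time‑traces and an approximation argument.
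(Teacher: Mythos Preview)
Your overall scheme is sound but differs from the paper's in two places, and one of those differences hides real work you have not done.

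\textbf{Existence.} For existence in $\H_2^{2s-1}$ the paper does not regularize $b$ and pass to the limit; it runs a Leray--Schauder fixed point directly on $\H_2^{2s-1}$, freezing $m$ in the transport term and solving $-\partial_t\rho+(-\Delta)^s\rho=\sigma\,\dive(b\,m)$ by maximal regularity, then proving compactness of the map. Your approximation route is legitimate, and the energy estimate you sketch (maximal regularity plus H\"older/interpolation between $L^\infty(L^1)$ and the embedding $\H_2^{2s-1}\hookrightarrow L^{2(d+2s)/(d+2-2s)}$, followed by absorption via Young) is exactly the computation the paper carries out inside the Leray--Schauder a priori bound. So Steps~1--3 are a repackaging of the same estimate, and both approaches are equivalent in strength.

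\textbf{$L^\infty(L^p)$ estimate.} Here you diverge more substantially, and there is a gap. The paper obtains the $L^\infty(L^p)$ bound \emph{directly} on \eqref{fp} via Duhamel and the decay of the fractional heat semigroup between Bessel spaces: writing $\tilde\rho(t)=\mathcal T_t\rho_\tau-\int_0^t\mathcal T_{t-\omega}\dive(b\tilde\rho)\,d\omega$, applying $\|\mathcal T_t u\|_{L^p}\le Ct^{-\frac{d}{2s}(\frac1a-\frac1p)-\frac1{2s}}\|u\|_{H_a^{-1}}$ with $\frac1a=\frac1\sP+\frac1p$, and closing on a short interval determined solely by $\|b\|_{L^\sQ(L^\sP)}$ (the integrability of the kernel is exactly \eqref{fractionalAS}), then iterating. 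You instead invoke the $L^{p'}$--stability of \eqref{drift} and cite Appendix~\ref{eqdrift}. But the appendix proves only a comparison principle and $\H_2^1$--existence for \eqref{drift}; it does \emph{not} supply a uniform-in-$n$ bound $\|v_n(\tau)\|_{L^{p'}}\le C\|\psi\|_{L^{p'}}$ with $C$ depending only on $\|b\|_{L^\sQ(L^\sP)}$. The naive energy argument (multiply by $|v|^{p'-2}v$) produces $\int\dive(b_n)|v_n|^{p'}$, which you cannot control uniformly. The statement you need \emph{is} true, but proving it requires a Duhamel/short-time argument on \eqref{drift} (now with the smoothing estimate applied to $b\cdot Dv$) that is line-by-line the same computation the paper does on the Fokker--Planck side; in particular the singular exponent is again $-\frac{d}{2s\sP}-\frac1{2s}$ and its $\sQ'$-integrability is precisely \eqref{fractionalAS}. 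So your duality detour is not wrong, but it relocates---rather than avoids---the main estimate, and as written it is an unjustified step.

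\textbf{Positivity and uniqueness.} Your Step~4 matches the paper's Step~3 (Lemma~\ref{adjointunique}): both use the duality identity with \eqref{drift} under (I). Your caution about justifying the time-integration by parts between a $\H_2^{2s-1}$ and a $\H_2^1$ function is addressed in the paper by Remark~\ref{intparts}.
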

\begin{thm}\label{estFP2frac}
Let $\rho$ be a (non-negative) weak solution to \eqref{fp} and $1<\sigma'<\frac{d+2s}{2s-1}$.
\begin{itemize}
\item[(i)] There exists $C>0$, depending on $T,\sigma',d,s$ such that
\begin{equation*}
\|\rho\|_{\mathcal{H}_{\sigma'}^{2s-1}(Q_\tau)}\leq C\left(\iint_{Q_\tau} |b(x,t)|^{m'} \rho(x,t) \, dxdt  + \|\rho_\tau\|_{L^1(\T)}\right),
\end{equation*}
where
\[1<\sigma'<\frac{d+2s}{d+2s-1}\]
and 
\begin{equation*}\label{rdq1frac}
m' = 1 + \frac{d+2s}{\sigma(2s-1)}.
\end{equation*}
\item [(ii)]


There exists $C>0$, depending on $T,\sigma',d,s$ such that
\begin{equation*}
\|\rho\|_{\mathcal{H}_{\sigma'}^{2s-1}(Q_\tau)}\leq C\left(\iint_{Q_\tau} |b(x,t)|^{m'} \rho(x,t) \, dxdt  +  \|\rho_\tau\|_{L^{p'}(\T)}\right),
\end{equation*}
where either
\[
\sigma'=\frac{d+2s}{d+2s-1}\text{ with any finite }p>1\ ,
\]
or
\[
\sigma'>\frac{d+2s}{d+2s-1}\text{ with }p=\frac{d\sigma}{d+2s-\sigma}
\]
and
\begin{equation*}\label{rdq1frac}
m' = 1 + \frac{d+2s}{\sigma(2s-1)}.
\end{equation*}
\end{itemize}
\end{thm}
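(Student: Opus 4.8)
\emph{Proof strategy.} Both bounds are a priori estimates, so I would first reduce to the case of smooth $\rho$ and $b$ — mollifying $b$ and invoking the approximation and uniqueness from Theorem~\ref{estadjfrac} — producing constants independent of the regularization and letting it vanish at the end by lower semicontinuity; the core argument is then a duality/maximal-regularity scheme in the spirit of \cite{MPR,CG2,CG5}, closed by a scaling absorption. The first step is a maximal regularity estimate with a divergence source: reading \eqref{fp} as $-\partial_t\rho+(-\Delta)^s\rho=-\dive(b\rho)$ with terminal datum $\rho_\tau$ and using $\dive(b\rho)=(-\Delta)^{1/2}\big(\mathcal R\cdot(b\rho)\big)$, where $\mathcal R=D(-\Delta)^{-1/2}$ is the vector Riesz transform (bounded on $L^{\sigma'}(\T)$ for $1<\sigma'<\infty$), an application of $(-\Delta)^{-1/2}$ turns the equation into a plain fractional heat equation with right-hand side in $L^{\sigma'}(Q_\tau)$ and terminal datum $(-\Delta)^{-1/2}\rho_\tau$; the abstract maximal $L^{\sigma'}$-regularity of Lemma~\ref{maximal}, together with $(-\Delta)^{s-1/2}\rho=(-\Delta)^s(-\Delta)^{-1/2}\rho$ and with $\partial_t\rho$ read off the equation, then yields
\[
\|\rho\|_{\H_{\sigma'}^{2s-1}(Q_\tau)}\le C\big(\|b\rho\|_{L^{\sigma'}(Q_\tau)}+\|\rho_\tau\|_{L^{p'}(\T)}\big),\qquad C=C(T,\sigma',d,s),
\]
where $p'=1$ is admissible in case (i) because the $t=\tau$ trace space of $\H_{\sigma'}^{2s-1}(Q_\tau)$ contains $L^1(\T)$ precisely when $\sigma'<\frac{d+2s}{d+2s-1}$.

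Next I would split $\|b\rho\|_{L^{\sigma'}(Q_\tau)}$ by Hölder's inequality and then interpolate. Since $\rho\ge0$, writing $|b|^{\sigma'}\rho^{\sigma'}=(|b|^{m'}\rho)^{\sigma'/m'}\rho^{\sigma'(m'-1)/m'}$ and using Hölder with exponents $m'/\sigma'$ and $m'/(m'-\sigma')$ — admissible precisely when $\sigma'<m'$, which for $m'=1+\frac{d+2s}{\sigma(2s-1)}$ is exactly the standing hypothesis $\sigma'<\frac{d+2s}{2s-1}$ — one gets
\[
\|b\rho\|_{L^{\sigma'}(Q_\tau)}\le\Big(\iint_{Q_\tau}|b|^{m'}\rho\,dxdt\Big)^{1/m'}\|\rho\|_{L^{\nu}(Q_\tau)}^{\frac{\nu(m'-\sigma')}{m'\sigma'}},\qquad \nu:=\frac{\sigma'(m'-1)}{m'-\sigma'}.
\]
I would then control $\|\rho\|_{L^\nu(Q_\tau)}$ by interpolating between $L^\infty(0,\tau;L^{p'}(\T))$ and the parabolic Sobolev embedding $\H_{\sigma'}^{2s-1}(Q_\tau)\hookrightarrow L^{z}(Q_\tau)$ with $\frac1z=\frac1{\sigma'}-\frac{2s-1}{d+2s}$ (Section~\ref{parspaces}): the $L^\infty_tL^{p'}_x$ information is conservation of mass $\|\rho(t)\|_{L^1}=\|\rho_\tau\|_{L^1}$ — obtained by integrating \eqref{fp} over $\T$, since $\int_\T(-\Delta)^s\rho=\int_\T\dive(b\rho)=0$ and $\rho\ge0$ — in case (i), and the a priori estimate $\|\rho\|_{L^\infty(0,\tau;L^{p'})}\lesssim\|\rho_\tau\|_{L^{p'}}$ from Theorem~\ref{estadjfrac} in case (ii); this produces $\|\rho\|_{L^\nu(Q_\tau)}\le C\|\rho\|_{\H_{\sigma'}^{2s-1}(Q_\tau)}^{\theta}\|\rho_\tau\|_{L^{p'}(\T)}^{1-\theta}$ for a suitable $\theta\in(0,1)$ dictated by the scaling relation among $\nu,z,p'$.

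Inserting this into the maximal regularity estimate, the right-hand side carries the factor $\|\rho\|_{\H_{\sigma'}^{2s-1}(Q_\tau)}$ to the power $\theta\,\nu(m'-\sigma')/(m'\sigma')$; if this exponent is strictly less than $1$, Young's inequality absorbs that term into the left-hand side and leaves exactly the asserted estimate (with $\|\rho_\tau\|_{L^1}$ in case (i)). I expect the main — indeed the only genuinely delicate — point to be the verification that $\theta\,\nu(m'-\sigma')/(m'\sigma')<1$: this is a bookkeeping of the parabolic scaling exponents, and it is precisely this computation that forces the restriction $\sigma'<\frac{d+2s}{d+2s-1}$ in case (i) and pins down the pairing $p=\frac{d\sigma}{d+2s-\sigma}$ between the integrability of the datum and $\sigma'$ in case (ii); at the threshold $\sigma'=\frac{d+2s}{d+2s-1}$ this relation degenerates (formally $p=\infty$) and the scaling is no longer saturated, which is why any finite $p>1$ is then admissible.
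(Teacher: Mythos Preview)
Your strategy is the paper's strategy: maximal regularity in $\H_{\sigma'}^{2s-1}$ for the equation read as a fractional heat equation with divergence source (your Riesz-transform route is exactly the alternative proof the paper records in Remark~\ref{diffproof}), followed by the H\"older split $b\rho=|b|\rho^{1/m'}\cdot\rho^{1/m}$ and an absorption. The outcome is correct, but you are making the closing step harder than it is and mislocating where the parameter constraints enter.

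The point you have not noticed is that your H\"older exponent $\nu=\dfrac{\sigma'(m'-1)}{m'-\sigma'}$ \emph{equals} the embedding exponent $z$ with $\dfrac1z=\dfrac1{\sigma'}-\dfrac{2s-1}{d+2s}$; this identity is precisely what the prescribed value $m'=1+\dfrac{d+2s}{\sigma(2s-1)}$ encodes. Hence no interpolation against $L^\infty_tL^{p'}_x$ is needed: the parabolic embedding $\H_{\sigma'}^{2s-1}\hookrightarrow L^{\nu}$ (Lemma~\ref{embs}(i), or Lemma~\ref{embL^1frac} in case~(i)) already gives $\|\rho\|_{L^\nu}\lesssim \|\rho\|_{\H_{\sigma'}^{2s-1}}+\|\rho_\tau\|_{L^{p'}}$, and the exponent of $\|\rho\|_{\H_{\sigma'}^{2s-1}}$ appearing on the right is simply $\dfrac{\nu(m'-\sigma')}{m'\sigma'}=\dfrac{1}{m}<1$, so Young's inequality absorbs immediately for \emph{every} $\sigma'<\frac{d+2s}{2s-1}$. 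In particular, the ``delicate bookkeeping'' you anticipate is vacuous, and you do not need to invoke the $L^\infty_tL^{p'}_x$ bound from Theorem~\ref{estadjfrac} at all. The restrictions on $\sigma'$ and the pairing $p=\frac{d\sigma}{d+2s-\sigma}$ are \emph{not} produced by the absorption step; they come entirely from the trace/terminal-datum part of the maximal regularity estimate (as you yourself observe when you say $p'=1$ is admissible iff $\sigma'<\frac{d+2s}{d+2s-1}$), i.e.\ from the range in which $\|\rho_\tau\|_{W^{2s-1-2s/\sigma',\sigma'}}\lesssim\|\rho_\tau\|_{L^{p'}}$ --- this is exactly the content of Propositions~\ref{estFPfrac} and~\ref{estFPnew}.
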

As for \eqref{hjb}, we suppose that $H(x,p)$ is $C^1(\T\times\R^d)$, convex in $p$ and has polynomial growth in the gradient entry, i.e.
\begin{equation}\label{H}\tag{H}
\begin{aligned}
& \text{there exist constants $\gamma > 1$ and $C_H>0$ such that}\\
& \qquad C_H^{-1}|p|^{\gamma}-C_H\leq H(x,p)\leq C_H(|p|^{\gamma}+1)\ ,\\
& \qquad D_pH(x,p)\cdot p-H(x,p)\geq C^{-1}_H|p|^{\gamma}-C_H\ , \\
& \qquad C^{-1}_H|p|^{\gamma-1}- {C}_H \le |D_{p}H(x,p)|\leq C_H|p|^{\gamma-1}+{C}_H \ ,
\end{aligned}
\end{equation}
for every $x\in \T$, $p\in\R^d$. Moreover, we suppose without loss of generality that $H \ge 0$. Recall that the Lagrangian $L : \T \times \R^d \to \R$, $L(x, \nu) := \sup_{p} \{p \cdot \nu - H(x, p)\}$, namely the Legendre transform of $H$ in the $p$-variable, is well defined by the superlinear behavior of $H(x, \cdot)$. Moreover, by convexity of $H(x, \cdot)$,
\[
H(x, p) = \sup_{\nu \in \R^d} \{\nu \cdot p - L(x, \nu)\},
\]
and 
\begin{equation}\label{HL}
H(x, p) = \nu \cdot p - L(x, \nu) \quad \text{if and only if} \quad \nu = D_p H(x, p).
\end{equation}
The following properties of $L$ are standard (see, e.g. \cite{CS}): for some $C_L > 0$,
\begin{align}
\tag{L1}\label{L1} & C_L^{-1}|\nu|^{\gamma'} - C_L \le L(x, \nu) \le C_L |\nu|^{\gamma'}\\
\end{align}
for all $\nu \in \R^d$.\\
 Concerning the case $\gamma \ge 2s$ and when dealing with H\"older regularity, we will impose some additional space regularity, i.e for $\alpha \in (0,1)$ to be determined,
\begin{equation}\label{Ha}\tag{$H_\alpha$}
H(x, p) - H(x+\xi, p)  \le C_H |\xi|^\alpha \big( |D_pH(x,p)|^{\gamma'}+1 \big)
\end{equation}
for all $x,\xi \in \T$ and $p \in \R^d$. A prototype example of $H$ satisfying \eqref{H} is
\[
H(x,p) = h(x)|p|^{\gamma} + b(x) \cdot p, \qquad 0 < h_0 \le h(x), \quad h, b \in C(\T).
\]
Note that whenever $h \in C^\alpha(\T)$, this model Hamiltonian satisfies also \eqref{Ha}.
Unless otherwise stated, in the next results we will always assume that (I) holds to have the full well-posedness of the adjoint problem. However, our approach via duality makes use only of the existence of positive solutions to \eqref{fp}.
Then, our first results for \eqref{hjb} concern sup-norm and integral estimate for strong solutions as in Definition \ref{strong} when $f\in L^q$, obtained via the nonlinear adjoint method via the strategy already implemented in \cite{CG2,CG5}.
\begin{thm}\label{supnorm}
Let \eqref{H} be in force and $\gamma>1$, $q>\frac{d+2s}{2s}$. Then, there exists a constant $C>0$ depending on $T,d,s,q,\|f\|_{L^q(Q_T)}, \|u_0\|_{C(\T)}$ such that any global weak solution to \eqref{hjb} satisfies
\[
\|u(\cdot,\tau)\|_{C(\T)}\leq C\text{ for all }\tau\in[0,\tau]\ .
\]
The estimate holds even for strong solutions to \eqref{hjb}.
\end{thm}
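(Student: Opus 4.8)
The plan is to prove the bound by estimating $u(\cdot,\tau)$ separately from above and from below, uniformly over $\tau\in[0,T]$, after reducing to the case of a smooth solution: replacing $f,u_0$ by mollifications and $H$ by a smooth Hamiltonian obeying \eqref{H} with uniform constants, one gets classical solutions $u_n$, and every estimate below is uniform in $n$ and stable under the convergence $u_n\to u$, so it suffices to work with smooth data and a smooth $u$.

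\textbf{Upper bound.} This is the easy half and uses only $H\ge 0$. From \eqref{hjb}, $\partial_t u+(-\Delta)^s u=f-H(x,Du)\le f$, so the comparison principle gives $u\le w$ on $Q_T$, where $w$ solves the linear fractional heat equation $\partial_t w+(-\Delta)^s w=f$, $w(0)=u_0$. Writing $w$ by Duhamel's formula and using $\|e^{-t(-\Delta)^s}\|_{L^1\to L^1}\le 1$ together with $\|e^{-t(-\Delta)^s}\|_{L^{q'}\to L^\infty}\lesssim t^{-d/(2sq)}$, a Hölder inequality in time gives $\|w(\tau)\|_{C(\T)}\lesssim \|u_0\|_{C(\T)}+\|f\|_{L^q(Q_T)}$, the time integral $\int_0^\tau (\tau-t)^{-dq'/(2sq)}\,dt$ being finite precisely because $q>\frac{d+2s}{2s}$. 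Hence $u(x,\tau)\le \|u_0\|_{C(\T)}+C\|f\|_{L^q(Q_T)}$ for every $(x,\tau)$.

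\textbf{Lower bound.} Here I would run Evans' nonlinear adjoint method as in \cite{CG2,CG5}. Fix $(x_0,\tau)\in\T\times(0,T]$ and let $\rho\ge 0$ be the solution of the adjoint Fokker--Planck equation \eqref{fp} on $Q_\tau$ with drift $b:=-D_pH(\cdot,Du)$ and terminal datum $\rho(\tau)=\mu_\eps$, a nonnegative smooth approximation of $\delta_{x_0}$ with $\|\mu_\eps\|_{L^1(\T)}=1$ (existence and nonnegativity being granted by (I), and classical for the regularized problems); integrating the equation in space yields $\|\rho(t)\|_{L^1(\T)}=1$ for all $t\in[0,\tau]$. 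Multiplying \eqref{hjb} by $\rho$, integrating on $Q_\tau$, using the self-adjointness of $(-\Delta)^s$ and the Fokker--Planck equation to cancel the diffusive terms, one integration by parts in $x$ on the transport term, and the Legendre identity $D_pH(x,Du)\cdot Du-H(x,Du)=L(x,-b)$, one is led — upon $\eps\to 0$ — to the representation
\[
u(x_0,\tau)=\int_{\T}u_0\,\rho(\cdot,0)\,dx+\iint_{Q_\tau}f\rho\,dxdt+\iint_{Q_\tau}L(x,-b)\,\rho\,dxdt.
\]
The first term is $\ge -\|u_0\|_{C(\T)}$. By \eqref{L1} the last term is $\ge C_L^{-1}\iint_{Q_\tau}|b|^{\gamma'}\rho\,dxdt-C_LT=:C_L^{-1}E-C_LT$, the coercive ``good'' term. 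For the middle term, Hölder gives $|\iint_{Q_\tau}f\rho|\le\|f\|_{L^q(Q_T)}\|\rho\|_{L^{q'}(Q_\tau)}$; then I invoke Theorem \ref{estFP2frac}(i) together with the parabolic Sobolev embedding $\H_{\sigma'}^{2s-1}(Q_\tau)\hookrightarrow L^{q'}(Q_\tau)$ — available for a suitable $\sigma'$ since $q>\frac{d+2s}{2s}$, i.e. $q'<\frac{d+2s}{d}$ — to get $\|\rho\|_{L^{q'}(Q_\tau)}\le C(\iint_{Q_\tau}|b|^{m'}\rho\,dxdt+1)$ with $m'=1+\frac{d+2s}{\sigma(2s-1)}$. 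Choosing $\sigma'$ moreover so small that $m'<\gamma'$, using $|b|\le C_H|Du|^{\gamma-1}+C_H$ and Jensen's inequality against the probability measure $\rho\,dxdt/\tau$, one obtains $\iint_{Q_\tau}|b|^{m'}\rho\le C(E^{m'/\gamma'}+1)$ with exponent $m'/\gamma'<1$. Substituting back, $u(x_0,\tau)\ge -C-C\|f\|_{L^q(Q_T)}(E^{m'/\gamma'}+1)+C_L^{-1}E-C_LT$, and since $m'/\gamma'<1$ Young's inequality absorbs the $f$-term into $C_L^{-1}E$, giving $u(x_0,\tau)\ge -C$ with $C=C(T,d,s,q,\|f\|_{L^q(Q_T)},\|u_0\|_{C(\T)})$.

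\textbf{Main obstacle.} The crux is the joint choice of $\sigma'$: it must be small enough for $m'<\gamma'$ to hold, yet large enough that $\H_{\sigma'}^{2s-1}(Q_\tau)\hookrightarrow L^{q'}(Q_\tau)$ still holds, all within the range $\sigma'<\frac{d+2s}{d+2s-1}$ required by Theorem \ref{estFP2frac}(i); reconciling these with $q>\frac{d+2s}{2s}$ is automatic when $\gamma<2s$ (there $\gamma'>\frac{2s}{2s-1}>m'$ for every admissible $\sigma'$), while for $\gamma\ge 2s$ one must push $\sigma'$ toward $1$ and carefully track how large $q$ must be relative to $\gamma,s,d$ for the embedding to survive — this balancing of exponents is where the real work lies. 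A secondary, routine point is to justify the duality/integration-by-parts step for a merely weak solution (carried out on the regularized problem, or by density) and the passage $\eps\to 0$ in the representation, which rests on $\|\rho(t)\|_{L^1}=1$ and on the $L^\infty$-control of $u$ away from $t=\tau$ already supplied by the upper estimate; the case of strong solutions as in Definition \ref{strong} is identical, such solutions having ample regularity for these manipulations.
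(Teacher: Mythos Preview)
Your proposal matches the paper's strategy for the lower bound: the nonlinear adjoint method via the representation formula \eqref{repr}, the Lagrangian coercivity \eqref{L1}, Theorem \ref{estFP2frac}(i) combined with the embedding $\H_{\sigma'}^{2s-1}\hookrightarrow L^{q'}$ to control $\|\rho\|_{L^{q'}}$ through the crossed quantity, and the absorption step $m'<\gamma'$ via Young's inequality. The ``obstacle'' you isolate is precisely what the paper handles by introducing an auxiliary exponent $\eta>\frac{d+2s}{(2s-1)\gamma'}$ and setting $\frac{1}{\sigma'}=\frac{1}{\eta'}+\frac{2s-1}{d+2s}$: for $\gamma\le 2s$ one may simply take $\eta=q$ (as you observe, then $m'<\frac{2s}{2s-1}\le\gamma'$ for every admissible $\sigma'$), while for $\gamma>2s$ the paper's proof, like yours, tacitly needs $q>\frac{d+2s}{(2s-1)\gamma'}$ so that one can choose $\eta\le q$ and control $\|f\|_{L^\eta}$ by $\|f\|_{L^q}$---a condition that is in any case in force in all subsequent results of the paper where $\gamma\ge 2s$ (cf.\ Theorems \ref{integest} and \ref{mainholder}).

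The only genuine difference is the upper bound. You invoke the comparison principle and Duhamel's formula with semigroup decay $\|e^{-t(-\Delta)^s}\|_{L^{q'}\to L^\infty}\lesssim t^{-d/(2sq)}$, while the paper argues by duality: it tests the weak formulation against the solution $\mu$ of the backward fractional heat equation $-\partial_t\mu+(-\Delta)^s\mu=0$ with smooth nonnegative terminal datum $\mu_\tau$ of unit $L^1$-mass, bounds $\|\mu\|_{L^{q'}(Q_\tau)}$ via Proposition \ref{estFPfrac} (with $b\equiv 0$) and Lemma \ref{embs}, uses $H\ge 0$ to drop the Hamiltonian term, and takes the supremum over $\mu_\tau$. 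Both routes hinge on $H\ge 0$ and $q>\frac{d+2s}{2s}$; yours is more direct but relies on the preliminary smoothing to justify comparison, whereas the paper's duality version applies immediately at the weak level and keeps the argument symmetric with the lower-bound machinery.
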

\begin{thm}\label{integest}
Let \eqref{H} be in force and $\gamma\in(1,2s)$. Let also $u\in\H_q^{2s}(Q_T)$ be a strong solution to \eqref{hjb} with $q>\frac{d+2s}{(2s-1)\gamma'}$. The following assertions hold:
\begin{itemize}
\item[(A)] There exists a constant $C_1>0$, depending on $T,d,s,C_H, \|f\|_{L^q(Q_\tau)},\|u_0\|_{L^p(\T)}$, $q=\frac{d+2s}{2s}$, such that any strong solution to \eqref{hjb} satisfies
\[
\|u\|_{L^\infty(0,\tau;L^p(\T))}\leq C_1
\]
for $1\leq p<\infty$.
\item[(B)] There exists a constant $C_2>0$, depending on $T,d,s,C_H \|f\|_{L^q (Q_\tau)}, \|u_0\|_{L^{p}(\T)}$, such that any strong solution to \eqref{hjb} satisfies
\[
\|u\|_{L^\infty(0,\tau;L^{p}(\T))}\leq C_2
\]
with $p=\frac{dq}{d+2s-2sq}$ if $q<\frac{d+2s}{2s}$.
\end{itemize}

\end{thm}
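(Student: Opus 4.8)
The plan is to run the nonlinear adjoint method of \cite{CG2,CG5}: I would test \eqref{hjb} against the non-negative solution of \eqref{fp} with drift $b=-D_pH(x,Du)$, combine the resulting duality identity with the Bessel bound of Theorem \ref{estFP2frac}, and close an absorption argument. I would argue a priori, assuming $u$ smooth enough for all integrations by parts to be licit; the general strong solution $u\in\mathcal{H}_q^{2s}(Q_T)$ then follows by approximation, once one checks (via the parabolic embeddings of Section \ref{sec;spaces}) that $b=-D_pH(x,Du)\sim|Du|^{\gamma-1}$ lies in the fractional Aronson--Serrin zone \eqref{fractionalAS}, so that (I) and Theorems \ref{estadjfrac}--\ref{estFP2frac} apply.

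Fix $\tau\in(0,T]$ and write $u=u^+-u^-$, so $\|u(\tau)\|_{L^p(\T)}^p=\|u^+(\tau)\|_{L^p(\T)}^p+\|u^-(\tau)\|_{L^p(\T)}^p$. First I would dispose of the positive part: since $H\ge0$, $u$ is a subsolution of the fractional heat equation with datum $u_0$ and source $f$, so $u\le v$ with $\partial_tv+(-\Delta)^sv=f$, $v(0)=u_0$, and maximal $L^q$-regularity (Lemma \ref{maximal}) together with the parabolic embedding yields $\|u^+(\tau)\|_{L^p(\T)}\le\|v(\tau)\|_{L^p(\T)}\le C(\|f\|_{L^q(Q_\tau)},\|u_0\|_{L^p(\T)})$ for $p$ as in the statement. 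For the negative part I would take $\rho\ge0$ solving \eqref{fp} on $Q_\tau$ with $\rho_\tau:=(u^-(\tau))^{p-1}$, so $\|\rho_\tau\|_{L^{p'}(\T)}=\|u^-(\tau)\|_{L^p(\T)}^{p-1}$; pairing \eqref{hjb} with $\rho$ and \eqref{fp} with $u$, using self-adjointness of $(-\Delta)^s$, an integration by parts on the divergence term and $b=-D_pH(x,Du)$, gives
\[
-\|u^-(\tau)\|_{L^p(\T)}^p=\int_\T u_0\,\rho(0)\,dx+\iint_{Q_\tau}f\rho\,dxdt+\iint_{Q_\tau}\big(D_pH(x,Du)\cdot Du-H(x,Du)\big)\rho\,dxdt\,.
\]
Using \eqref{H} (the last integrand is $\ge C_H^{-1}|Du|^\gamma-C_H$) and conservation of mass ($\iint_{Q_\tau}\rho=\tau\|\rho_\tau\|_{L^1}$), this rearranges to
\[
\|u^-(\tau)\|_{L^p(\T)}^p+C_H^{-1}\iint_{Q_\tau}|Du|^\gamma\rho\,dxdt\le\Big|\int_\T u_0\,\rho(0)\,dx\Big|+\Big|\iint_{Q_\tau}f\rho\,dxdt\Big|+C_HT\,\|\rho_\tau\|_{L^1(\T)}\,.
\]

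The two remaining terms I would estimate by duality. From $\rho\in L^\infty(0,\tau;L^{p'}(\T))$ with $\|\rho\|_{L^\infty_tL^{p'}_x}\lesssim\|\rho_\tau\|_{L^{p'}}$ (Theorem \ref{estadjfrac}) the datum term is $\lesssim\|u_0\|_{L^p}\|\rho_\tau\|_{L^{p'}}$. For the source term I would choose $\sigma'$ with $\tfrac1{\sigma'}=\tfrac1{q'}+\tfrac{2s-1}{d+2s}$, so that $\mathcal{H}_{\sigma'}^{2s-1}(Q_\tau)\hookrightarrow L^{q'}(Q_\tau)$; one checks that this is precisely the case $\sigma'=\tfrac{d+2s}{d+2s-1}$ of Theorem \ref{estFP2frac}(ii) when $q=\tfrac{d+2s}{2s}$ (any finite $p>1$ admissible, part (A)) and the case $\sigma'>\tfrac{d+2s}{d+2s-1}$ when $q<\tfrac{d+2s}{2s}$ (part (B)), with $p=\tfrac{d\sigma}{d+2s-\sigma}=\tfrac{dq}{d+2s-2sq}$ and $m'=1+\tfrac{d+2s}{\sigma(2s-1)}=\tfrac{d+2s}{q(2s-1)}$. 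Hence $\|\rho\|_{L^{q'}(Q_\tau)}\lesssim\iint_{Q_\tau}|b|^{m'}\rho+\|\rho_\tau\|_{L^{p'}}$, and the hypothesis $q>\tfrac{d+2s}{(2s-1)\gamma'}$ is exactly $m'<\gamma'$, i.e. $(\gamma-1)m'<\gamma$; so by \eqref{H} and Young $\iint_{Q_\tau}|b|^{m'}\rho\lesssim\iint_{Q_\tau}(|Du|^{(\gamma-1)m'}+1)\rho\le\eps\iint_{Q_\tau}|Du|^\gamma\rho+C_\eps\|\rho_\tau\|_{L^1}$. Inserting this, absorbing the $\eps$-term into $C_H^{-1}\iint|Du|^\gamma\rho$ for $\eps$ small (depending on $C_H,\|f\|_{L^q}$), and using $\|\rho_\tau\|_{L^1}\lesssim\|u^-(\tau)\|_{L^p}^{p-1}=\|\rho_\tau\|_{L^{p'}}$, I would be left with $\|u^-(\tau)\|_{L^p}^p\le C(1+\|f\|_{L^q(Q_\tau)}+\|u_0\|_{L^p(\T)})\|u^-(\tau)\|_{L^p}^{p-1}$, i.e. $\|u^-(\tau)\|_{L^p}\le C$. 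Combining with the bound on $u^+(\tau)$ and taking the supremum over $\tau$ proves (A) and (B); note $\gamma<2s$ is used only to ensure $m'=\tfrac{2s}{2s-1}<\gamma'$ in (A).

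The hard part will be the integrability bookkeeping that keeps every step legitimate near the threshold: one must know a priori that $\iint_{Q_\tau}|Du|^\gamma\rho<\infty$ (otherwise the absorption is vacuous) and that $b=-D_pH(x,Du)$ really sits in the range \eqref{fractionalAS} (so that $\rho$ exists, is non-negative and obeys Theorems \ref{estadjfrac}--\ref{estFP2frac}), which is why the bound should first be proved for smooth solutions and then transferred by approximation. A secondary delicate point is that $p=\tfrac{dq}{d+2s-2sq}$ is an endpoint exponent for the parabolic embedding controlling $u^+$ through the linear comparison, possibly forcing one to lose an $\eps$ of Besov smoothness there before closing the estimate.
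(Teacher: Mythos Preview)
Your overall plan is the right one and matches the paper's adjoint-method strategy, but there is one genuine gap and one smaller inaccuracy.

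\medskip
\textbf{The main gap: the bound on $\|\rho(0)\|_{L^{p'}}$.} You invoke Theorem~\ref{estadjfrac} to get $\|\rho\|_{L^\infty_tL^{p'}_x}\lesssim\|\rho_\tau\|_{L^{p'}}$. That bound is true, but look at Step~2 of its proof: the Duhamel argument produces a constant that depends on $\|b\|_{L^{\sQ}(L^{\sP})}$, i.e.\ on $\|D_pH(x,Du)\|$, hence on the very solution $u$ you are trying to estimate. Plugging this into your absorption inequality gives a bound of the form $\|u^-(\tau)\|_{L^p}\le C(\|Du\|)$, which is vacuous as an a~priori estimate with the stated dependencies. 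The fix---and this is exactly what the paper does via Corollary~\ref{corembW} (equivalently Theorem~\ref{estFP2frac}(ii) combined with the trace embedding in Remark~\ref{embdel})---is to bound $\|\rho(0)\|_{L^{p'}}$ \emph{also} by the crossed quantity:
\[
\|\rho(0)\|_{L^{p'}(\T)}\;\lesssim\;\iint_{Q_\tau}|b|^{m'}\rho\,dxdt+\|\rho_\tau\|_{L^{p'}(\T)}\,,\qquad m'=\frac{d+2s}{(2s-1)q}\,,
\]
and then absorb $\iint|b|^{m'}\rho\le \eps\iint|Du|^\gamma\rho+C_\eps\|\rho_\tau\|_{L^1}$ exactly as you do for the $f$-term. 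Once you make this change your closing computation goes through unchanged.

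\medskip
\textbf{A smaller issue: the $u^+$ bound.} Lemma~\ref{maximal} gives $\|v\|_{\H_q^{2s}}\le C(\|f\|_{L^q}+\|u_0\|_{W^{2s-2s/q,q}})$, so your linear comparison yields a constant depending on $\|u_0\|_{W^{2s-2s/q,q}}$, not on $\|u_0\|_{L^p}$ as you claim. The paper avoids this by testing the equation for $u^+$ (Kato's inequality) against the \emph{driftless} backward heat solution $\mu$ with normalized terminal datum $\mu_\tau\propto[T_k(u^+(\tau))]^{p-1}$; then $\|\mu(0)\|_{L^{p'}}\le\|\mu_\tau\|_{L^{p'}}\le 1$ by the semigroup contraction, and $\|\mu\|_{L^{q'}(Q_\tau)}\le C$ by Corollary~\ref{corembW} with $b\equiv 0$, which uses only $\|u_0^+\|_{L^p}$. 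Your comparison argument can be salvaged by replacing maximal regularity with the Duhamel representation $v(t)=\mathcal{T}_tu_0+\int_0^t\mathcal{T}_{t-r}f(r)\,dr$ and the $L^p$-contraction of $\mathcal{T}_t$, but as written it overshoots the stated dependence.

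\medskip
\textbf{Minor differences from the paper.} Your pairing of the full equation for $u$ with $\rho$ (using $\int u(\tau)(u^-(\tau))^{p-1}=-\|u^-(\tau)\|_{L^p}^p$) is a clean alternative to the paper's route through parabolic Kato's inequality and the restricted drift $D_pH(x,Du)\chi_{\{u<0\}}$; both lead to the same coercive term $\iint|Du|^\gamma\rho$. The paper also normalizes and truncates the terminal datum ($\mu_\tau=[T_k(u^\pm(\tau))]^{p-1}/\|u^\pm(\tau)\|_{L^p}^{p-1}$) so that $\rho_\tau\in L^\infty$ and the adjoint problem sits squarely in the energy framework; you should do the same rather than rely on a smoothing/approximation step.
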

Owing to a similar approach and following the scheme of \cite{CG5}, spatial H\"older's regularity estimates for weak energy solutions to fractional Hamilton-Jacobi equations with unbounded right-hand side are provided. 
\begin{thm}\label{mainholder}
Assume \eqref{H}, $(H_\alpha)$ and $\gamma\geq2s$.

\begin{itemize} 
\item[(C)]  Let $f\in L^q(Q_T)$ with $q>\frac{d+2s}{(2s-1)\gamma'}$. Let $u$ be a local weak solution to \eqref{hjb}. Then, there exists $C_1>0$ depending on  $t_1,C_H$, $\|u\|_{C(\overline{Q}_T)}$, $\|f\|_{L^q(Q_T)}$, $q,d,T,s$ such that
\[
\mathrm{sup}_{t\in(t_1,T)}[u(\cdot,t)]_{C^\alpha(\T)}\leq C_1
\]
with $\alpha,q,\gamma$ as follows:\\
\begin{itemize}
\item For $\gamma=2s$ we get $\alpha= 2s-\frac{d+2s}{q}$ such that $\alpha\in(0,2s-1)$ when $q<d+2s$, $\alpha<1$ when $d+2s\leq q<\frac{d+2s}{2s-1}$, while we have any $\alpha\in(0,1)$ when $q\geq\frac{d+2s}{2s-1}$;
\item For $2s\leq \gamma<\frac{1}{2-2s}$ (i.e. $\gamma'(2s-1)>1$), we get $\alpha= \gamma'(2s-1)-\frac{d+2s}{q}$ such that $\alpha\in(0,2s-1)$ if $q<\frac{d+2s}{(2s-1)(\gamma'-1)}$, $\alpha\in(2s-1,1)$ if $\frac{d+2s}{(2s-1)(\gamma'-1)}\leq q<\frac{(d+2s)(\gamma-1)}{1-(2-2s)\gamma}$, while we have any $\alpha\in (0,1)$ if $q\geq \frac{(d+2s)(\gamma-1)}{1-(2-2s)\gamma}$;
\item For $\gamma\geq\frac{1}{2-2s}$ (i.e. $\gamma'(2s-1)\leq1$), we get $\alpha=\gamma'(2s-1)-\frac{d+2s}{q}\in(0,1)$ whenever $q>\frac{d+2s}{\gamma'(2s-1)}$.
\end{itemize}
\item[(D)] If $u$ is a strong solution in $\H_q^{2s}(Q_T)$ with $q>\frac{d+2s}{(2s-1)\gamma'}$ to \eqref{hjb}, then $u\in L^\infty(0,T;C^\alpha(\T))$. In particular, there exists a positive constant $C_2$ depending on $C_H$, $\|u_0\|_{C^\alpha(\T)}$, $\|f\|_{L^q(Q_T)}$, $q,d,T,s$ such that
\[
\mathrm{sup}_{t\in(0,T)}[u(\cdot,t)]_{C^\alpha(\T)}\leq C_2\ .
\]
with $\alpha,\gamma$ as in (C).
\end{itemize}
\end{thm}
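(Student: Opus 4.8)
The plan is to run the nonlinear adjoint method of Evans, exactly as in the viscous case \cite{CG5,CG2}, now built on the fractional Fokker--Planck estimates of Theorems \ref{estadjfrac} and \ref{estFP2frac}. Since $\T$ is translation invariant, to bound $\sup_t[u(\cdot,t)]_{C^\alpha(\T)}$ it suffices to bound $u(x_0,t_0)-u(y_0,t_0)$ for all $x_0,y_0\in\T$ and $t_0$ in the relevant range. For each $z\in\T$ I would introduce the adjoint state $\rho^z\ge0$, the (unique, non-negative by assumption (I)) weak solution of \eqref{fp} on $\T\times(t_1,t_0)$ (resp. on $\T\times(0,t_0)$ in case (D)) with drift $b=-D_pH(x,Du)$ and terminal datum a mollification of $\delta_z$. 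Pairing \eqref{hjb} against $\rho^z$, \eqref{fp} against $u$, and using the self-adjointness of $(-\Delta)^s$ yields the representation
\[
u(z,t_0)=\int_\T u(\cdot,t_1)\,\rho^z(\cdot,t_1)\,dx+\iint_{\T\times(t_1,t_0)}\big(f+D_pH(x,Du)\cdot Du-H(x,Du)\big)\rho^z\,dx\,dt
\]
(up to the mollification, which one removes at the end).

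The first substantive step is an \emph{a priori energy bound}. Using the structural inequality $D_pH(x,p)\cdot p-H(x,p)\ge C_H^{-1}|p|^\gamma-C_H$ from \eqref{H} on the right-hand side above, the sup-norm estimate of Theorem \ref{supnorm} on the left-hand side (recall $\|\rho^z(\cdot,t_1)\|_{L^1}=\|\rho^z_{t_0}\|_{L^1}=1$), and Young's inequality on $\iint|f|\rho^z\le\|f\|_{L^q}\|\rho^z\|_{L^{q'}}$, one obtains $\iint|Du|^\gamma\rho^z\le C$ uniformly in $z$. Because $(\gamma-1)\gamma'=\gamma$, this controls the drift in the crossed quantity \eqref{cross}, namely $\iint|b|^{\gamma'}\rho^z\lesssim\iint|Du|^\gamma\rho^z+1\le C$, so Theorem \ref{estFP2frac} applies with $m'=\gamma'$; this \emph{forces} $\sigma=\frac{d+2s}{(2s-1)(\gamma'-1)}$ and $\sigma'=\sigma/(\sigma-1)$, and whether this $\sigma'$ falls into case (i), the endpoint of case (ii), or the genuine case (ii) of Theorem \ref{estFP2frac} is exactly what produces the three alternatives for $(\alpha,q,\gamma)$ in the statement. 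One then gets a uniform Bessel bound $\|\rho^z\|_{\mathcal H^{2s-1}_{\sigma'}(\T\times(t_1,t_0))}\le C$, and closes the loop on the $\iint|f|\rho^z$ term through the embedding $\mathcal H^{2s-1}_{\sigma'}\hookrightarrow L^{q'}(Q)$ from Section \ref{sec;spaces}, the relation between $q'$ and $(\sigma',2s-1)$ being precisely the one recorded in the identity $\alpha=\gamma'(2s-1)-\frac{d+2s}{q}$.

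The second step is the \emph{dualization}. Subtracting the representations at $z=x_0$ and $z=y_0$, the difference $w:=\rho^{x_0}-\rho^{y_0}$ solves the homogeneous linear nonlocal equation \eqref{fp} with drift $b$ and terminal datum (mollified) $\delta_{x_0}-\delta_{y_0}$, and
\[
u(x_0,t_0)-u(y_0,t_0)=\int_\T u(\cdot,t_1)\,w(\cdot,t_1)\,dx+\iint\big(f+D_pH(x,Du)\cdot Du-H(x,Du)\big)w\,dx\,dt.
\]
Here one bounds $|\iint fw|\le\|f\|_{L^q}\|w\|_{L^{q'}}$, uses \eqref{H} to get $|\iint(D_pH\cdot Du-H)w|\lesssim\iint(|Du|^\gamma+1)|w|$, and treats the initial-time term by sending $t_1\to0$ and invoking $u_0\in C^\alpha$ in case (D) (so that it is $\le[u_0]_{C^\alpha}|x_0-y_0|^\alpha$), while in case (C) one keeps $t_1>0$ fixed and bounds it by $\|u\|_{C(\overline Q_T)}\|w(\cdot,t_1)\|_{L^1}$ (this is why the constant in (C) degenerates as $t_1\to0$). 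The whole estimate reduces to showing $\|w\|_{L^{q'}(Q)}$, $\iint(|Du|^\gamma+1)|w|$ and $\|w(\cdot,t_1)\|_{L^1}$ are all $\lesssim|x_0-y_0|^\alpha$: this follows by interpolating the relevant norms of $w$ between the \virg{bounded} side, governed by the uniform crossed bound $\iint|b|^{\gamma'}(\rho^{x_0}+\rho^{y_0})\le C$ together with the Bessel estimate of Theorem \ref{estFP2frac}, and the \virg{small} side, governed by $\|\delta_{x_0}-\delta_{y_0}\|_{Y}\lesssim|x_0-y_0|$ in a suitable negative-regularity space $Y$ (the signed measure $\delta_{x_0}-\delta_{y_0}$ being one derivative worse than a Dirac mass), combined with linear maximal regularity for nonlocal equations in divergence form; this yields the gain $|x_0-y_0|^\alpha$ with $\alpha=\gamma'(2s-1)-\frac{d+2s}{q}$ as above. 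Reversing $x_0\leftrightarrow y_0$ and taking suprema over $x_0,y_0$ and over $t_0\in(t_1,T)$ (resp. $(0,T)$) gives (C) and (D).

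The hard part, I expect, will be making rigorous the quantitative, \emph{Hölder} dependence of the Fokker--Planck solution $w$ on the perturbation $\delta_{x_0}-\delta_{y_0}$ in the sharp range of $\alpha$ up to — but not including — $1$; the naive fractional Sobolev embedding of $\mathcal H^{2s-1}_{\sigma'}$ only reaches $\alpha<2s-1-d(\tfrac1{\sigma'}-\tfrac1{q'})$ (the first regime $\alpha\in(0,2s-1)$), and pushing into $\alpha\in(2s-1,1)$ forces one to exploit the parabolic time-regularity encoded in $\mathcal H^{2s-1}_{\sigma'}$, which is exactly why the statement breaks into sub-cases and why the restriction $\gamma<\tfrac1{2-2s}$ (equivalently $\gamma'(2s-1)>1$) appears — beyond it $\gamma'(2s-1)\le1$ makes $\alpha<1$ automatic and the supercritical obstructions disappear. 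A subsidiary but essential point is the a posteriori check that $b=-D_pH(x,Du)$, which by \eqref{H} satisfies $|b|\lesssim|Du|^{\gamma-1}+1$, lies in a class for which assumption (I) is legitimate, so that $\rho^z\ge0$ and the above identities are licit and the scheme is a genuine bootstrap rather than circular; this is controlled jointly with Theorems \ref{supnorm} and \ref{estFP2frac}, and is discussed in Appendix \ref{eqdrift}.
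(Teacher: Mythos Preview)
Your overall strategy---the adjoint method, the crossed energy bound $\iint|D_pH|^{\gamma'}\rho\le C$, and the Bessel regularity of $\rho$ in $\H^{2s-1}_{\sigma'}$---matches the paper up to the dualization step. There, however, your approach diverges and contains a genuine gap.

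The paper does \emph{not} compare two adjoint solutions $\rho^{x_0},\rho^{y_0}$ with distinct terminal Diracs. It uses a \emph{single} adjoint $\rho$ (with arbitrary smooth terminal density $\rho_\tau$) and its spatial translate $\hat\rho(x,t):=\rho(x-h\xi,t)$, which solves the adjoint equation with \emph{shifted} drift $D_pH(x-h\xi,Du(x-h\xi,t))$. This shifted drift is then used as the \emph{suboptimal} choice $\Theta$ in the weak inequality coming from convexity of $H$. After subtraction of the optimal identity and the change of variables $x\mapsto x+h\xi$, the Lagrangian contribution collapses to
\[
\iint \big(L(x+h\xi,D_pH(x,Du))-L(x,D_pH(x,Du))\big)\eta\rho\,dxdt \le C_H|h|^\alpha \iint\big(|D_pH|^{\gamma'}+1\big)\rho,
\]
by $(H_\alpha)$ and the crossed bound. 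This is the only place $(H_\alpha)$ is used, and you never invoke it---a warning sign. The remaining terms $\iint f(\hat\rho-\rho)$ and $\iint u\eta'(\hat\rho-\rho)$ are handled via the Nikol'skii-type embedding $\H^{2s-1}_{\sigma'}\hookrightarrow L^{q'}(\omega,\tau;N^{\alpha,q'}(\T))$ (Lemma \ref{embs}-(ii) combined with Lemma \ref{embnik}), which gives $\|\rho(\cdot-h\xi)-\rho\|_{L^{q'}}\le|h|^\alpha\|\rho\|_{\H^{2s-1}_{\sigma'}}$ directly; the value $\alpha=\gamma'(2s-1)-\frac{d+2s}{q}$ comes from matching $\delta=q'$ and $p=\sigma'=\frac{d+2s}{d+4s-1-\gamma'(2s-1)}$ in that embedding.

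In your scheme, by contrast, both $\rho^{x_0}$ and $\rho^{y_0}$ carry the \emph{same} drift, so the Lagrangian term in the difference is $\iint L(x,D_pH(x,Du))\,w$ with $w=\rho^{x_0}-\rho^{y_0}$. Bounding this by $C|x_0-y_0|^\alpha$ would require either independent $L^p$ control on $|Du|^\gamma$ (not available for local weak solutions) or quantitative smallness of $w$ in $L^1(|Du|^\gamma\,dxdt)$. The crossed bound only yields $\iint|Du|^\gamma|w|\le\iint|Du|^\gamma(\rho^{x_0}+\rho^{y_0})\le C$, merely bounded; your interpolation cannot close because the weight $|Du|^\gamma$ has no higher integrability against $\rho$. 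The claim $\|w(\cdot,t_1)\|_{L^1}\lesssim|x_0-y_0|^\alpha$ faces the same obstruction---it is a Wasserstein-type statement for a Fokker--Planck flow whose drift is only in $L^{\gamma'}(\rho\,dxdt)$, and is nowhere established. Finally, Theorem \ref{estFP2frac} is proved for non-negative $\rho$ (the estimate goes through $\iint|b|^{m'}\rho$), so applying it to the signed $w$ is not licit either.
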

We remark that
\[
\alpha= \gamma'(2s-1)-\frac{d+2s}{q}\to \alpha=\gamma'-\frac{d+2}{q}\text{ and } q=\frac{(d+2s)(\gamma-1)}{1-(2-2s)\gamma}\to  \frac{d+2}{\gamma'-1}\text{ as }s\to 1^-\ ,
\]
so that we find the same thresholds in \cite{CG5}.\\
In the last part of the paper, we address optimal $L^q$-regularity in the form of a priori estimates for strong solutions, that is stated in the next
\begin{thm}\label{maxregmain}
Assume \eqref{H}, $(H_\alpha)$. Let $u\in\H_q^{2s}(Q_T)$ be a strong solution to \eqref{hjb} with $q>\frac{d+2s}{(2s-1)\gamma'}$, and assume that there exists $\tilde K_1>0$ such that
\[
\|f\|_{L^q(Q_T)}+\|u_0\|_{W^{2s-\frac{2s}{q},q}(\T)}\leq \tilde K_1\ .
\]
If
\[
q>\begin{cases}
\frac{(d+2s)(\gamma-1)}{(2s-1)\gamma}&\text{ if }\gamma\in(\frac{d+6s-2}{d+2s},2s)\\
\frac{d+2s}{2s}&\text{ if }\gamma=2s\\
\frac{(d+2s)(\gamma-1)}{(2s-2)\gamma+2s}&\text{ if }\gamma\in\left(2s,\frac{s}{1-s}\right)\ ,
\end{cases}
\]
then there exists a constant $C_1>0$ depending on $\tilde K_1,q,d,T,s,C_H$ such that
\[
\|\partial_tu\|_{L^q(Q_T)}+\|u\|_{L^q(0,T;H_q^{2s}(\T))}+\||Du|^\gamma\|_{L^{q}(Q_T))}\leq C_1\ .
\]

\end{thm}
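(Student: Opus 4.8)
The plan is to read \eqref{hjb} as a semilinear perturbation of the fractional heat equation and to close the a priori estimate by an absorption argument, after interpolating the nonlinear gradient term between the top-order parabolic norm and a lower-order norm that has already been estimated in terms of the data. Concretely, I would first freeze the nonlinearity: setting $g:=f-H(x,Du)$ and using \eqref{H} to bound $\|g\|_{L^q(Q_T)}\le\|f\|_{L^q(Q_T)}+C_H\||Du|^\gamma\|_{L^q(Q_T)}+C$, I would apply the maximal $L^q$-regularity estimate for $\partial_t+(-\Delta)^s$ (Lemma \ref{maximal}) to the linear problem $\partial_t u+(-\Delta)^s u=g$, $u(0)=u_0$, obtaining
\[
\|\partial_t u\|_{L^q(Q_T)}+\|u\|_{L^q(0,T;H_q^{2s}(\T))}\le C\Big(\||Du|^\gamma\|_{L^q(Q_T)}+\|f\|_{L^q(Q_T)}+\|u_0\|_{W^{2s-\frac{2s}{q},q}(\T)}+1\Big).
\]
Denoting the left-hand side by $X$ (finite since $u\in\H_q^{2s}(Q_T)$, with the last two data terms $\le\tilde K_1+1$), everything reduces to controlling $\||Du|^\gamma\|_{L^q(Q_T)}=\|Du\|_{L^{\gamma q}(Q_T)}^\gamma$ by a small multiple of $X$ plus data.

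For this I would invoke the fractional Gagliardo–Nirenberg inequalities of Lemmas \ref{GN1} and \ref{GN2}. The standing assumption $q>\frac{d+2s}{(2s-1)\gamma'}$ is exactly the threshold under which the parabolic Sobolev embedding of $\H_q^{2s}(Q_T)$ puts $Du$ in $L^{\gamma q}(Q_T)$, so the interpolation is legitimate (if needed, first for a mollification of $u$ and then by density). For $\gamma\in(\frac{d+6s-2}{d+2s},2s)$ I would use Lemma \ref{GN1} in the form $\|Du\|_{L^{\gamma q}(Q_T)}\lesssim\|u\|_{L^q(0,T;H_q^{2s}(\T))}^{\theta}\|u\|_{L^\infty(0,T;L^r(\T))}^{1-\theta}+\|u\|_{L^\infty(0,T;L^r(\T))}$ for an admissible pair $(r,\theta)$ with $\theta\gamma<1$; for $\gamma\in[2s,\tfrac{s}{1-s})$ I would use Lemma \ref{GN2} in the form $\|Du\|_{L^{\gamma q}(Q_T)}\lesssim\|u\|_{L^q(0,T;H_q^{2s}(\T))}^{\theta}\|u\|_{L^\infty(0,T;C^\alpha(\T))}^{1-\theta}+\|u\|_{L^\infty(0,T;C^\alpha(\T))}$ with $\theta<1/\gamma$ and $\alpha=\gamma'(2s-1)-\tfrac{d+2s}{q}$ as in Theorem \ref{mainholder}.

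The lower-order (``slack'') factor I would control purely in terms of the data via the estimates already established: $\|u\|_{L^\infty(0,T;L^r(\T))}$ by Theorem \ref{supnorm} (with $r=\infty$, valid for $q>\tfrac{d+2s}{2s}$) or by Theorem \ref{integest} (with the critical exponent $r=\tfrac{dq}{d+2s-2sq}$, respectively any finite $r$ at the endpoint, when $q\le\tfrac{d+2s}{2s}$), and $\|u\|_{L^\infty(0,T;C^\alpha(\T))}$ by Theorem \ref{mainholder}(D), whose admissible range of $q$ coincides with the standing assumption. Writing $K=K(\tilde K_1,q,d,T,s,C_H)$ for this bound, the three displays combine to $X\le C\big(X^{\theta\gamma}K^{\gamma(1-\theta)}+K^\gamma+\tilde K_1+1\big)$, and since $\theta\gamma<1$ Young's inequality gives $CX^{\theta\gamma}K^{\gamma(1-\theta)}\le\tfrac12 X+C'K^{\frac{\gamma(1-\theta)}{1-\theta\gamma}}$, which I absorb into the left-hand side to conclude $X\le C_1$; feeding this back into the Gagliardo–Nirenberg bound controls $\||Du|^\gamma\|_{L^q(Q_T)}$ as well, which is the assertion.

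I expect the main obstacle to be the exponent bookkeeping in the interpolation step: for each of the three $\gamma$-regimes one has to exhibit admissible Gagliardo–Nirenberg parameters whose scaling relation forces $\theta\gamma<1$ \emph{precisely} under the stated lower bounds on $q$, and to check that the slack norm so produced is one of those actually controlled by Theorems \ref{supnorm}, \ref{integest}, \ref{mainholder} (in particular matching the critical embedding exponent $\tfrac{dq}{d+2s-2sq}$ of $\H_q^{2s}(Q_T)$ when $\gamma<2s$, and the explicit Hölder exponent $\alpha=\gamma'(2s-1)-\tfrac{d+2s}{q}$ when $\gamma\ge 2s$). The restriction $\gamma<\tfrac{s}{1-s}$ (equivalently $(2s-2)\gamma+2s>0$) is needed to keep the threshold $\tfrac{(d+2s)(\gamma-1)}{(2s-2)\gamma+2s}$ finite and positive and to guarantee $\alpha<1$, while $\gamma>\tfrac{d+6s-2}{d+2s}$ is what makes the subcritical-growth inequality usable with $\theta\gamma<1$; the degradation to only partial estimates when $\gamma>2s$ is exactly the gap between these refined thresholds and the standing assumption $q>\tfrac{d+2s}{(2s-1)\gamma'}$.
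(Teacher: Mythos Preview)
Your proposal is correct and follows essentially the same route as the paper: freeze the nonlinearity, apply maximal $L^q$-regularity for the fractional heat operator (Lemma \ref{maximal}), interpolate $\|Du\|_{L^{\gamma q}}$ via Lemma \ref{GN1} (when $\gamma<2s$) or Lemma \ref{GN2} (when $\gamma\ge 2s$), bound the slack factor by Theorems \ref{supnorm}/\ref{integest} or Theorem \ref{mainholder}(D) respectively, and absorb via Young's inequality using $\theta\gamma<1$. The paper carries out exactly the exponent bookkeeping you flag as the main obstacle---in particular choosing $z$ close to $\tfrac{d(\gamma-1)}{2s-\gamma}$ in the subcritical case and $\beta=\tfrac{1-\alpha}{2s-\alpha}$ in the supercritical case---so your sketch matches the actual argument.
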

Note that
\[
\frac{(d+2s)(\gamma-1)}{(2s-2)\gamma+2s}\to \frac{(d+2)(\gamma-1)}{2}\text{ and }\frac{1}{1-s}\to \infty\text{ as }s\to 1^-
\]
so that we recover the same thresholds in \cite{CG5}. \\
We finally point out that our results, and in particular the H\"older bounds, apply to the so-called fractional Kardar-Parisi-Zhang equations, see \cite{KW,PPS}, of the form
\[
\partial_t z+(-\Delta)^s z=G(x,Dz(x,t))-f(x,t)\text{ in }Q_T
\]
where $G$ satisfies \eqref{H}. In other terms, the sign in front of $H$ is not important since $u=-z$ solves \eqref{hjb} with $H(x,p)=G(x,-p)$.

\section{Functional spaces}\label{sec;spaces}
\subsection{Stationary spaces: definitions and useful results}
We denote by $L^p(\T)$ the space of all measurable and periodic functions belonging to $L^p_{loc}(\R^d)$ endowed with the norm $\|\cdot\|_p=\|\cdot\|_{L^p((0,1)^d)}$. Let $k$ be a nonnegative integer. We denote by $W^{k,p}(\T)$ the space of $L^p(\T)$ functions with distributional derivatives in $L^p(\T)$ up to order $k$. For $\mu\in\R$ and $p\in(1,\infty)$, the space of Bessel potentials $H_p^{\mu}(\T)$ comprises those distributions verifying the integrability condition $(I-\Delta)^{\frac{\mu}{2}}u\in L^p(\T)$, where $(I-\Delta)^{\frac{\mu}{2}}$ is the operator defined in terms of Fourier series as
\[
(I-\Delta)^{\frac{\mu}{2}}u=\sum_{k\in\Z^d}(1+4\pi^2k^2)^\frac{\mu}{2}\hat u(k)e^{2\pi i k\cdot x}\ ,
\]
with
\[
\hat u(k)=\int_\T u(x)e^{-2\pi i k\cdot x}\ .
\]
We denote the norm in $H_p^{\mu}(\T)$ as
\[
\|u\|_{\mu,p}:=\|(I-\Delta)^{\frac{\mu}{2}}u\|_p\simeq \|u\|_p+\|(-\Delta)^{\frac{\mu}{2}}u\|_p\ .
\]
The proof of the latter equivalence is given in \cite[Remark 2.3]{CG1}. Let us also remark that when $\mu=k$ is a nonnegative integer, $W^{k,p}$ is isomorphic to $H_p^k$, see e.g. \cite[Remark 2.3]{CG1}. Moreover, it can be seen that the operator $(I-\Delta)^{\frac{\mu}{2}}$ maps isometrically $H_p^{\eta+\mu}$ in $H_p^\eta$ for any $\eta,\mu\in\R$, see again  \cite[Remark 2.3]{CG1} for the proof. We further recall that another characterization of spaces of Bessel potentials can be given via complex interpolation methods, namely
\[
H_p^\mu(\T)\simeq [L^p(\T),W^{k,p}(\T)]_{\theta}\ ,\mu=k\theta\ ,
\]
where $[X,Y]_\theta$ stands for the complex interpolation space among the Banach spaces $(X,Y)$, see e.g. \cite{Lunardi,BL} for a complete account.\\
Let now $\mu\in(0,1)$ and $1\leq p,q\leq \infty$. The Besov space $B_{pq}^\mu(\T)$ consists of all functions $u\in L^p(\T)$ such that
the norm
\[
\|u\|_{B_{pq}^\mu(\T)}:=\|u\|_{L^p(\T)}+\left(\int_{\T}\frac{\|f(x+h)-f(x)\|^q_{L^p(\T)}}{|h|^{d+\mu q}}\,dh\right)^{\frac{1}{q}}
\]
is finite. When $p=q=\infty$ and $\mu\in(0,1)$ we have $B_{\infty\infty}^\mu(\T)\simeq C^\mu(\T)$ (cf \cite[Section 3.5.4 p. 168-169]{ST}), i.e. the classical H\"older space, which is endowed with the equivalent norm
\[
\|u\|_{C^\alpha(\T)}:=\|u\|_{\infty}+\sup_{x\neq y\in\T}\frac{|u(x)-u(y)|}{\mathrm{dist}(x,y)^\alpha}\ ,
\]
where $\mathrm{dist}(x,y)$ is the geodesic distance among $x,y\in\T$. When $p=q$ and $\mu$ is not an integer, one has $B_{pp}^\mu(\T)\simeq W^{\mu,p}(\T)$, where $W^{\mu,p}(\T)$ is the classical Sobolev-Slobodeckii scale in the periodic setting, see \cite[p.13]{Lunardi}. When $q=\infty$ and $p$ is finite, the space $B_{p\infty}^\mu(\T)\simeq N^{\mu,p}(\T)$ is known as Nikol'skii space \cite{Nik} and the aforementioned norm is interpreted in the usual sense via
\[
\|u\|_{N^{\mu,p}(\T)}:=\|u\|_{L^p(\T)}+\sup_{|h|>0}|h|^{-\mu}\|u(x+h)-u(x)\|_{L^p(\T)}\ ,
\]
see \cite[Chapter 17]{Leoni} for the whole space case and \cite[p. 460]{Taibleson2}, \cite[Section 3.5.4]{ST} for the definition in the periodic case.
Yet another characterization of Besov classes can be given by means of real interpolation methods. For $m\in\N$, $p,q\in[1,\infty]$ and $\theta\in(0,1)$ we have
\[
(L^p(\T),W^{m,p}(\T))_{\theta,q}\simeq B^{\theta m}_{pq}(\T)\ ,
\]
where $(X,Y)_{\theta,q}$ stands for the real interpolation space of the interpolation couple of Banach spaces $(X,Y)$, with equivalence of the respective norms, see e.g. \cite[Example 1.10]{Lunardi}, \cite[Theorem 17.24]{Leoni}. We also denote by $F^\mu_{pq}(\T)$ the periodic Triebel-Lizorkin scale, and refer to \cite[Section 3.5.2]{ST} for its definition.

We recall some standard embeddings we will use in the sequel among the aforementioned spaces.
\begin{lemma}\label{inclstat}
\begin{itemize}
  \item[(i)] Let $\nu,\mu\in\R$ with $\nu\leq\mu$, then $H_p^{\mu}(\T)\hookrightarrow H_p^{\nu}(\T)$.
  \item[(ii)] If $p\mu>d$ and $\mu-d/p$ is not an integer, then $H_p^{\mu}(\T)\hookrightarrow C^{\mu-d/p}(\T)$.
  \item[(iii)] Let $\nu,\mu\in\R$ with $\nu\leq\mu$, $p,q\in(1,\infty)$ and
  \[
  \mu-\frac{d}{p}=\nu-\frac{d}{q}\ ,
  \]
  then $H_p^{\mu}(\T)\hookrightarrow H_q^{\nu}(\T)$.
   \item[(iv)] If $p\mu=d$,  then $H_p^{\mu}(\T)\hookrightarrow L^q(\T)$ for all $1\leq q<\infty$.
\end{itemize}
\end{lemma}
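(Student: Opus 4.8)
All four inclusions reduce to mapping properties of the Bessel operator $(I-\Delta)^{\sigma/2}$ on the torus, so the plan is to establish the relevant boundedness statements on $L^p(\T)$ and on the $L^p$--$L^q$ scale, then read off (i)--(iv). I would treat them in the order (i), (iii), (ii), (iv). For (i), I would use that $(I-\Delta)^{\mu/2}$ is an isometry from $H_p^\mu(\T)$ onto $L^p(\T)$, so that upon writing $(I-\Delta)^{\nu/2}=(I-\Delta)^{(\nu-\mu)/2}(I-\Delta)^{\mu/2}$ the claim becomes the $L^p$-boundedness, $1<p<\infty$, of the periodic Fourier multiplier with symbol $(1+4\pi^2|k|^2)^{(\nu-\mu)/2}$, $k\in\Z^d$. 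Since $\nu-\mu\le 0$ this symbol is bounded and, extended to $\RsetN$, satisfies the Mikhlin--H\"ormander condition; boundedness on $L^p(\T)$ then follows by de Leeuw's transference principle (equivalently, the periodic Marcinkiewicz multiplier theorem), giving $\|u\|_{\nu,p}\le C\|u\|_{\mu,p}$.

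For (iii), if $\mu=\nu$ then necessarily $p=q$ and there is nothing to prove, so I would assume $\sigma:=\mu-\nu>0$; the hypothesis $\mu-d/p=\nu-d/q$ then reads $\tfrac1q=\tfrac1p-\tfrac\sigma d$. As in (i), applying $(I-\Delta)^{-\mu/2}$ it suffices to show that $(I-\Delta)^{-\sigma/2}$ maps $L^p(\T)$ into $L^q(\T)$ for these exponents. Its convolution kernel -- the periodic Bessel kernel -- is smooth off the diagonal and behaves like $|x|^{\sigma-d}$ near the origin, exactly as the Riesz kernel; since $\T$ is compact, the Hardy--Littlewood--Sobolev inequality yields the desired $L^p\!\to\!L^q$ bound, and applying it to $f=(I-\Delta)^{\mu/2}u$ gives $\|u\|_{\nu,q}\lesssim\|u\|_{\mu,p}$.

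For (ii), set $\beta:=\mu-d/p>0$ with $\beta\notin\N$; I see two routes. The quickest uses the identification $H_p^\mu(\T)\simeq F^\mu_{p2}(\T)$ together with the standard embeddings $F^\mu_{p2}(\T)\hookrightarrow B^\mu_{p\infty}(\T)\hookrightarrow B^{\beta}_{\infty\infty}(\T)\simeq C^{\beta}(\T)$, all from \cite{ST}, the last isomorphism being the one recalled in Section~\ref{sec;spaces}. Alternatively, one argues directly: writing $u=G_\mu*f$ with $f=(I-\Delta)^{\mu/2}u\in L^p(\T)$, one has $G_\mu\in L^{p'}(\T)$ precisely because $\mu>d/p$, so $\|u\|_\infty\le\|G_\mu\|_{p'}\|f\|_p$, while the H\"older seminorm is controlled by bounding the $L^{p'}$-modulus of continuity of the kernel, $\|G_\mu(\cdot-x)-G_\mu(\cdot-y)\|_{L^{p'}(\T)}\lesssim\dist(x,y)^{\beta}$, obtained by splitting the integral near and far from the singularity and using the pointwise bounds on $G_\mu$ and $\nabla G_\mu$ (iterating with higher-order kernels when $\beta>1$). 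Either route yields $H_p^\mu(\T)\hookrightarrow C^{\beta}(\T)$.

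Finally, (iv) is the endpoint $\mu p=d$ excluded from (iii), and I would reach it by combining (i) and (iii): fix $q\in[1,\infty)$, pick $\mu'\in(0,\mu)$, and set $q'=\tfrac{d}{d/p-\mu'}\in(p,\infty)$; then (i) gives $H_p^\mu(\T)\hookrightarrow H_p^{\mu'}(\T)$ and (iii), applied with target smoothness $0$, gives $H_p^{\mu'}(\T)\hookrightarrow H_{q'}^0(\T)=L^{q'}(\T)$. Since $q'\uparrow\infty$ as $\mu'\uparrow\mu=d/p$, choosing $\mu'$ close enough to $\mu$ makes $q'\ge q$, and then $L^{q'}(\T)\hookrightarrow L^q(\T)$ because $\T$ has finite measure; hence $H_p^\mu(\T)\hookrightarrow L^q(\T)$ for every $q<\infty$. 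The routine part here is the bookkeeping of exponents; the only points genuinely requiring care are the passage of the multiplier and kernel estimates from $\RsetN$ to $\T$ -- handled by transference and the compactness of the torus -- and, in (ii), the exclusion of integer values of $\mu-d/p$, which is exactly what keeps the target the classical H\"older space $C^{\beta}(\T)$ rather than a Zygmund space.
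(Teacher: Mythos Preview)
Your proof plan is correct and self-contained; each of the four steps is a standard route to the stated embedding, and the bookkeeping of exponents in (iv) is clean. The paper, by contrast, does not argue any of (i)--(iv) directly: its entire proof consists of citations to \cite[Lemma~2.5]{CG1}, \cite{ST}, and \cite{AH}. So while your approach is not ``different'' in the sense of using an alternative strategy, it is genuinely more informative---you actually sketch why each embedding holds (multiplier boundedness via transference for (i), Bessel-kernel asymptotics plus Hardy--Littlewood--Sobolev for (iii), the $F^\mu_{p2}\hookrightarrow B^\beta_{\infty\infty}$ chain for (ii), and a limiting argument on the smoothness index for (iv)), whereas the paper simply defers to the literature. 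One minor remark: in (iii) you might note that the embedding $H_p^{\mu'}(\T)\hookrightarrow L^{q'}(\T)$ used in your proof of (iv) is really (iii) applied with $\nu=0$, which requires $\mu'>0$; you do impose this, so the argument is fine.
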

\begin{proof}
For (i)-(iii) see \cite[Lemma 2.5]{CG1}, \cite{ST} and the references therein, while for (v) see \cite{AH}.
\end{proof}

\begin{lemma}\label{inclstatW}
\begin{itemize}
  \item[(i)] Let $\nu,\mu\in\R$ with $\nu\leq\mu$, then $W^{\mu,p}(\T)\subset W^{\nu,p}(\T)$.
  \item[(ii)] If $p\mu>d$ and $\mu-d/p$ is not an integer, then $W^{\mu,p}(\T)\subset C^{\mu-d/p}(\T)$.
  \item[(iii)] Let $\nu,\mu\in\R$ with $\nu\leq\mu$, $p,q\in(1,\infty)$ and
  \[
  \mu-\frac{d}{p}=\nu-\frac{d}{q}\ ,
  \]
  then $W^{\mu,p}(\T)\subset W^{\nu,q}(\T)$.
  \item[(iv)] If $p\mu=d$, then $W^{\mu,p}(\T)\subset L^q(\T)$ for all $1\leq q<\infty$.
\end{itemize}
\end{lemma}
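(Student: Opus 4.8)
The plan is to deduce the four inclusions for the Sobolev–Slobodeckii (Besov) scale $W^{\mu,p}(\T)=B^{\mu}_{pp}(\T)$ from the already–stated embeddings for the Bessel–potential scale $H^{\mu}_p(\T)$ in Lemma \ref{inclstat}, together with the elementary sandwich relations between Besov and Bessel classes. Recall that for any $\mu\in\R$, $p\in(1,\infty)$ and any $\eps>0$ one has the continuous inclusions
\[
B^{\mu+\eps}_{pp}(\T)\hookrightarrow H^{\mu}_p(\T)\hookrightarrow B^{\mu-\eps}_{pp}(\T),
\]
and moreover $B^{\mu}_{p1}(\T)\hookrightarrow H^{\mu}_p(\T)\hookrightarrow B^{\mu}_{p\infty}(\T)$, which follow from the definitions via Littlewood–Paley decomposition (see \cite{ST,Lunardi}); when $p=2$ one even has $W^{\mu,2}=H^{\mu}_2$ exactly. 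These, plus the monotonicity $B^{\mu}_{pq}\hookrightarrow B^{\mu}_{pq'}$ for $q\le q'$, are the only structural facts needed.

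\textbf{Key steps.} First, item (i) is immediate: for $\nu\le\mu$ the inclusion $W^{\mu,p}(\T)\subset W^{\nu,p}(\T)$ is the standard monotonicity of the Besov scale in the smoothness index, proved directly from the Gagliardo seminorm (or from $B^{\mu}_{pp}\hookrightarrow B^{\nu}_{pp}$). Second, for (iii) — the Sobolev-type embedding with a gain of integrability at fixed scaling $\mu-d/p=\nu-d/q$, $\nu\le\mu$, $p\le q$ — I would interpolate: pick $\eps>0$ small and use $W^{\mu,p}=B^{\mu}_{pp}\hookrightarrow H^{\mu-\eps}_p$; apply Lemma \ref{inclstat}(iii) to pass to $H^{\nu-\eps+\delta}_q$ with $\mu-\eps-d/p=\nu-\eps+\delta-d/q$, i.e. $\delta=\eps-(d/p-d/q)+(d/p-d/q)=\eps$ after matching — more cleanly, since $\mu-d/p=\nu-d/q$, Lemma \ref{inclstat}(iii) gives $H^{\mu}_p\hookrightarrow H^{\nu}_q$, and then composing $B^{\mu}_{pp}\hookrightarrow H^{\mu-\eps'}_p\hookrightarrow H^{\nu-\eps'}_q\hookrightarrow B^{\nu-\eps'}_{pp}\hookrightarrow B^{\nu}_{qq}$? — the last step fails, so instead I use the sharp chain $B^{\mu}_{pp}\hookrightarrow B^{\nu}_{qp}\hookrightarrow B^{\nu}_{qq}=W^{\nu,q}$, where the first embedding is the Besov Sobolev embedding with $\mu-d/p=\nu-d/q$ (valid since it lowers $p$ to $q$ keeping the microscopic index $p$ fixed; see \cite[Section 3.5]{ST}) and the second uses $p\le q$ with monotonicity in the last index. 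Third, (ii) follows from (iii) with a $C^{\mu-d/p}$ target plus $W^{\mu,p}\hookrightarrow B^{\mu-d/p}_{\infty\infty}=C^{\mu-d/p}$ when $p\mu>d$: again the Besov embedding $B^{\mu}_{pp}\hookrightarrow B^{\mu-d/p}_{\infty p}\hookrightarrow B^{\mu-d/p}_{\infty\infty}\simeq C^{\mu-d/p}$, the non-integer hypothesis ensuring the last identification. Fourth, the limiting case (iv) with $p\mu=d$: here $W^{\mu,p}=B^{\mu}_{pp}\hookrightarrow B^{d/q'}_{qq}$ for no direct reason, so instead I embed $B^{\mu}_{pp}\hookrightarrow H^{\mu-\eps}_p$ for small $\eps$, note $p(\mu-\eps)<d$ so the subcritical Sobolev embedding $H^{\mu-\eps}_p\hookrightarrow L^{q}(\T)$ holds with $1/q=1/p-(\mu-\eps)/d>0$, and let $\eps\downarrow0$ to cover every finite $q$; this uses Lemma \ref{inclstat}(iii)-(iv) and the fact that on the torus $L^{q_2}\hookrightarrow L^{q_1}$ for $q_1\le q_2$.

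\textbf{Main obstacle.} The only genuinely delicate point is the sharp Besov-scale Sobolev embedding $B^{\mu}_{pp}(\T)\hookrightarrow B^{\nu}_{qp}(\T)$ under $\mu-d/p=\nu-d/q$ and $p\le q$ (and its endpoint $C^\alpha$ version), which must be invoked with the correct fine indices — one cannot route through the Bessel scale alone without losing an $\eps$ of smoothness, and on the torus one must be slightly careful that these embeddings are inherited from $\R^d$ via periodization, a fact recorded in \cite[Section 3.5.4]{ST}. I expect the bulk of the write-up to be the careful bookkeeping of the three indices $(\mu,p,q)$ in (iii), together with citing the periodic Besov embedding theorem; everything else is a one-line reduction to Lemma \ref{inclstat} or to the monotonicity of the Gagliardo seminorm. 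An alternative, fully self-contained route for (iii) avoids Besov technology entirely: prove it directly from the Gagliardo seminorm by the standard fractional Sobolev inequality argument (as in \cite{Leoni}), but this duplicates well-known material and I would prefer the interpolation-based proof with a citation.
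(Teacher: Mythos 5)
Your proposal is correct, but it is worth noting that the paper does not actually write out an argument here: its ``proof'' of Lemma \ref{inclstatW} consists of citations to \cite[Section 3.5.5]{ST} for (i)--(iii) and to \cite[Theorem 6.9]{DNPV} for (iv). What you do is essentially unpack the first citation: your chain for (iii), $B^{\mu}_{pp}\hookrightarrow B^{\nu}_{qp}\hookrightarrow B^{\nu}_{qq}$ under $\mu-d/p=\nu-d/q$ and $p\le q$, and the endpoint version $B^{\mu}_{pp}\hookrightarrow B^{\mu-d/p}_{\infty p}\hookrightarrow B^{\mu-d/p}_{\infty\infty}\simeq C^{\mu-d/p}$ for (ii), are precisely the periodic Besov embedding theorems recorded in \cite{ST}, so for (i)--(iii) you are on the same road the paper points to, just travelled explicitly (and you correctly abandon, mid-stream, the $\eps$-lossy detour through the Bessel scale, which indeed cannot give the sharp statement). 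For (iv) you genuinely deviate: instead of invoking the critical-exponent result of \cite{DNPV}, you lose an $\eps$ of smoothness, pass to $H^{\mu-\eps}_p$ via Lemma \ref{inclBesovBessel}, apply the subcritical embedding of Lemma \ref{inclstat}(iii), and observe that the target exponent $d/\eps$ exhausts all finite $q$; since for each fixed $q$ a single $\eps$ suffices, no uniformity issue arises, and this is a perfectly valid, more self-contained alternative. The only caveat worth flagging is your systematic identification $W^{\mu,p}\simeq B^{\mu}_{pp}$, which (as the paper itself notes in Section \ref{sec;spaces}) holds for non-integer $\mu$; for integer smoothness $W^{\mu,p}\simeq H^{\mu}_p$ and one should instead quote Lemma \ref{inclstat} directly, but since the lemma is applied in the paper only with fractional indices this is a presentational point rather than a gap.
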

\begin{proof}
For (i)-(iii) we refer to \cite[Section 3.5.5]{ST} and \cite{ST}, while for (iv) see \cite[Theorem 6.9]{DNPV}.
\end{proof}

\begin{lemma}\label{inclBesovBessel}
We have the following inclusions for $\mu\in\R$.
\begin{itemize}
  \item[(i)] $B^{\mu}_{pp}(\T)\subseteq H_p^{\mu}(\T)\subseteq B_{p,2}^\mu(\T)$ for $1<p\leq 2$. 
  \item[(ii)] $B_{p,2}^\mu(\T) \subseteq H_p^{\mu}(\T)\subseteq B^{\mu}_{pp}(\T)$ for $2\leq p<\infty$.
  \item[(iii)] $B_{p1}^\mu(\T)\subseteq H_p^\mu(\T)\subseteq B^\mu_{p\infty}(\T)$ for $1\leq p\leq\infty$.
\end{itemize}
\end{lemma}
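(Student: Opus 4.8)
The plan is to derive all three chains of inclusions from the corresponding results on $\R^d$ by transference (periodization), combined with the Littlewood--Paley characterizations of $B^\mu_{pq}$ and $F^\mu_{pq}$ recalled in Section~\ref{sec;spaces}. The key observation is that, under the Littlewood--Paley decomposition, one has the classical identification $H^\mu_p \simeq F^\mu_{p,2}$ for all $\mu \in \R$, $1 < p < \infty$; once this is in hand, the inclusions reduce to purely Fourier-analytic statements about Triebel--Lizorkin and Besov scales with a fixed smoothness index $\mu$ and varying (or comparing) the fine index.

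\textbf{Step 1: Reduce $H^\mu_p$ to Triebel--Lizorkin.} First I would recall (citing \cite{ST}, Section 3.5.2--3.5.4) that on the torus the Bessel potential space satisfies $H^\mu_p(\T) = F^\mu_{p,2}(\T)$ with equivalence of norms, for $1 < p < \infty$ and $\mu \in \R$. This is the periodic analogue of the standard Mikhlin-multiplier identification on $\R^d$; the multiplier $(1 + 4\pi^2|k|^2)^{\mu/2}$ acting on Fourier series is handled by the periodic Mikhlin--H\"ormander theorem, exactly as in \cite[Remark 2.3]{CG1} where the equivalence $\|u\|_{\mu,p} \simeq \|u\|_p + \|(-\Delta)^{\mu/2}u\|_p$ is already invoked.

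\textbf{Step 2: Compare Besov and Triebel--Lizorkin at fixed smoothness.} With Step 1 in place, parts (i) and (ii) follow from the elementary embeddings between $B^\mu_{p,q}$ and $F^\mu_{p,q}$ that depend only on the summation order of the dyadic blocks: one has $B^\mu_{p,q} \hookrightarrow F^\mu_{p,q} \hookrightarrow B^\mu_{p,p}$ when $q \le p$, and $B^\mu_{p,p} \hookrightarrow F^\mu_{p,q} \hookrightarrow B^\mu_{p,q}$ when $q \ge p$. Taking $q = 2$: for $1 < p \le 2$ we get $B^\mu_{p,2} \hookrightarrow F^\mu_{p,2} = H^\mu_p \hookrightarrow B^\mu_{p,p}$... wait, I need to be careful about the direction. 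Actually for $p \le 2$ we have $2 \ge p$, so $B^\mu_{p,p} \hookrightarrow F^\mu_{p,2} \hookrightarrow B^\mu_{p,2}$, which is exactly (i); and for $p \ge 2$ we have $2 \le p$, so $B^\mu_{p,2} \hookrightarrow F^\mu_{p,2} \hookrightarrow B^\mu_{p,p}$, which is exactly (ii). These block-summation embeddings are a direct consequence of Minkowski's inequality for the $\ell^q(L^p)$ versus $L^p(\ell^q)$ norms and are stated in \cite[Section 3.5]{ST} (and \cite{Leoni} for the whole space); I would cite them rather than reprove them.

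\textbf{Step 3: The monotonicity in the fine index for (iii).} For part (iii), the relevant fact is the monotone embedding $B^\mu_{p,q_1} \hookrightarrow B^\mu_{p,q_2}$ whenever $q_1 \le q_2$, valid for all $\mu$ and all $1 \le p \le \infty$, which is immediate from $\ell^{q_1} \hookrightarrow \ell^{q_2}$. Combined with the sandwich $B^\mu_{p,1} \hookrightarrow F^\mu_{p,q} \hookrightarrow B^\mu_{p,\infty}$ for any $q$ (again from $\ell^1 \hookrightarrow \ell^q \hookrightarrow \ell^\infty$ applied blockwise), and with Step~1 identifying $H^\mu_p = F^\mu_{p,2}$ for $1 < p < \infty$, one obtains $B^\mu_{p,1}(\T) \hookrightarrow H^\mu_p(\T) \hookrightarrow B^\mu_{p,\infty}(\T)$. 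The endpoint cases $p = 1$ and $p = \infty$ are included here only because (iii) as stated allows them; strictly, $H^\mu_p$ is the Bessel potential space which coincides with $F^\mu_{p,2}$ only for $1 < p < \infty$, so for $p \in \{1, \infty\}$ the statement should be read with $H^\mu_p$ understood as $F^\mu_{p,2}$, and the same $\ell^1 \hookrightarrow \ell^2 \hookrightarrow \ell^\infty$ argument applies verbatim.

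\textbf{Main obstacle.} The only genuine subtlety is the rigorous justification of $H^\mu_p(\T) = F^\mu_{p,2}(\T)$ on the torus (Step~1), since this requires the periodic Littlewood--Paley theory and a periodic vector-valued multiplier theorem; everything downstream is soft functional analysis on sequence spaces. Since \cite{ST} develops precisely the periodic Triebel--Lizorkin and Besov calculus and contains this identification, I would lean on that reference, and the proof of the lemma becomes essentially a citation plus the three-line $\ell^q$-monotonicity arguments.
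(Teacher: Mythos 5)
Your proposal is correct and takes essentially the same route as the paper: both proofs rest on the identification $H^\mu_p(\T)\simeq F^\mu_{p,2}(\T)$ (cf. \cite[Theorem 3.5.4-(v)]{ST}) followed by the standard fine-index comparisons between the Besov and Triebel--Lizorkin scales (\cite[Section 3.5.1, Remark 4]{ST}, \cite[Theorem 6.4.4]{BL}) for (i)--(ii) and the $\ell^1\subseteq\ell^2\subseteq\ell^\infty$ sandwich for (iii). The only minor divergence is at the endpoints $p\in\{1,\infty\}$ in (iii), where the paper cites \cite[Theorem 6.2.4]{BL} and \cite[Theorem 20]{Taibleson2}, which yield $B^\mu_{p1}\subseteq H^\mu_p\subseteq B^\mu_{p\infty}$ for the genuine Bessel potential space via direct convolution estimates, so your reinterpretation of $H^\mu_p$ as $F^\mu_{p,2}$ there is not actually needed.
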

\begin{proof}
The result on $\R^d$ is proven in \cite[Section 2.3.3]{trbookinterpolation} (see also \cite[Theorem 6.4.4]{BL}. Recalling that $H_p^{\mu}$ is isomorphic to a Triebel-Lizorkin scale (see \cite[Theorem 3.5.4-(v)]{ST} and the same chapter for the definition of this space), one uses \cite[Remark 3.5.1.4-(20)]{ST} to show (i) and (ii). Property (iii) is proved in \cite[Theorem 6.2.4]{BL} and \cite[Theorem 20]{Taibleson2}.
\end{proof}
We now recall the following compactness result.
\begin{lemma}
For $\mu>0$ such that $\mu p<d$ and $1<q<\frac{dp}{d-\mu p}$ the embedding of $H_p^\mu(\T)$ onto $L^q(\T)$ is compact.
\end{lemma}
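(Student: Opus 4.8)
The plan is to deduce the statement from a single compact embedding---that $H_p^\mu(\T)\hookrightarrow L^p(\T)$ is compact for every $\mu>0$---and then to prove this by the classical Riesz--Fr\'echet--Kolmogorov compactness criterion combined with the Nikol'skii-type information contained in Lemma~\ref{inclBesovBessel}(iii). Assume for the moment that $H_p^\mu(\T)\hookrightarrow L^p(\T)$ is compact. If $q\le p$, then $L^p(\T)\hookrightarrow L^q(\T)$ continuously (as $\T$ has finite measure), so $H_p^\mu(\T)\hookrightarrow L^q(\T)$ is compact, being the composition of a compact operator with a bounded one. If instead $p<q<\frac{dp}{d-\mu p}=:p^*$, I would combine the \emph{critical} Sobolev embedding $H_p^\mu(\T)\hookrightarrow H_{p^*}^0(\T)=L^{p^*}(\T)$, which is Lemma~\ref{inclstat}(iii) with $\nu=0$ (note $p^*\in(p,\infty)$ since $0<\mu p<d$), with the interpolation inequality
\[
\|u\|_{L^q(\T)}\le \|u\|_{L^p(\T)}^{1-\theta}\,\|u\|_{L^{p^*}(\T)}^{\theta},\qquad \frac1q=\frac{1-\theta}{p}+\frac{\theta}{p^*},
\]
where $\theta\in(0,1)$ exists exactly because $p<q<p^*$: any bounded sequence in $H_p^\mu(\T)$ is then bounded in $L^{p^*}(\T)$ and has a subsequence converging in $L^p(\T)$, hence, by the displayed inequality, Cauchy in $L^q(\T)$. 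Thus compactness into $L^q(\T)$ follows in all admissible cases once the core fact is known.

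It remains to prove that $H_p^\mu(\T)\hookrightarrow L^p(\T)$ is compact for $\mu>0$. By Lemma~\ref{inclstat}(i) one may assume $\mu\in(0,1)$ (if $\mu\ge 1$, fix any $\mu'\in(0,1)$ and factor $H_p^\mu(\T)\hookrightarrow H_p^{\mu'}(\T)\hookrightarrow L^p(\T)$). For $\mu\in(0,1)$, Lemma~\ref{inclBesovBessel}(iii) yields $H_p^\mu(\T)\hookrightarrow B^{\mu}_{p\infty}(\T)\simeq N^{\mu,p}(\T)$, so that, by the definition of the Nikol'skii norm recalled in Section~\ref{sec;spaces}, there is $C>0$ with
\[
\sup_{\|u\|_{\mu,p}\le1}\ \|u(\cdot+h)-u\|_{L^p(\T)}\ \le\ C\,|h|^{\mu}\ \longrightarrow\ 0\qquad\text{as }|h|\to0 .
\]
Since the unit ball of $H_p^\mu(\T)$ is also bounded in $L^p(\T)$ and $\T$ is compact (so no decay-at-infinity condition is needed), the Riesz--Fr\'echet--Kolmogorov compactness criterion (see e.g.~\cite{Leoni}) implies it is precompact in $L^p(\T)$, i.e. the embedding is compact.

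The argument is essentially routine once these embeddings are in hand; the only point that is not automatic is the regime $q>p$, where compactness cannot come from a single inclusion but must be extracted by interpolating the (non-compact) critical embedding $H_p^\mu\hookrightarrow L^{p^*}$ against the compact embedding $H_p^\mu\hookrightarrow L^p$. It is also worth noticing that lowering $\mu$ below $1$ in the proof of the core fact is harmless precisely because the critical exponent $p^*$ plays no role at that stage.
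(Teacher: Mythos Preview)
Your proof is correct, and the overall architecture---first establish that $H_p^\mu(\T)\hookrightarrow L^p(\T)$ is compact, then upgrade to $L^q$ for $p<q<p^*$ by interpolating against the continuous critical embedding $H_p^\mu\hookrightarrow L^{p^*}$---coincides with the paper's. The genuine difference lies in how the core compactness into $L^p$ is obtained. The paper argues through abstract interpolation theory: it invokes the compactness of $W^{1,p}(\T)\hookrightarrow L^p(\T)$ (Rellich--Kondrachov), then transfers compactness to the real interpolation scale $B^\mu_{pq}=(L^p,W^{1,p})_{\mu,q}$ via the results of \cite{trbookinterpolation} and \cite{AmannCompact}, and finally composes with the inclusions of Lemma~\ref{inclBesovBessel} between Bessel and Besov classes. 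Your route is more elementary and self-contained: the embedding $H_p^\mu\hookrightarrow B^\mu_{p\infty}\simeq N^{\mu,p}$ from Lemma~\ref{inclBesovBessel}(iii) directly yields the uniform $L^p$-equicontinuity $\|u(\cdot+h)-u\|_p\le C|h|^\mu$, and Riesz--Fr\'echet--Kolmogorov on the compact torus finishes the job. The paper's approach situates the result within the interpolation machinery already pervading the manuscript, whereas yours avoids any appeal to compactness theorems for interpolation spaces and makes the mechanism (control of translations) transparent; both are valid, and your argument is arguably cleaner for this particular lemma.
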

\begin{proof}
When $\mu=k\in\N$ this is the classical Rellich-Kondrachov theorem \cite{Leoni}. We restrict to consider $\mu\in(0,1)$. When $p=2$ the result can be deduced by the fact that $H_2^\mu(\T)\simeq W^{\mu,2}(\T)$, and by classical compactness properties of real interpolation spaces, cf \cite[Section 1.16.4]{trbookinterpolation} applied with $A=(L^2(\T),W^{1,2}(\T))_{\theta,2}$, $A_0=B=L^2(\T)$, $A_1=W^{1,2}(\T)$, using the compactness of $W^{1,2}(\T)$ onto $L^2(\T)$ that give the compact embedding of $H_2^\mu$ onto $L^2$. Then, the compact embedding of $H_2^\mu$ onto $L^q$, $1<q<2d/(d-2\mu)$ follows by interpolation as we will show in the case $p\neq 2$ below. The general case can be handled as follows. 
It is well-known that $W^{1,p}(\T)$ is compactly embedded onto $L^r(\T)$ for all $r$ such that $1<r<\frac{dp}{d-p}$ by Rellich-Kondrachov Theorem, and hence the identity map $T:W^{1,p}(\T)\to L^p(\T)$, $T(u)=u$ is compact. Moreover, $T$ is also continuous from $L^p(\T)$ onto itself. 
Thus, one first recalls that Besov spaces can be defined via real interpolation as follows $B^{\sigma}_{pq}(\T)=(L^p(\T),W^{1,p}(\T))_{\sigma,q}$. Then, one may compose continuous embeddings from Lemma \ref{inclBesovBessel}-(i) and -(ii) with compact embeddings for fractional Sobolev spaces $W^{\mu,p}(\T)$ onto $L^p(\T)$ as obtained in \cite{AmannCompact}. The latter can be deduced in turn via compactness results for real interpolation spaces \cite[Section 1.16.4]{trbookinterpolation} as in the case $p=2$. Therefore, we have the compact embedding of $H_p^\mu(\T)$ onto $L^p(\T)$.
We now take a bounded sequence $u_n$ in $H_p^\mu(\T)$. Therefore, one can extract a subsequence $u_{n_k}$ converging strongly in $L^p(\T)$. By interpolation, for every $p<q<\frac{dp}{d-\mu p}$, there exists $\theta\in(0,1)$ such that
\[
\|u_{n_k}-u_{n_j}\|_{q}\leq \|u_{n_k}-u_{n_j}\|_{p}^{1-\theta}\|u_{n_k}-u_{n_j}\|_{\frac{dp}{d-\mu p}}^\theta\to0
\]
as $j,k\to\infty$ since $u_{n_k}$ is bounded in $H_p^\mu(\T)$, which is in turn continuously embedded onto $L^{\frac{dp}{d-\mu p}}(\T)$ by Lemma \ref{inclstat}-(iii). Then we have the strong convergence in $L^q$ with $q$ as above, as desired. \end{proof}
We first recall the following Gagliardo-Nirenberg inequality
\begin{lemma}\label{GN1}
Let $1<q,r<\infty$, $1< z\leq\infty$ and $s\in(1/2,1)$. Let $u\in H_q^{2s}(\T)\cap L^z(\T)$. Then, there exists a constant $C$ depending on $d,q,z,s,r$ and $\theta\in(0,1)$ such that
\begin{equation}\label{in1}
\|u\|_{W^{1,r}(\T)}\leq C\|u\|_{H^{2s}_{q}(\T)}^\theta\|u\|_{L^z(\T)}^{1-\theta}\ .
\end{equation}
where
\[
\frac1r=\frac{1}{d}+\theta\left(\frac1q-\frac{2s}{d}\right)+\frac{1-\theta}{z}\ , 1\leq 2s\theta\ .
\]
\end{lemma}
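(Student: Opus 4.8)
The plan is to reduce \eqref{in1} to two standard ingredients: a complex interpolation inequality between the endpoint spaces $L^z(\T)$ and $H_q^{2s}(\T)$, and a fractional Sobolev embedding of the resulting intermediate Bessel potential space into $W^{1,r}(\T)$. Throughout I would use the identifications recalled in Section \ref{sec;spaces}, i.e. $L^p(\T)\simeq H_p^0(\T)$ for $1<p<\infty$ and $H_r^1(\T)\simeq W^{1,r}(\T)$.

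First, I would let $\theta\in(0,1)$ be the exponent determined by
\[
\frac1r=\frac1d+\theta\Big(\frac1q-\frac{2s}{d}\Big)+\frac{1-\theta}{z}\ ,
\]
record the standing assumption $2s\theta\ge1$, and carry out the short bookkeeping showing that the stated hypotheses on $q,r,z,s,d$ do force $\theta\in(0,1)$ and $r\in(1,\infty)$. Then I would introduce $p\in(1,\infty)$ via $\frac1p=\frac{1-\theta}{z}+\frac\theta q$ and invoke the complex interpolation identity for Bessel potential (equivalently, Triebel--Lizorkin $F^{\bullet}_{\bullet,2}$) scales on the torus,
\[
\big[L^z(\T),H_q^{2s}(\T)\big]_\theta\simeq H_p^{2s\theta}(\T)\ ,
\]
see \cite{BL,ST,trbookinterpolation}. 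Combined with the elementary interpolation estimate $\|u\|_{[X_0,X_1]_\theta}\le C\|u\|_{X_0}^{1-\theta}\|u\|_{X_1}^\theta$ (valid since $u\in H_q^{2s}(\T)\cap L^z(\T)$), this yields
\[
\|u\|_{H_p^{2s\theta}(\T)}\le C\|u\|_{L^z(\T)}^{1-\theta}\|u\|_{H_q^{2s}(\T)}^\theta\ .
\]

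For the second step, I would observe that, by the definition of $p$, the index relation above is equivalent to $2s\theta-\frac dp=1-\frac dr$. Since $2s\theta\ge1$, Lemma \ref{inclstat}-(iii) (applied with $\mu=2s\theta$, $\nu=1$) then gives the continuous embedding $H_p^{2s\theta}(\T)\hookrightarrow H_r^1(\T)\simeq W^{1,r}(\T)$, and chaining this with the previous estimate produces \eqref{in1}.

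The main obstacle is the endpoint $z=\infty$, where $L^\infty(\T)$ fails to be an admissible endpoint for complex interpolation. I would handle it by interpolating instead against $\mathrm{bmo}(\T)=F^0_{\infty,2}(\T)$, for which the identity $[\mathrm{bmo}(\T),H_q^{2s}(\T)]_\theta\simeq H_p^{2s\theta}(\T)$ with $\frac1p=\frac\theta q$ holds, together with the trivial inclusion $L^\infty(\T)\hookrightarrow\mathrm{bmo}(\T)$; alternatively, one may simply quote the known fractional Gagliardo--Nirenberg inequalities in Bessel/Besov scales. Apart from this, the only remaining care is the index arithmetic --- in particular checking that the condition $1\le 2s\theta$ is exactly the threshold making the Sobolev step legitimate without loss.
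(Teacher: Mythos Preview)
Your argument is correct and takes a genuinely different, more self-contained route than the paper. The paper's proof is essentially a citation: for $z\in(1,\infty)$ it invokes the fractional Gagliardo--Nirenberg inequalities of \cite{HMOW} and \cite{Leonori} on $\R^d$, transferred to $\T$ by an extension argument, and for the endpoint $z=\infty$ it appeals (under the additional restriction $q\ge 2$) to \cite{BM} combined with the inclusion $H^{2s}_q\hookrightarrow W^{2s,q}$. Your approach---complex interpolation $[L^z,H^{2s}_q]_\theta\simeq H^{2s\theta}_p$ with $\tfrac1p=\tfrac{1-\theta}{z}+\tfrac{\theta}{q}$, followed by the Sobolev embedding $H^{2s\theta}_p\hookrightarrow H^1_r\simeq W^{1,r}$ via Lemma~\ref{inclstat}(iii)---is in fact the mechanism underlying those cited inequalities, so your proof is more transparent and stays entirely within the toolkit already assembled in Section~\ref{sec;spaces}; the index check $\tfrac1p-\tfrac1r=\tfrac{2s\theta-1}{d}$ you indicate is exactly what makes the embedding step fire under the hypothesis $2s\theta\ge 1$. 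The only place requiring some care is the $z=\infty$ endpoint: the $\mathrm{bmo}=F^0_{\infty,2}$ substitution you propose is legitimate, but complex interpolation of Triebel--Lizorkin scales with one endpoint at $p=\infty$ is less elementary than the finite-$p$ case and deserves a precise reference; alternatively, as you also note, one may simply quote the literature at that endpoint---which is precisely what the paper does.
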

\begin{proof}

Inequality \eqref{in1} has been obtained in \cite[Corollary 1.5]{HMOW} and \cite[Theorem 6]{Leonori} (for $j=0$ and $\mu=2s$) on the whole space for $r,z\in(1,\infty)$, the periodic case being obtained via extension arguments, see e.g. \cite[Lemma A.3]{CG5} or \cite[Lemma 2.5]{CG1}. When e.g. $q\geq 2$, the endpoint case $z=\infty$ can be deduced from the results in \cite[Corollary 3.2-(c)]{BM} owing to the inclusion $H^{2s}_q\hookrightarrow W^{2s,q}$.
\end{proof}

We now provide a Gagliardo-Nirenberg interpolation inequality involving H\"older and Bessel potential scales. 
\begin{lemma}\label{GN2}
Let $u\in H^{2s}_q(\T)\cap C^{\alpha}(\T)$, $q\in(1,\infty)$, $\alpha\in(0,1)$. There exists a constant $c>0$ depending on $d,\beta,\alpha,q,s,p$ such that
\[
\|u\|_{W^{1,p}(\T)}\leq c\|u\|_{H^{2s}_q(\T)}^\beta\|u\|_{C^{\alpha}(\T)}^{1-\beta}\ ,
\]
when the following compatibility conditions hold
\[
\frac1p=\frac{1}{d}+(1-\beta)\left(\frac1q-\frac{2s}{d}\right)-\beta\frac{\alpha}{d}\ ,
\]
with 
\[
\beta\in\left[\frac{1-\alpha}{2s-\alpha},1\right)\ ,\alpha\neq 2s-\frac{d}{q}
\]
\end{lemma}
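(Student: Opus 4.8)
The plan is to follow the scheme used for Lemma~\ref{GN1}: transfer the inequality to the whole space $\R^d$, recast the three spaces in Besov/Triebel--Lizorkin language, and invoke a Gagliardo--Nirenberg inequality in which one endpoint space is of H\"older type, as in \cite{BM,NHolder,Miranda}.

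First I would extend $u$ from $\T$ to a compactly supported function on $\R^d$ by an extension operator that is simultaneously bounded on $H^{2s}_q$, on $C^\alpha$ and on $W^{1,p}$ (cf.\ \cite[Lemma~A.3]{CG5}, \cite[Lemma~2.5]{CG1}); this is routine and reduces everything to the Euclidean setting. On $\R^d$ I would use the identifications $C^\alpha\simeq B^\alpha_{\infty\infty}$ (valid since $\alpha\in(0,1)$), $H^{2s}_q\simeq F^{2s}_{q,2}$ and $W^{1,p}\simeq H^1_p\simeq F^1_{p,2}$, together with the embeddings $B^{2s}_{q,1}\hookrightarrow H^{2s}_q\hookrightarrow B^{2s}_{q,\infty}$ and $B^1_{p,1}\hookrightarrow W^{1,p}$ from Lemma~\ref{inclBesovBessel}-(iii). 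Writing $\sigma:=\beta\,2s+(1-\beta)\alpha$ and letting $p_\beta$ be defined by $1/p_\beta=\beta/q$, one applies H\"older's inequality to each dyadic block of a Littlewood--Paley decomposition (namely $\|\Delta_ju\|_{p_\beta}\le\|\Delta_ju\|_\infty^{1-\beta}\|\Delta_ju\|_q^{\beta}$ together with $2^{j\sigma}=(2^{j\alpha})^{1-\beta}(2^{2sj})^{\beta}$, which is the multiplicative interpolation inequality between the two endpoint spaces made explicit), obtaining at once
\[
\|u\|_{B^{\sigma}_{p_\beta,\infty}(\R^d)}\;\lesssim\;\|u\|_{B^{2s}_{q,\infty}(\R^d)}^{\beta}\,\|u\|_{B^{\alpha}_{\infty\infty}(\R^d)}^{1-\beta}\;\lesssim\;\|u\|_{H^{2s}_q(\R^d)}^{\beta}\,\|u\|_{C^\alpha(\R^d)}^{1-\beta}.
\]

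When $\sigma>1$, i.e.\ $\beta>\frac{1-\alpha}{2s-\alpha}$, the differential dimension $\sigma-d/p_\beta$ of $B^{\sigma}_{p_\beta,\infty}$ equals $1-d/p$ with $p$ as in the compatibility relation, and the sub-critical Sobolev embedding for Besov spaces gives $B^{\sigma}_{p_\beta,\infty}(\R^d)\hookrightarrow B^1_{p,1}(\R^d)\hookrightarrow W^{1,p}(\R^d)$; undoing the extension yields the periodic estimate. Here the hypothesis $\alpha\ne 2s-d/q$ serves to guarantee that the two endpoint spaces genuinely lie at different differential dimension, so that $p_\beta<p$ strictly and the embedding above is non-borderline --- it rules out the degenerate configuration in which varying $\beta$ does not move $1/p$ and one is pushed into a limiting Zygmund/BMO-type embedding.

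The delicate point is the left endpoint $\beta=\frac{1-\alpha}{2s-\alpha}$, i.e.\ $\sigma=1$ and $p=p_\beta$: there the elementary argument above only produces $B^1_{p,\infty}$, which does not embed into $W^{1,p}$, and the missing third index cannot be recovered by the same block-wise H\"older estimate, since even the best available source $H^{2s}_q\hookrightarrow B^{2s}_{q,\max(q,2)}$ leads, after interpolation, to a summability exponent strictly larger than $\min(p,2)$. This endpoint is precisely where one must invoke the sharp Triebel--Lizorkin-valued Gagliardo--Nirenberg inequality with a H\"older endpoint, $\|u\|_{F^1_{p,2}}\lesssim\|u\|_{F^{2s}_{q,2}}^{\beta}\|u\|_{B^\alpha_{\infty\infty}}^{1-\beta}$, as established in \cite{NHolder,Miranda} (compare \cite{BM}), whose hypotheses cover exactly the admissible range of $\beta,\alpha,q,s$ in the statement. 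Combining the two cases and transferring back to the torus completes the argument; this endpoint analysis, together with checking the parameter ranges of the cited inequalities, is the only genuine obstacle.
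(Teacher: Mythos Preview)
Your overall strategy---extension to $\R^d$, Besov identifications $C^\alpha\simeq B^\alpha_{\infty\infty}$, $H^{2s}_q\hookrightarrow B^{2s}_{q,\infty}$, $B^1_{p,1}\hookrightarrow W^{1,p}$---coincides with the paper's. The difference is in how the interpolation step is carried out. The paper does not split into the two regimes $\sigma>1$ and $\sigma=1$: it applies directly \cite[Theorem~4.1]{HMOW}, which gives
\[
\|u\|_{B^{1}_{p,1}(\R^d)}\leq C\,\|u\|_{B^{\alpha}_{\infty\infty}(\R^d)}^{1-\beta}\,\|u\|_{B^{2s}_{q,\infty}(\R^d)}^{\beta}
\]
under the constraint $1\le (1-\beta)\alpha+2s\beta$ (i.e.\ exactly $\beta\ge\frac{1-\alpha}{2s-\alpha}$) and $\alpha\neq 2s-d/q$. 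The key point is that this Besov Gagliardo--Nirenberg inequality already produces the third index $1$ on the left, so the endpoint $\beta=\frac{1-\alpha}{2s-\alpha}$ requires no separate treatment; one then just uses $B^1_{p,1}\hookrightarrow H^1_p$ and $H^{2s}_q\hookrightarrow B^{2s}_{q,\infty}$ from Lemma~\ref{inclBesovBessel}-(iii).

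Your block-wise H\"older argument is a correct hands-on proof of the weaker statement with target $B^\sigma_{p_\beta,\infty}$, and for $\sigma>1$ the subsequent embedding $B^\sigma_{p_\beta,\infty}\hookrightarrow B^1_{p,1}$ is indeed standard. The genuine gap is at the endpoint: you recognise that $B^1_{p,\infty}$ is insufficient, but the remedy you propose---a Triebel--Lizorkin Gagliardo--Nirenberg inequality ``as established in \cite{NHolder,Miranda}''---is not available in those references. Those are the 1965--66 papers of Miranda and Nirenberg, which treat classical Sobolev/H\"older interpolation (the case $s=1$ of the present lemma, as the paper notes in the Remark following the proof) and contain no statement in the $F^s_{p,q}$ or $B^s_{p,q}$ scale. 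The inequality you need at the endpoint is precisely \cite[Theorem~4.1]{HMOW}, already in the bibliography; once you cite it, the two-case analysis becomes unnecessary and your argument collapses to the paper's.
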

\begin{proof}
We first prove the inequality on $\R^d$, the periodic case being again a consequence of an extension argument. We use \cite[Theorem 4.1]{HMOW} saying that
\[
\|u\|_{B^{\nu}_{pq}(\R^d)}\leq C\|u\|_{B^{\nu_0}_{p_0\infty}(\R^d)}^{1-\beta}\|u\|_{B^{\nu_1}_{p_1\infty}(\R^d)}^\beta
\]
holds if
\[
\frac{d}{p}-\nu=(1-\beta)\left(\frac{d}{p_0}-\nu_0\right)+\beta \left(\frac{d}{p_1}-\nu_1\right);
\]
\[
\nu_0-\frac{d}{p_0}\neq \nu_1-\frac{d}{p_1};
\]
\[
\nu\leq (1-\beta)\nu_0+\beta \nu_1\ .
\]
We take $p_0=\infty$, $q=1$, $\nu_1=2s$, $\nu=1$, $p_1=q$ to get
\[
\|u\|_{B^{1}_{p1}(\R^d)}\leq C_1\|u\|_{B^{\alpha}_{\infty\infty}(\R^d)}^{1-\beta}\|u\|_{B^{2s}_{q\infty}(\R^d)}^\beta
\]
On one hand, we use the embedding $H_{q}^{2s}\hookrightarrow B^{2s}_{q\infty}$ from Lemma \ref{inclBesovBessel}-(iii) and conclude
\[
\|u\|_{B^{1}_{p1}(\R^d)}\leq C_2\|u\|_{B^{\alpha}_{\infty\infty}(\R^d)}^{1-\beta}\|u\|_{H^{2s}_{q}(\R^d)}^\beta\ .
\]
On the other hand, owing to the embedding $B^{1}_{p1}\hookrightarrow H_{p}^{1}$, together with the fact that $B^{\alpha}_{\infty\infty}\simeq C^\alpha$, we conclude
\[
\|u\|_{H^1_p(\R^d)}\leq C_3\|u\|_{C^{\alpha}(\R^d)}^{1-\beta}\|u\|_{H^{2s}_{q}(\R^d)}^\beta
\]
for
\[
\frac{1}{p}=\frac{1}{d}-(1-\beta)\frac{\alpha}{d}+\beta \left(\frac{1}{q}-\frac{2s}{d}\right)
\]
with $\beta\in(0,1)$,
\begin{equation}\label{betaalpha}
1\leq (1-\beta)\alpha+\beta 2s
\end{equation}
and
\[
\alpha\neq 2s-\frac{d}{q}\ .
\]
From \eqref{betaalpha} we deduce $\beta\geq \frac{1-\alpha}{2s-\alpha}$.\end{proof}
\begin{rem}
The above inequality has been obtained in \cite{Miranda,NHolder} when $s=1$, see also \cite{Maugeri}.
\end{rem}

We conclude this section with a fractional Poincar\'e-Wirtinger inequality.
\begin{lemma}\label{PW}
Let $U\subset\R^d$ be a cube. There exists $C=C(d,p,\mu,U)$ such that for $\mu\in(0,1]$ and $p\in(1,\infty)$
\[
\|u-u_U\|_{L^p(U)}\leq C_1[u]_{W^{\mu,p}(U)}\ ,
\]
where $u_U=\fint_{U} u\,dx$ and $[\cdot]_{W^{\mu,p}(U)}$ stands for the Gagliardo seminorm.  As a consequence, when $U=\T$, for $s\in(1/2,1)$ there exists $C_2>0$ such that 
\[
\|u-u_U\|_{L^p(\T)}\leq C_2\|(-\Delta)^{s-\frac12}u\|_{L^p(\T)}\ .
\]
\end{lemma}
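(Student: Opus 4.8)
The plan is to prove the first, local inequality on a cube $U$ and then deduce the periodic consequence by matching the two ingredients. For the first statement, I would start from the elementary pointwise identity
\[
u(x)-u_U=\fint_U\big(u(x)-u(y)\big)\,dy,
\]
raise both sides to the $p$-th power, and apply Jensen's inequality in the $y$-average to get
\[
|u(x)-u_U|^p\le \fint_U|u(x)-u(y)|^p\,dy.
\]
Integrating in $x$ over $U$ and using $|x-y|\le \mathrm{diam}(U)$ to insert the factor $|x-y|^{-(d+\mu p)}$ up to the constant $\mathrm{diam}(U)^{d+\mu p}$, one bounds the right-hand side by a constant (depending on $d,p,\mu,U$) times
\[
\iint_{U\times U}\frac{|u(x)-u(y)|^p}{|x-y|^{d+\mu p}}\,dx\,dy=[u]_{W^{\mu,p}(U)}^p,
\]
which is exactly the claim. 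This is completely standard; I would merely cite, e.g., \cite[Theorem 6.9 or Proposition]{DNPV} if a reference is preferred.

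For the periodic consequence, I would take $U=(0,1)^d$ (or a fundamental domain for $\T$), so that the geodesic structure of $\T$ makes $[u]_{W^{s-1/2,p}(\T)}$ comparable to the Gagliardo seminorm on the cube (periodicity only helps: it removes boundary effects). Applying the first part with $\mu=s-\tfrac12\in(0,\tfrac12)$ — which lies in $(0,1)$ precisely because $s\in(1/2,1)$ — gives
\[
\|u-u_\T\|_{L^p(\T)}\le C\,[u]_{W^{s-\frac12,p}(\T)}.
\]
It then remains to control the Gagliardo seminorm $[u]_{W^{s-1/2,p}(\T)}$ by $\|(-\Delta)^{s-\frac12}u\|_{L^p(\T)}$. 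Here I would invoke the identifications recalled in Section \ref{sec;spaces}: since $s-\tfrac12$ is not an integer, $W^{s-\frac12,p}(\T)\simeq B^{s-1/2}_{pp}(\T)$, and by Lemma \ref{inclBesovBessel} one has the continuous embedding $H_p^{s-1/2}(\T)\hookrightarrow B^{s-1/2}_{pp}(\T)$ when $p\ge 2$, while for $1<p\le 2$ the reverse inclusion $B^{s-1/2}_{pp}(\T)\hookrightarrow H_p^{s-1/2}(\T)$ goes the wrong way, so in that regime I would instead argue directly: the seminorm $[u]_{W^{\mu,p}}$ is, by the Fourier-multiplier/Mikhlin characterization of fractional Sobolev spaces on the torus, bounded by $\|(-\Delta)^{\mu/2}u\|_p$ for all $p\in(1,\infty)$ (this is the inequality $[u]_{W^{\mu,p}(\T)}\lesssim \|(-\Delta)^{\mu/2}u\|_p$, valid for $\mu\in(0,1)$), which is exactly what is needed. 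Combining with the norm equivalence $\|u\|_{\mu,p}\simeq\|u\|_p+\|(-\Delta)^{\mu/2}u\|_p$ stated in Section \ref{sec;spaces} closes the argument, since the mean-zero reduction removes the $\|u\|_p$ term.

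The only genuinely delicate point is the last implication, $[u]_{W^{s-1/2,p}(\T)}\lesssim\|(-\Delta)^{s-\frac12}u\|_{L^p(\T)}$, for general $p\in(1,\infty)$: one must be careful that $W^{\mu,p}$ here is the Sobolev–Slobodeckii (Gagliardo) scale, which coincides with the Bessel-potential scale $H_p^\mu$ only up to the Besov corrections in Lemma \ref{inclBesovBessel}, and those corrections point in opposite directions for $p<2$ and $p>2$. I expect this to be handled cleanly by appealing to the Mikhlin–Hörmander multiplier theorem for the operator $(-\Delta)^{(\mu-\eta)/2}$ composed with Riesz-type operators, or simply by citing the classical fact that $(-\Delta)^{\mu/2}$ maps $H_p^{\eta}(\T)$ isometrically onto $H_p^{\eta-\mu}(\T)$ together with $H_p^0=L^p$ and the embedding $H_p^\mu\hookrightarrow W^{\mu,p}$ that holds for every $p\in(1,\infty)$ when $\mu\in(0,1)$ (this last embedding being the Besov/Triebel–Lizorkin inclusion $F^\mu_{p2}\hookrightarrow F^\mu_{pp}$ for $p\ge2$ and its analogue, or more robustly the direct integral estimate). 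In practice I would phrase the periodic part as a one-line consequence: "Since $s-\tfrac12\in(0,\tfrac12)$, Lemma \ref{inclBesovBessel} and the identification $W^{s-1/2,p}(\T)\simeq B^{s-1/2}_{pp}(\T)$ give $\|u-u_\T\|_p\lesssim[u]_{W^{s-1/2,p}(\T)}\lesssim\|u-u_\T\|_{s-1/2,p}\simeq \|(-\Delta)^{s-\frac12}u\|_p$, the last equivalence using that $u-u_\T$ has zero mean."
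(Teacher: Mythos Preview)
Your proof of the first inequality on the cube is correct and more explicit than the paper's, which simply cites \cite[Proposition B.11]{Armstrong} and \cite[Chapter 17]{Leoni}.

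The second part, however, contains an exponent error. In the paper's conventions (Section~\ref{sec;spaces}) the $H_p^\mu$-norm is $\|(I-\Delta)^{\mu/2}u\|_p\simeq \|u\|_p+\|(-\Delta)^{\mu/2}u\|_p$, so the quantity $\|(-\Delta)^{s-\frac12}u\|_{L^p(\T)}$ appearing in the statement is the homogeneous part of the $H_p^{2s-1}$-norm, \emph{not} of the $H_p^{s-1/2}$-norm. When you apply the first inequality with $\mu=s-\tfrac12$ and then invoke $H_p^{s-1/2}\hookrightarrow B^{s-1/2}_{pp}\simeq W^{s-1/2,p}$, what you are actually bounding $[u]_{W^{s-1/2,p}}$ by is $\|(-\Delta)^{(s-1/2)/2}u\|_p$; your concluding equivalence ``$\|u-u_\T\|_{s-1/2,p}\simeq\|(-\Delta)^{s-\frac12}u\|_p$'' is therefore off by a factor of two in the order of the operator. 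The correct target space throughout is $H_p^{2s-1}$, not $H_p^{s-1/2}$.

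The paper's route sidesteps both this slip and the $p\lessgtr 2$ dichotomy you correctly worry about: it invokes the $\epsilon$-loss inclusions $H_p^{\mu+\epsilon}(\T)\subseteq W^{\mu,p}(\T)\subseteq H_p^{\mu-\epsilon}(\T)$, valid for \emph{every} $p\in(1,\infty)$ (cf.\ \cite[Lemma~2.14]{CG1}). One applies the first inequality with any $\mu\in(0,2s-1)$ and then uses $H_p^{2s-1}\subseteq W^{\mu,p}$ (taking $\epsilon=2s-1-\mu>0$) to control the Gagliardo seminorm by the $H_p^{2s-1}$-data, without ever needing the sharp same-index Besov embeddings of Lemma~\ref{inclBesovBessel} and hence without splitting cases on $p$. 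Your argument is easily repaired along these lines once the exponent $2s-1$ replaces $s-\tfrac12$.
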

\begin{proof}
The first inequality can be found in \cite[Proposition B.11]{Armstrong}, \cite[Chapter 17]{Leoni}. The second one follows from the first and the inclusions among Bessel and Sobolev-Slobodeckii spaces $H_p^{\mu+\epsilon}(\T)\subseteq W^{\mu,p}(\T)\subseteq H_p^{\mu-\epsilon}(\T)$, $\epsilon>0$, $\mu\in\R$, cf \cite[Lemma 2.14]{CG1}.
\end{proof}
\subsection{Parabolic spaces: definitions and embeddings}\label{parspaces}
In this section we introduce some functional spaces involving time and space weak derivatives. Let again $\mu\in\R$ and $p \in (1, \infty)$. We denote by $\mathbb{H}_{p}^{\mu}(Q):=L^p(0,T;H_p^{\mu}(\T))$, $Q:=\T\times I$, the space of measurable functions $u:(0,T)\rightarrow H_p^{\mu}(\T)$ endowed with the norm
\begin{equation*}
\norm{u}_{\mathbb{H}_p^{\mu}(Q)}:=\left(\int_0^T\norm{u(\cdot,t)}^p_{H^{\mu}_p(\T)}dt\right)^{\frac1p}\ .
\end{equation*}
We define the space $\H_p^{\mu}(Q)$ as the space of functions $u\in \mathbb{H}_p^{\mu}(Q)$ with $\partial_tu\in(\mathbb{H}_{p'}^{2s-\mu}(Q))'$ equipped with the norm
\begin{equation*}
\norm{u}_{\mathcal{H}_p^{\mu}(Q)}:=\norm{u}_{\mathbb{H}_p^{\mu}(Q)}+\norm{\partial_tu}_{(\mathbb{H}_{p'}^{2s-\mu}(Q))'}\ .
\end{equation*}
We refer the reader to \cite{CG1,CL}. These are natural spaces in the standard parabolic setting $s=1$: when $s=1$ and $\mu=2$ we have $\H_p^2\simeq W^{2,1}_p$, cf \cite{LSU}, see also \cite{k1,k2}, \cite{CT}, \cite[Chapter 6]{BKRS} for further properties in the case $s = 1$. Note that $(\mathbb{H}_{p'}^{2s-\mu}(Q))'$ coincides with $\mathbb{H}_{p}^{\mu-2s}(Q)$. In the sequel we denote by $Q_\tau=\T\times(0,\tau)$ and $Q_{\omega,\tau}=\T\times(\omega,\tau)$.\\
We now recall the following trace result of functions in $\H_p^\mu$ on the hyperplane $t=0$, which extends  \cite[Lemma II.3.4]{LSU} for classical spaces associated to heat PDEs to the fractional framework.
\begin{lemma}\label{trace}
If $u\in\H_p^{\mu}(Q_T)$, $\mu\in\R$ and $p>1$ satisfying $\mu-2s/p>0$, then $u(0)\in W^{\mu-2s/p,p}(\T)$. In addition, the space $\H_p^\mu$ is continuously embedded onto $C([0,T];W^{\mu-\frac{2s}{p}}(\T))$.
\end{lemma}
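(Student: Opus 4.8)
The plan is to reduce the statement to the classical trace theorem for parabolic spaces via interpolation theory, exploiting the fact that the anisotropic spaces $\H_p^\mu(Q_T)$ are, up to a change of the ``parabolic scaling'', of the same nature as the Lions--Magenes--type spaces for evolution equations. First I would recall the abstract framework: if $u\in L^p(0,T;X_1)$ with $\partial_t u\in L^p(0,T;X_0)$ for a Banach couple $(X_0,X_1)$, then the classical trace theorem (see \cite{Lunardi,BL}) gives $u\in C([0,T];(X_0,X_1)_{1-\frac1p,p})$, together with the estimate
\[
\sup_{t\in[0,T]}\|u(t)\|_{(X_0,X_1)_{1-\frac1p,p}}\leq C\big(\|u\|_{L^p(0,T;X_1)}+\|\partial_t u\|_{L^p(0,T;X_0)}\big)\ .
\]
In our situation $X_1=H_p^\mu(\T)$ and $X_0=H_p^{\mu-2s}(\T)$, since $(\mathbb H_{p'}^{2s-\mu}(Q))'=\mathbb H_p^{\mu-2s}(Q)$ as noted in the text. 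So the only thing to compute is the real interpolation space $(H_p^{\mu-2s}(\T),H_p^\mu(\T))_{1-\frac1p,p}$.

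Next I would identify this interpolation space as a Besov/Sobolev--Slobodeckii scale. Since $(I-\Delta)^{(\mu-2s)/2}$ maps $H_p^\mu(\T)$ isometrically onto $H_p^{2s}(\T)$ and $H_p^{\mu-2s}(\T)$ onto $L^p(\T)$ (the lifting property recalled after Lemma \ref{inclstat}), the couple $(H_p^{\mu-2s},H_p^\mu)$ is carried isometrically to $(L^p(\T),H_p^{2s}(\T))$; hence
\[
(H_p^{\mu-2s}(\T),H_p^\mu(\T))_{1-\frac1p,p}\simeq (I-\Delta)^{-(\mu-2s)/2}\big(L^p(\T),H_p^{2s}(\T)\big)_{1-\frac1p,p}\ .
\]
By the standard identification of real interpolation of Bessel potential spaces with Besov spaces, $(L^p(\T),H_p^{2s}(\T))_{\theta,p}\simeq B^{2s\theta}_{pp}(\T)$, and with $\theta=1-\frac1p$ this is $B^{2s-2s/p}_{pp}(\T)$. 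Applying the lifting back and using $B^{\sigma}_{pp}(\T)\simeq W^{\sigma,p}(\T)$ for non-integer $\sigma$ (recorded in Section \ref{sec;spaces}), one obtains $(H_p^{\mu-2s},H_p^\mu)_{1-\frac1p,p}\simeq W^{\mu-2s/p,p}(\T)$, which is exactly the claimed trace space; the hypothesis $\mu-2s/p>0$ guarantees the index is positive so that the space is well defined and the embedding into distributions makes sense. Combining with the abstract trace theorem yields both $u(0)\in W^{\mu-2s/p,p}(\T)$ and the continuous embedding $\H_p^\mu(Q_T)\hookrightarrow C([0,T];W^{\mu-2s/p,p}(\T))$.

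The main obstacle I anticipate is purely bookkeeping rather than conceptual: one must be careful that the abstract trace theorem is stated for $L^p$-in-time with the \emph{same} exponent in the interpolation parameter, and that the lifting operator $(I-\Delta)^{(\mu-2s)/2}$ commutes with everything in sight (it does, being a Fourier multiplier independent of $t$), so that it intertwines the time derivative as well; this is what lets us transfer the whole anisotropic structure. A secondary point is the transition from $\R^d$ to the torus $\T$: all the cited interpolation identities for Besov and Bessel potential spaces hold on $\T$ as well (cf. \cite{ST,BL,Lunardi}), or alternatively one uses a periodic extension argument as in \cite[Lemma 2.5]{CG1}. Finally, one should remark that when $s=1$ and $\mu=2$ this recovers precisely \cite[Lemma II.3.4]{LSU}, namely the trace $u(0)\in W^{2-2/p,p}(\T)$ for $u\in W^{2,1}_p(Q_T)$, which serves as a consistency check.
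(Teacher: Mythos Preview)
Your proposal is correct and follows essentially the same route as the paper: both invoke the abstract trace theorem for $L^p(0,T;X_1)\cap W^{1,p}(0,T;X_0)$ (this is precisely \cite[Corollary 1.14]{Lunardi}, cited in the paper) and then identify the real interpolation space $(H_p^{\mu-2s},H_p^\mu)_{1-1/p,p}$ with $W^{\mu-2s/p,p}(\T)$. The only cosmetic difference is that you compute the interpolation space via the lifting $(I-\Delta)^{(\mu-2s)/2}$ reducing to $(L^p,H_p^{2s})_{1-1/p,p}\simeq B^{2s-2s/p}_{pp}$, whereas the paper appeals to the Reiteration Theorem together with domain properties of $(-\Delta)^s$; these are equivalent manipulations.
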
 
\begin{proof}
The first statement is a consequence of \cite[Corollary 1.14]{Lunardi}, the embedding properties for the domain of the fractional Laplacian $D(-(-\Delta)^s)$ and the Reiteration Theorem in interpolation theory. The second fact can be deduced again by \cite[Corollary 1.14]{Lunardi}, see also \cite[Theorem III.4.10.2]{Amann} and \cite{CG5} for the case $s=1$.
\end{proof}
We now recall some fractional parabolic embedding theorems partially proved in \cite{CG1} and in \cite{TesiAle}. We adapt the interpolation approach proposed in \cite{k1,k2}.
\begin{lemma}\label{embs}
\begin{itemize}
\item[(i)] If $1<p<\frac{d+2s}{\mu}$, then $\H_p^{\mu}(Q_T)$ is continuously embedded onto $L^q(Q_T)$ for $1\leq q\leq \frac{(d+2s)p}{d+2s-\mu p}$.
\item[(ii)] For $p\geq 2$ the parabolic space $\H_p^{2s-1}(Q_T)$ is continuously embedded into $L^\delta(0,T;W^{\alpha,\delta}(\T))$, where $\delta>p$ and
\[
\alpha=2s-1+\frac{d+2s}{\delta}-\frac{d+2s}{p}\ ,
\]
while for $p\in(1,2]$ we have the embedding of $\H_p^{2s-1}(Q_T)$ into $L^\delta(0,T;H^{\alpha}_{\delta}(\T))$ with $\delta>p$ and $\alpha$ as above.
\item[(iii)] If $p>\frac{d+2s}{\mu}$, then $\H_p^{2s-1}(Q_T)$ is continuously embedded onto $C^{\alpha,\alpha/2s}$ for some $\alpha\in(0,1)$. Moreover, the space $\H_p^{2s}(Q_T)$ is continuously embedded onto $C([0,T];C^{2s-\frac{d+2s}{p}}(\T))$.
\end{itemize}
\end{lemma}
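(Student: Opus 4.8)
\emph{Proof strategy.} The plan is to reduce all three assertions to (an)isotropic Sobolev--Morrey embeddings by viewing $\H_p^{\mu}(Q_T)$ as an anisotropic Bessel potential space on $\R\times\R^d$ in which the time variable carries scaling weight $2s$, so that the underlying space behaves as one of homogeneous dimension $d+2s$; the three exponents are then read off from the corresponding parabolic embeddings. First I would pass to the whole space: periodicity in $x$ is removed by the usual bounded periodic extension/restriction (as in \cite[Lemma 2.5]{CG1}, \cite[Lemma A.3]{CG5}), the time interval $(0,T)$ is extended to $\R$ by a bounded parabolic extension operator on the half-cylinder --- with the initial trace controlled, when $\mu>2s/p$, by Lemma \ref{trace} --- and on the full space maximal $L^p$-regularity for $\partial_t+(-\Delta)^s$ together with the mixed-derivative theorem, or equivalently complex interpolation $\H_p^{\mu}=[\,\mathbb{H}_p^{0},\H_p^{2s}\,]_{\mu/2s}$, identifies $\H_p^{\mu}$ with the anisotropic Bessel scale; this is the adaptation of the interpolation scheme of \cite{k1,k2} alluded to above. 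An equivalent and in practice cleaner route is the Duhamel representation $u(t)=e^{-t(-\Delta)^s}u(0)+\int_0^t e^{-(t-\tau)(-\Delta)^s}\big(\partial_t u+(-\Delta)^s u\big)(\tau)\,d\tau$, which makes sense on $Q_T$ itself since $\partial_t u+(-\Delta)^s u\in\mathbb{H}_p^{\mu-2s}(Q_T)$, combined with the smoothing bounds $\|(-\Delta)^{\beta}e^{-t(-\Delta)^s}\|_{L^p\to L^{q}}\lesssim t^{-\beta/s-\frac{d}{2s}(\frac1p-\frac1q)}$ and the Hardy--Littlewood--Sobolev inequality in $t$.

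For (i), once in this framework a gain of $\mu$ parabolic derivatives in homogeneous dimension $d+2s$ gives the embedding $L^p\to L^{q}$ with $\tfrac1q=\tfrac1p-\tfrac{\mu}{d+2s}$, that is $q=\tfrac{(d+2s)p}{d+2s-\mu p}$, precisely in the range $\mu p<d+2s$; this follows from the Duhamel representation and Hardy--Littlewood--Sobolev, or from the interpolation identification and the endpoint embeddings of $\mathbb{H}_p^{0}$ and $\H_p^{2s}$. The top exponent is attained, and since $Q_T$ has finite measure the full range $1\le q\le\tfrac{(d+2s)p}{d+2s-\mu p}$ follows at once.

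For (ii), rather than converting all of the smoothness into integrability I would keep a fraction of it: trading regularity against integrability in the $(d+2s)$-dimensional parabolic metric shows that, for $\delta>p$, the space $\H_p^{2s-1}(Q_T)$ embeds into a parabolic space with spatial smoothness $\alpha$ and integrability $\delta$ in both $x$ and $t$ whenever $2s-1-\alpha=(d+2s)\big(\tfrac1p-\tfrac1\delta\big)$, i.e. exactly $\alpha=2s-1+\tfrac{d+2s}{\delta}-\tfrac{d+2s}{p}$; again this drops out of the Duhamel formula keeping $\alpha$ derivatives and the $L^p\to L^{\delta}$ smoothing bounds. The target produced naturally by the Triebel--Lizorkin/interpolation machinery is the Bessel potential space $H_{\delta}^{\alpha}(\T)$; when $p\ge2$, so that $\delta>p\ge2$, the inclusions of Lemma \ref{inclBesovBessel}(ii) let one state it in the weaker Sobolev--Slobodeckii form $W^{\alpha,\delta}(\T)=B^{\alpha}_{\delta\delta}(\T)$, whereas for $1<p\le2$ one keeps the Bessel potential space, as in the statement.

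For (iii), the supercritical case $\mu p>d+2s$ is the parabolic Morrey embedding: having $\mu-\tfrac{d+2s}{p}$ parabolic derivatives to spare yields H\"older continuity with respect to $|x-y|+|t-\tau|^{1/(2s)}$ with some exponent $\alpha\in(0,1)$, equal to $\mu-\tfrac{d+2s}{p}$ when this quantity is below $1$, hence $\H_p^{2s-1}(Q_T)\hookrightarrow C^{\alpha,\alpha/2s}$ --- obtainable either from Lemma \ref{trace} combined with the stationary embedding Lemma \ref{inclstat}(ii) and interpolation, or from the semigroup representation. For the last assertion, Lemma \ref{trace} with $\mu=2s$ gives $\H_p^{2s}(Q_T)\hookrightarrow C([0,T];W^{2s-2s/p,p}(\T))$, and since $p>\tfrac{d+2s}{2s}$ forces $2s-\tfrac{2s}{p}-\tfrac{d}{p}=2s-\tfrac{d+2s}{p}>0$ (and may be assumed non-integer), Lemma \ref{inclstatW}(ii) yields $W^{2s-2s/p,p}(\T)\hookrightarrow C^{2s-\frac{d+2s}{p}}(\T)$, whence $\H_p^{2s}(Q_T)\hookrightarrow C([0,T];C^{2s-\frac{d+2s}{p}}(\T))$. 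The main obstacle is the first step --- building the bounded parabolic time-extension $\H_p^{\mu}(Q_T)\to\H_p^{\mu}(\R\times\T)$ on the bounded cylinder (where Lemma \ref{trace} is indispensable, in the regime $\mu>2s/p$, to even make the extension well defined) and verifying the anisotropic identification --- together with pinning down the sharp exponents and endpoints of the anisotropic embedding, in particular the Besov-versus-Bessel bookkeeping in (ii) and the capped H\"older exponent in (iii); the finite measure of $Q_T$ is genuinely used only for the sub-range $1\le q<p$ in (i).
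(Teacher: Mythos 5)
Your strategy is workable but follows a genuinely different route from the paper's. The paper never extends in time nor identifies $\H_p^{\mu}(Q_T)$ with an anisotropic Bessel scale on $\R\times\T$: item (i), endpoint included, is proved intrinsically by interpolating for a.e.\ $t$ the spatial norm between the trace space $W^{\mu-2s/p,p}(\T)$ (whose supremum in time is controlled through the embedding of Lemma \ref{trace}) and the $H_p^{\mu}$-norm, then applying the stationary embeddings of Lemmas \ref{inclstat} and \ref{inclstatW} and closing with Young's inequality; this is the scheme of \cite{k1,k2} that you mention only in passing. Item (ii) is literally the same computation with $q=\delta$, $\theta=p/\delta$, and item (iii) is the trace embedding composed with the stationary Morrey embedding, together with a citation of \cite{CG1}. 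Your route (time extension plus anisotropic identification, or Duhamel plus semigroup smoothing and Hardy--Littlewood--Sobolev) buys a more systematic framework, but it leaves the heaviest steps unproved: the bounded time extension and the mixed-derivative identification at the low regularity $\partial_t u\in\mathbb{H}_p^{\mu-2s}$, and the Besov-versus-Bessel bookkeeping in (ii). On the latter, your claim that the ``natural'' target is $H^{\alpha}_{\delta}(\T)$ for all $p$, weakened afterwards for $p\ge2$, glosses exactly the point that forces the dichotomy in the statement: for $p\ge2$ the trace space $W^{\mu-2s/p,p}=B^{\mu-2s/p}_{pp}$ sits \emph{above} $H_p^{\mu-2s/p}$ (Lemma \ref{inclBesovBessel}), so the interpolation must be run in the Slobodeckii scale and only the $W^{\alpha,\delta}$ conclusion comes out, whereas for $p\le2$ the inclusion reverses and the Bessel target is legitimate.

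One concrete place where your Duhamel alternative, as written, does not close is the endpoint $q=\frac{(d+2s)p}{d+2s-\mu p}$ in (i). Hardy--Littlewood--Sobolev handles the inhomogeneous term, but for the initial-data term the smoothing bound applied to $e^{-t(-\Delta)^s}u(0)$ with $u(0)\in W^{\mu-2s/p,p}(\T)$ yields decay exactly $t^{-1/q}$, which just fails to be $q$-integrable near $t=0$; reaching the endpoint for this contribution requires the real-interpolation (trace-space) characterization of the semigroup orbit, or an argument of the paper's type, rather than pointwise decay. The paper's proof avoids the issue altogether because the trace norm enters only through its supremum in time and the critical exponent is recovered by Young's inequality. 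So: correct in outline and consistent in all exponents, but the extension/identification step, the $p\gtrless2$ bookkeeping in (ii), and the endpoint of (i) need to be supplied before this counts as a proof.
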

\begin{proof}
(i) is proven in \cite[Proposition 2.11]{CG1} for $1\leq q<\frac{(d+2s)p}{d+2s-\mu p}$. A slight modification of that proof allows even to prove the endpoint case $q=\frac{(d+2s)p}{d+2s-\mu p}$, which we provide below.
Here, we distinguish the cases $1<p\leq 2$ and $2< p<\infty$ in view of the inclusions stated in Lemma \ref{inclBesovBessel}. 
To prove the first case $1<p\leq 2$, we note that for any $\theta \in (0, 1)$, if $\nu = \nu(\theta)=(\mu-2s/p)(1-\theta)+\mu\theta$, then $H^\nu_p$ can be obtained by complex interpolation between $H^{\mu}_p$ and $H^{\mu- 2s/p}_p$ (see, e.g., \cite[Theorem 6.4.5]{BL}). Moreover, $H^\nu_p$ is continuously embedded in $H^{\nu+d/q-d/p}_q$ in view of Lemma \ref{inclstat}. Hence, for a.e. $t$,
\begin{equation*}
c(d, p, s,q)\norm{u(t)}_{\nu-\frac{d}{p}+\frac{d}{q},q}\leq\norm{u(t)}_{\nu,p}\leq \norm{u(t)}_{\mu- 2s/p,p}^{1-\theta}\norm{u(t)}_{\mu,p}^{\theta}.
\end{equation*}
Therefore, for all $\alpha \le \nu-\frac{d}{p}+\frac{d}{q} = \mu+\frac{d}{q}-\frac{d+2s(1-\theta)}{p}$,
\begin{multline*}
\left(\int_0^T\norm{u(t)}_{\alpha,q}^{\frac{p}{\theta}}dt\right)^{\theta}\leq
C_1\left(\int_0^T\norm{u(t)}_{\mu-2s/p,p}^{(1-\theta)\frac{p}{\theta}}\norm{u(t)}_{\mu,p}^pdt\right)^{\theta} \\
\leq C_2\left(\int_0^T\norm{u(t)}_{W^{\mu-2s/p,p}(\T)}^{(1-\theta)\frac{p}{\theta}}\norm{u(t)}_{\mu,p}^pdt\right)^{\theta}\ ,
\end{multline*}
where we used that for $1<p\leq 2$, $W^{\mu-2s/p,p}$ is embedded onto $H_p^{\mu-2s/p}$ (cf Lemma \ref{inclBesovBessel}-(i)). Then, the last inequality is less than or equal to
\begin{equation*}
C\sup_{t\in[0, T]}\norm{u(t)}_{W^{\mu-2s/p,p}(\T)}^{(1-\theta)p}\left(\int_0^T\norm{u(t)}_{\mu,p}^pdt\right)^{\theta} \leq C\norm{u}_{\mathcal{H}_p^{\mu}(Q_T)}^{(1-\theta)p}\norm{u(t)}_{\mathbb{H}_p^{\mu}(Q_T)}^{\theta p} \leq C\norm{u}_{\mathcal{H}_p^{\mu}(Q_T)}^{p}\ ,
\end{equation*}
where, in the second inequality we used the embedding in Lemma \ref{trace}
\[
\H_p^{\mu}(Q_T)\hookrightarrow C([0,T];W^{\mu-2s/p,p}(\T))
\]
while, in the last one, Young's inequality.\\
As for the case $p\geq 2$, we interpolate in the Sobolev-Slobodeckii scale. In particular, one uses that $W^{\nu,p}$ can be obtained by real interpolation among $W^{\mu,p}$ and $W^{\mu- 2s/p,p}$. Moreover, $W^{\nu,p}$ is continuously embedded in $W^{\nu+d/q-d/p,q}$ in view of Lemma \ref{inclstatW}-(iii). Hence, for a.e. $t$,
\begin{equation*}
c(d, p, s, q)\norm{u(t)}_{W^{\nu-\frac{d}{p}+\frac{d}{q},q}(\T)}\leq\norm{u(t)}_{W^{\nu,p}(\T)}\leq \norm{u(t)}_{W^{\mu- 2s/p,p}(\T)}^{1-\theta}\norm{u(t)}_{W^{\mu,p}(\T)}^{\theta}.
\end{equation*}
Then, for all $\alpha$ verifying $\alpha \le \nu-\frac{d}{p}+\frac{d}{q} \leq \mu+\frac{d}{q}-\frac{d+2s(1-\theta)}{p}$ we have
\begin{multline*}
\left(\int_0^T\norm{u(t)}_{W^{\alpha,q}(\T)}^{\frac{p}{\theta}}dt\right)^{\theta}\leq C_1\left(\int_0^T\norm{u(t)}_{W^{\nu-\frac{d}{p}+\frac{d}{q},q}(\T)}^{\frac{p}{\theta}}dt\right)^{\theta}\\
\leq
C_2\left(\int_0^T\norm{u(t)}_{W^{\mu- 2s/p,p}(\T)}^{(1-\theta)p}\norm{u(t)}_{W^{\mu,p}(\T)}^p dt \right)^{\theta} \\
 \leq C_3\sup_{t\in[0,T]}\norm{u(t)}_{W^{\mu- 2s/p,p}(\T)}^{(1-\theta)p}\left(\int_0^T\norm{u(t)}_{\mu,p}^p dt\right)^{\theta}
\end{multline*}
where we used that $H_p^{\mu}$ is embedded onto $W^{\mu,p}$ when $p\geq 2$ (see Lemma \ref{inclBesovBessel}). At this stage, one has to use the maximal regularity embedding in Lemma \ref{trace} to get
\[
\H_p^{\mu}(Q_T)\hookrightarrow C([0,T];W^{\mu-2s/p,p}(\T))
\]
and finally conclude the assertion setting $\eta=0$ to get
\[
\left(\int_0^T\norm{u(t)}_{q}^{q}dt\right)^{\frac{p}{q}}\leq
C\norm{u}_{\mathcal{H}_p^{\mu}(Q_T)}^p\ .
\]
Item (ii) is then a consequence of the above computations setting $q=\delta$, $\mu=2s-1$, $\theta=p/q=p/\delta$. 

The last assetion (iii) is a consequence of \cite[Theorem 2.6]{CG1}, while last assertion is a byproduct of the embedding in Lemma \ref{trace} and Lemma \ref{inclstat}, since
\[
\H_p^{2s}(Q_T)\hookrightarrow C([0,T];W^{2s-\frac{2s}{p},p}(\T))\hookrightarrow C([0,T];C^{2s-\frac{d+2s}{p}}(\T))
\]

\end{proof}
\begin{lemma}\label{compact}
Let $1<p<\frac{d+2s}{\mu}$, $\mu\in\R$, $\mu>0$. Then, the space $\H_p^\mu(Q_T)$ is compactly embedded onto $L^q(Q_T)$ for $1\leq q<\frac{(d+2s)p}{d+2s-\mu p}$.
\end{lemma}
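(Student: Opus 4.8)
The plan is to derive the statement from an Aubin--Lions--Simon compactness argument, followed by an interpolation step that enlarges the target exponent. Recall that membership $u\in\H_p^\mu(Q_T)$ means $u\in L^p(0,T;H_p^\mu(\T))$ together with $\partial_t u\in L^p(0,T;H_p^{\mu-2s}(\T))$, since $(\mathbb{H}_{p'}^{2s-\mu}(Q_T))'=\mathbb{H}_p^{\mu-2s}(Q_T)$. First I would fix an auxiliary order $\mu'$ with $\mu-2s<\mu'<\mu$, $\mu'>0$, and $\mu-\mu'<d/p$; such a choice is possible because $\mu>0$ and $2s>0$. With this choice one has the chain $H_p^\mu(\T)\hookrightarrow\hookrightarrow H_p^{\mu'}(\T)\hookrightarrow H_p^{\mu-2s}(\T)$: the first embedding is compact, which follows from the stationary compact embedding of $H_p^{\mu-\mu'}(\T)$ into $L^p(\T)$ established above (applicable since $(\mu-\mu')p<d$) conjugated with the isometric isomorphisms $(I-\Delta)^{\mu'/2}\colon H_p^{\sigma}\to H_p^{\sigma-\mu'}$, while the second embedding is continuous by Lemma \ref{inclstat}-(i). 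Applying the Aubin--Lions--Simon lemma with $X_0=H_p^\mu(\T)$, $B=H_p^{\mu'}(\T)$, $X_1=H_p^{\mu-2s}(\T)$ then yields that $\H_p^\mu(Q_T)$ is compactly embedded into $L^p(0,T;H_p^{\mu'}(\T))$, and hence, since $\mu'>0$ gives $H_p^{\mu'}(\T)\hookrightarrow L^p(\T)$, into $L^p(Q_T)$.

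The second step upgrades the target exponent from $p$ to the full range $q<q^*:=\frac{(d+2s)p}{d+2s-\mu p}$. By Lemma \ref{embs}-(i), $\H_p^\mu(Q_T)$ embeds continuously into $L^{q^*}(Q_T)$. Given a bounded sequence $(u_n)$ in $\H_p^\mu(Q_T)$, the first step provides a subsequence converging in $L^p(Q_T)$; as $(u_n)$ is also bounded in $L^{q^*}(Q_T)$, for every $p\le q<q^*$ the elementary interpolation inequality $\|v\|_{L^q(Q_T)}\le\|v\|_{L^p(Q_T)}^{1-\theta}\|v\|_{L^{q^*}(Q_T)}^{\theta}$, with $\tfrac1q=\tfrac{1-\theta}{p}+\tfrac{\theta}{q^*}$ and $\theta\in(0,1)$, shows that this subsequence is Cauchy, hence convergent, in $L^q(Q_T)$. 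For $1\le q<p$ one instead uses the continuous inclusion $L^p(Q_T)\hookrightarrow L^q(Q_T)$ on the bounded cylinder. Altogether, $\H_p^\mu(Q_T)$ is compactly embedded into $L^q(Q_T)$ for every $1\le q<q^*$.

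The main obstacle is the rigorous implementation of the Aubin--Lions--Simon step in this anisotropic fractional setting, where the ``weak'' space $X_1=H_p^{\mu-2s}(\T)$ has negative smoothness and is merely a space of distributions. One should invoke the abstract form of the compactness theorem (Simon's criterion), which only requires $X_0\hookrightarrow\hookrightarrow B\hookrightarrow X_1$ with $(u_n)$ bounded in $L^p(0,T;X_0)$ and $(\partial_t u_n)$ bounded in $L^p(0,T;X_1)$, combined with the translation estimate $\|u_n(\cdot+h)-u_n(\cdot)\|_{L^p(0,T-h;X_1)}\le h\|\partial_t u_n\|_{L^p(0,T;X_1)}$ and the Ehrling-type (interpolation) inequality $\|v\|_{B}\le\eps\|v\|_{X_0}+C_\eps\|v\|_{X_1}$; equivalently, one may mollify in time and control the error in $B$ using the bound in $L^p(0,T;X_0)$. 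A minor but genuine point, already accounted for above, is that when $\mu\ge 2s$ one has $L^p\not\hookrightarrow H_p^{\mu-2s}$, so the intermediate Bessel space $H_p^{\mu'}$ of positive order cannot be dispensed with. Finally, the strict inequality $q<q^*$ is sharp: the endpoint embedding $\H_p^\mu(Q_T)\hookrightarrow L^{q^*}(Q_T)$ from Lemma \ref{embs}-(i) holds but fails to be compact.
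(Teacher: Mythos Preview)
Your proof is correct and follows essentially the same strategy as the paper: Aubin--Lions--Simon to obtain compactness into $L^p(Q_T)$, followed by interpolation with the continuous endpoint embedding from Lemma~\ref{embs}-(i) to reach all $q<q^*$. The only difference is the choice of intermediate space in the Aubin--Lions triple: the paper first reduces to $\mu\in(0,2s]$ via the isometry of $(I-\Delta)^{\mu/2}$ and then takes $B=L^p(\T)$ directly (using $L^p(\T)\hookrightarrow (H_{p'}^{2s-\mu}(\T))'$, which holds precisely because $\mu\le 2s$), whereas you take $B=H_p^{\mu'}(\T)$ with a carefully chosen $\mu'\in(\max\{0,\mu-2s,\mu-d/p\},\mu)$; your choice avoids the case split at the cost of an extra embedding $H_p^{\mu'}\hookrightarrow L^p$ at the end, and both lead to the same conclusion.
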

\begin{proof}
To show the compactness, we restrict to consider the case $\mu\in(0,2s]$, the general case being consequence of the isometry property of the operator $(I-\Delta)^{\frac{\mu}{2}}$ on spaces of Bessel potentials. The idea is to exploit the so-called Aubin-Lions-Simon Lemma. Let $\mu\in\R$ and $0<\mu\leq 2s$ with $p$ satisfying $1<p<\frac{d+2s}{\mu}$. Note first that $H_{p'}^{\mu}(\T)$ is reflexive and separable. Therefore the space $L^p(0,T;(H_{p'}^{\mu}(\T))')$ is isomorphic to $(L^{p'}(0,T;H_{p'}^{\mu}(\T)))'\equiv (\mathbb{H}_{p'}^{\mu}(Q_T))'$. One can easily see that, by definition, $\H_{p}^{\mu}(Q_T)$ is isomorphic to
\begin{equation*}
E:=\{u\in L^p(0,T;H_p^{\mu}(\T)), \partial_t u\in L^p(0,T;(H_{p'}^{2s-\mu}(\T))'\}\ .
\end{equation*}
Note also that $H_p^{\mu}(\T)$ is compactly embedded into $L^p(\T)$ by Lemma \ref{inclstat}-(iv) and $L^p(\T)$ is continuously embedded in $(H_{p'}^{2s-\mu}(\T))'$ since $\mu\leq 2s$. Then, Aubin-Lions-Simon Lemma (see \cite{Simon} and \cite[Proposition III.1.3]{Showalter}) implies that $E$ is compactly embedded into $L^p(Q_T)$. Hence $\mathcal{H}_p^{\mu}(Q_T)$ is compactly embedded in $L^q(Q_T)$ for any $1\leq q\leq p$. Let $u_n$ be a bounded sequence in $\mathcal{H}_p^{\mu}(Q_T)$. By the previous discussion we may extract a subsequence $u_{n_k}$ converging to $u$ strongly in $L^p(Q_T)$. For any $p<q<\frac{(d+2s)p}{d+2s-\mu p}$, arguing by interpolation, we may assert the existence of $0<\theta<1$ such that
\begin{equation*}
\norm{u_{n_k}-u_{n_j}}_{L^q(Q_T)}\leq\norm{u_{n_k}-u_{n_j}}^{\theta}_{L^p(Q_T)}\norm{u_{n_k}-u_{n_j}}^{1-\theta}_{L^{\frac{(d+2s)p}{d+2s-\mu p}}}\rightarrow0
\end{equation*}
as $j,k\rightarrow+\infty$, since $u_{n_k}$ belongs to $\mathcal{H}_p^{\mu}(Q_T)$, which is in turn continuously embedded onto $L^{\frac{(d+2s)p}{d+2s-\mu p}}$ in view of Lemma \ref{embs}, so $u_{n_k}$ converges strongly also in $L^q(Q_T)$.
\end{proof}
We now recall a maximal regularity theorem for fractional heat equations. Consider the problem
\begin{equation}\label{fracreghol}
\begin{cases}
\partial_tu+(-\Delta)^su=f(x,t)&\text{ in }Q_T\ ,\\
u(x,0)=u_0(x)&\text{ in }\T\ .
\end{cases}
\end{equation}
We have the following result for strong solutions to \eqref{fracreghol}, i.e. $u\in \H_q^{2s}$, the equation is solved a.e. and $u(0)$ is meant in the sense of traces.
\begin{thm}\label{fracregtor} Let $p > 1$. Suppose that $u \in \mathcal{H}_p^{\mu}(Q_T)$ solves \eqref{fracreghol}. Then, every strong solution to \eqref{fracreghol} verifies 
\[
\|u\|_{\mathcal{H}_p^{\mu}(Q_T)} \le C(\|f\|_{\mathbb{H}_p^{\mu-2s}(Q_T)} + \|u_0\|_{W^{\mu - 2s/p,p}(\T)}).
\]
where $C > 0$ depends on $d, T, p, s$ (but remains bounded for bounded values of $T$).
\end{thm}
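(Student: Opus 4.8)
The plan is to reduce the statement to the single case $\mu=2s$ (so that $u\in\H_p^{2s}(Q_T)$ and $f\in L^p(Q_T)$) and then invoke the abstract $L^p$-maximal regularity theory for the fractional heat semigroup. For the reduction I would use that the Bessel lifting $(I-\Delta)^{\sigma/2}$ is an isometric isomorphism of $H_p^{\eta+\sigma}(\T)$ onto $H_p^{\eta}(\T)$ for all $\eta,\sigma\in\R$ and commutes with both $\partial_t$ and $(-\Delta)^s$, being a Fourier multiplier. Hence $v:=(I-\Delta)^{(\mu-2s)/2}u$ lies in $\H_p^{2s}(Q_T)$ with $\|v\|_{\H_p^{2s}(Q_T)}\simeq\|u\|_{\H_p^{\mu}(Q_T)}$, solves \eqref{fracreghol} with datum $g:=(I-\Delta)^{(\mu-2s)/2}f$ and initial value $v_0:=(I-\Delta)^{(\mu-2s)/2}u_0$, and one has $\|g\|_{L^p(Q_T)}=\|f\|_{\mathbb{H}_p^{\mu-2s}(Q_T)}$; moreover, since the trace space of $\H_p^{\mu}$ at $t=0$ is the real interpolation space $(H_p^{\mu-2s}(\T),H_p^{\mu}(\T))_{1-1/p,p}=W^{\mu-2s/p,p}(\T)$ (this is the content behind Lemma \ref{trace}), the lifting carries it isometrically onto $(L^p(\T),H_p^{2s}(\T))_{1-1/p,p}=W^{2s-2s/p,p}(\T)$, so $\|v_0\|_{W^{2s-2s/p,p}(\T)}=\|u_0\|_{W^{\mu-2s/p,p}(\T)}$. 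It therefore suffices to prove the bound for $v$ in the case $\mu=2s$.

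In that case I would split $v=w+z$, where $w(t)=S(t)v_0$ with $S(t)=e^{-t(-\Delta)^s}$ the fractional heat semigroup on $L^p(\T)$ ($p\in(1,\infty)$) and $z(t)=\int_0^tS(t-r)g(r)\,dr$ the Duhamel solution with zero initial datum. On the torus $(-\Delta)^s$ is the Fourier multiplier with symbol $(4\pi^2|k|^2)^s$, $k\in\Z^d$; it is sectorial of angle $0$ and admits a bounded $H^\infty$-calculus --- either as the fractional power of $-\Delta$, which is sectorial of angle $0$, or by checking Mikhlin-type bounds on its resolvents on $\Z^d$ --- and $L^p(\T)$ is a UMD space, so $(-\Delta)^s$ has maximal $L^p$-regularity on finite time intervals. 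By this abstract theorem (cf.\ \cite{HP,Lamberton,Lunardi}), $\|\partial_tz\|_{L^p(Q_T)}+\|(-\Delta)^sz\|_{L^p(Q_T)}\le C\|g\|_{L^p(Q_T)}$ with $C=C(d,p,s,T)$ bounded for bounded $T$; together with $\|z(t)\|_{L^p(\T)}\le\int_0^t\|S(t-r)\|_{\mathcal{L}(L^p(\T))}\|g(r)\|_{L^p(\T)}\,dr\le C_T\|g\|_{L^p(Q_T)}$ and the norm equivalence $\|\cdot\|_{\H_p^{2s}}\simeq\|\cdot\|_{\mathbb{H}_p^{2s}}+\|\partial_t\cdot\|_{L^p(Q_T)}$, this gives $\|z\|_{\H_p^{2s}(Q_T)}\le C\|g\|_{L^p(Q_T)}$. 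For $w$ I would use that the solution operator $v_0\mapsto S(\cdot)v_0$ of the homogeneous problem maps the trace space $(L^p(\T),H_p^{2s}(\T))_{1-1/p,p}=W^{2s-2s/p,p}(\T)$ boundedly into $\H_p^{2s}(Q_T)$ --- this is exactly the extension counterpart of Lemma \ref{trace}, see \cite[Corollary 1.14]{Lunardi} and \cite[Theorem III.4.10.2]{Amann} --- so $\|w\|_{\H_p^{2s}(Q_T)}\le C\|v_0\|_{W^{2s-2s/p,p}(\T)}$. Adding the two estimates yields the bound for $v$, and undoing the lifting gives the assertion for general $\mu$; since the difference of two solutions in $\H_p^{\mu}(Q_T)$ solves \eqref{fracreghol} with $f\equiv0$, $u_0\equiv0$ and is thus represented by $0$, the solution is unique and the estimate holds for every (strong) solution.

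The genuinely routine parts are the lifting bookkeeping and the invocation of the semigroup and maximal regularity facts, which for $(-\Delta)^s$ on $L^p(\T)$ are classical for all $p\in(1,\infty)$. The one point deserving care is the identification of the trace/extension space: one must make sure that the correct space of initial data for $\H_p^{2s}(Q_T)$ is precisely $W^{2s-2s/p,p}(\T)=(L^p(\T),H_p^{2s}(\T))_{1-1/p,p}$ and that, under the lifting, this matches the space $W^{\mu-2s/p,p}(\T)$ appearing in the statement. This is where the real-interpolation description of $\H_p^{\mu}$ already employed in Lemma \ref{trace} does the work, and I do not expect any further obstacle.
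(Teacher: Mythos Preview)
Your proposal is correct and follows essentially the same route as the paper: reduce to the case $\mu=2s$ via the Bessel lifting $(I-\Delta)^{(\mu-2s)/2}$ (exactly the isometry-of-the-Bessel-operator argument the paper attributes to \cite{k3,CL}), and then invoke abstract maximal $L^p$-regularity for the fractional heat semigroup as in \cite{HP}. The paper's proof is really just a pointer to these references, whereas you have spelled out the splitting $v=w+z$ and the trace/extension identification explicitly; the only bookkeeping to double-check is that $(I-\Delta)^{\sigma/2}$ is also an isomorphism on the Besov scale $B^{\mu}_{pp}=W^{\mu,p}$ (so the initial-data norms match), which is indeed standard---the paper itself cites \cite[Theorem 6.2.7]{BL} for this elsewhere.
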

\begin{proof}
The proof is a consequence of well-known results for abstract evolution equations when $\mu=2s$, see e.g. \cite{HP}. The general case can be handled using the isometry of the Bessel operator as in \cite{k3}, and it is proved in \cite{CL}. In particular, in \cite{CL} the proof is provided for stochastic PDEs, which makes necessary the restriction $p>2$. However, for standard PDEs one simply requires $p>1$, as it can be seen in \cite[Lemma 3.2 and Lemma 3.4]{CL}.
\end{proof}

The last part of the section is devoted to present a Sobolev embedding theorem for the parabolic Bessel potential class $\H_p^{2s-1}$ with traces on the hyperplane $t=0$ in $L^1$.
This can be regarded as a nonlocal counterpart of \cite[Proposition A.2]{CG2}. The result is given via the above interpolation theory arguments, although a different proof can be done as in \cite[Appendix A]{CG2} via duality.
\begin{lemma}\label{embL^1frac}
Let $s\in(\frac12,1)$. If $1<\sigma'<(d+2s)/(d+2s-1)$, then $\mathcal{H}_{\sigma'}^{2s-1}(Q_T)$ is continuously embedded onto $L^p(Q_T)$ for
\[
\frac1p=\frac{1}{\sigma'} -\frac{2s-1}{d+2s}.
\] 
Moreover, if $u \in \H_{\sigma'}^{2s-1}(Q_T)$ and $u(\cdot, 0) \in L^1(\T)$, we have
\begin{equation}\label{immL1}
\norm{u}_{L^p(Q_T)}\leq C\big(\norm{u}_{\H_{\sigma'}^{2s-1}(Q_T)}+\norm{u(0)}_{L^1(\T)}\big)\ ,
\end{equation}
where the constant $C$ depends on $d,p,\sigma',T$, but remains bounded for bounded values of $T$.
\end{lemma}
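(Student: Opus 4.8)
The plan is to reduce the quantitative bound \eqref{immL1} to a heat-kernel/maximal-regularity estimate by splitting off the initial datum. The continuous embedding $\H_{\sigma'}^{2s-1}(Q_T)\hookrightarrow L^p(Q_T)$ with $\frac1p=\frac1{\sigma'}-\frac{2s-1}{d+2s}$ is the endpoint case of Lemma~\ref{embs}(i) applied with $\mu=2s-1$ and integrability exponent $\sigma'$ (note $\sigma'<\frac{d+2s}{d+2s-1}<\frac{d+2s}{2s-1}$, so we are in the admissible range and the endpoint exponent $\frac{(d+2s)\sigma'}{d+2s-(2s-1)\sigma'}$ equals $p$); so I focus on \eqref{immL1}. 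Given $u\in\H_{\sigma'}^{2s-1}(Q_T)$ with $u_0:=u(\cdot,0)\in L^1(\T)$, write $u=w+v$, where $w(\cdot,t):=e^{-t(-\Delta)^s}u_0$ solves the fractional heat equation with datum $u_0$ and no source, and $v:=u-w$ solves $\partial_tv+(-\Delta)^sv=g$ in $Q_T$ with $v(\cdot,0)=0$, $g:=\partial_tu+(-\Delta)^su$. Since $\partial_tu\in\mathbb{H}_{\sigma'}^{-1}(Q_T)$ by the definition of $\H_{\sigma'}^{2s-1}(Q_T)$ and $(-\Delta)^s$ maps $H_{\sigma'}^{2s-1}(\T)$ boundedly into $H_{\sigma'}^{-1}(\T)$, one has $g\in\mathbb{H}_{\sigma'}^{-1}(Q_T)=\mathbb{H}_{\sigma'}^{(2s-1)-2s}(Q_T)$ with $\|g\|_{\mathbb{H}_{\sigma'}^{-1}(Q_T)}\lesssim\|u\|_{\H_{\sigma'}^{2s-1}(Q_T)}$.

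For $w$ I invoke the smoothing bound $\|w(\cdot,t)\|_{L^p(\T)}\le C\big(1+t^{-\frac d{2s}(1-\frac1p)}\big)\|u_0\|_{L^1(\T)}$, which follows from the torus fractional heat-kernel estimates (for $t\le 1$ these coincide with the Euclidean ones). Raising to the power $p$ and integrating over $(0,T)$, the singularity at $t=0$ is integrable precisely when $\frac d{2s}(p-1)<1$, i.e. $p<\frac{d+2s}d$, which by the definition of $p$ is \emph{exactly} the hypothesis $\sigma'<\frac{d+2s}{d+2s-1}$; hence $\|w\|_{L^p(Q_T)}\le C(T)\|u_0\|_{L^1(\T)}$, $C(T)$ bounded for bounded $T$. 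For $v$ I use Duhamel's formula $v(\cdot,t)=\int_0^te^{-(t-\tau)(-\Delta)^s}g(\cdot,\tau)\,d\tau$ together with the smoothing bound $\|e^{-\tau(-\Delta)^s}g\|_{L^p(\T)}\lesssim\tau^{-\beta}\|g\|_{H_{\sigma'}^{-1}(\T)}$ for $\tau\in(0,T]$, where $\beta=\frac1{2s}+\frac d{2s}\big(\frac1{\sigma'}-\frac1p\big)=\frac{d+1}{d+2s}\in(0,1)$ (gain one derivative, then use $L^{\sigma'}$–$L^p$ smoothing); the one-dimensional Hardy--Littlewood--Sobolev inequality in the time variable then gives $\|v\|_{L^p(Q_T)}\lesssim\|g\|_{\mathbb{H}_{\sigma'}^{-1}(Q_T)}\lesssim\|u\|_{\H_{\sigma'}^{2s-1}(Q_T)}$, the exponent relation $\frac1p=\frac1{\sigma'}-(1-\beta)=\frac1{\sigma'}-\frac{2s-1}{d+2s}$ being exactly the one in the statement. (Alternatively, since $v(\cdot,0)=0$, maximal regularity of Theorem~\ref{fracregtor} gives $v\in\H_{\sigma'}^{2s-1}(Q_T)$ with $\|v\|_{\H_{\sigma'}^{2s-1}(Q_T)}\lesssim\|g\|_{\mathbb{H}_{\sigma'}^{-1}(Q_T)}$, and one concludes with the first part of the lemma.) Adding the two bounds yields $\|u\|_{L^p(Q_T)}\le\|w\|_{L^p(Q_T)}+\|v\|_{L^p(Q_T)}\le C\big(\|u\|_{\H_{\sigma'}^{2s-1}(Q_T)}+\|u(0)\|_{L^1(\T)}\big)$, i.e. \eqref{immL1}.

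The main — and essentially the only — obstacle is the time-integrability of the $L^1\!\to\!L^p$ smoothing estimate for $w$, which is exactly what pins down the restriction $\sigma'<(d+2s)/(d+2s-1)$; apart from this the only point needing care is the (standard) fact that a solution $u\in\H_{\sigma'}^{2s-1}(Q_T)$ genuinely admits the Duhamel decomposition $u=w+v$ used above, which is checked Fourier mode by Fourier mode, each $\widehat u(k,\cdot)$ being absolutely continuous on $[0,T]$. Finally, an equivalent derivation in the spirit of \cite[Appendix~A]{CG2} is by duality: pairing $u$ with $\varphi\in L^{p'}(Q_T)$ and solving the backward problem $-\partial_t\psi+(-\Delta)^s\psi=\varphi$, $\psi(\cdot,T)=0$, integration by parts gives $\iint_{Q_T}u\varphi=\langle g,\psi\rangle_{Q_T}+\int_{\T}u_0\,\psi(\cdot,0)$, and the two terms are bounded by $\|g\|_{\mathbb{H}_{\sigma'}^{-1}(Q_T)}\|\psi\|_{\mathbb{H}_\sigma^1(Q_T)}$ and $\|u_0\|_{L^1(\T)}\|\psi\|_{C([0,T];L^\infty(\T))}$, using maximal regularity $\|\psi\|_{\H_{p'}^{2s}(Q_T)}\lesssim\|\varphi\|_{L^{p'}(Q_T)}$ together with parabolic embeddings as in Lemma~\ref{embs} — the embedding $\H_{p'}^{2s}(Q_T)\hookrightarrow C([0,T];L^\infty(\T))$ being valid precisely because $p'>\tfrac{d+2s}{2s}$, which is once more the hypothesis on $\sigma'$.
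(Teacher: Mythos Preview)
Your proof is correct, but the route differs from the paper's. The paper's argument is a two-liner: the embedding $\H_{\sigma'}^{2s-1}(Q_T)\hookrightarrow L^p(Q_T)$ is read off directly from Lemma~\ref{embs}(i), and \eqref{immL1} then follows from the observation that the trace space $W^{2s-1-2s/\sigma',\sigma'}(\T)$ has \emph{negative} regularity index under the hypothesis $\sigma'<(d+2s)/(d+2s-1)$, so that $L^1(\T)\hookrightarrow W^{2s-1-2s/\sigma',\sigma'}(\T)$ by duality (this is the same computation as in Remark~\ref{diffproof}).

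You instead give a constructive proof via the Duhamel splitting $u=w+v$, handling the initial-data piece $w$ by the $L^1\!\to\!L^p$ heat-kernel smoothing (where the threshold $\sigma'<(d+2s)/(d+2s-1)$ reappears as the time-integrability condition $\tfrac{d}{2s}(p-1)<1$) and the zero-trace piece $v$ by maximal regularity plus the embedding (or, alternatively, HLS in time). This is more explicit and sidesteps the abstract trace theory; in fact it is arguably more robust, since the paper's short proof leans on Lemma~\ref{embs}(i), whose own proof uses Lemma~\ref{trace} under the condition $\mu-2s/\sigma'>0$, which \emph{fails} here --- one tacitly needs the negative-index extension of the trace embedding, which your argument avoids altogether. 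The duality argument you sketch at the end is precisely the ``different proof \dots as in \cite[Appendix~A]{CG2}'' that the paper alludes to just before stating the lemma.
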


\begin{proof}
The result is a consequence of Lemma \ref{embs}-(i) with $p=\sigma'$, $\mu=2s-1$ and the fact that
\[
\|u(0)\|_{W^{2s-1-2s/\sigma',\sigma'}(\T)}\leq \tilde C\|u(0)\|_{L^1(\T)}
\]
for some positive constant $\tilde C>0$ provided that $\sigma>d+2s$, i.e. $\sigma'<\frac{d+2s}{d+2s-1}(<2)$.
\end{proof}

\begin{lemma}\label{embnik}
Let $\mu>0$ and $1\leq p<\infty$. Then $W^{\mu,p}(\T)\subseteq N^{\mu,p}(\T)$ with continuous embedding. In particular, the space $L^p(I;W^{\mu,p}(\T))\subseteq  L^p(I;N^{\mu,p}(\T))$ with continuous inclusion, where $I\subset\R$. Similarly, we have $H_p^\mu(\T)\subseteq N^{\mu,p}(\T)$ and hence $L^p(I;H^{\mu}_p(\T))\subseteq  L^p(I;N^{\mu,p}(\T))$.
\end{lemma}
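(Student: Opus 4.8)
The plan is to establish $W^{\mu,p}(\T)\subseteq N^{\mu,p}(\T)$ directly from the definitions of the respective norms. Recall that for $\mu\in(0,1)$ the Sobolev--Slobodeckii norm is
\[
\|u\|_{W^{\mu,p}(\T)}=\|u\|_{L^p(\T)}+\left(\int_{\T}\int_{\T}\frac{|u(x)-u(y)|^p}{\mathrm{dist}(x,y)^{d+\mu p}}\,dx\,dy\right)^{1/p},
\]
while the Nikol'skii norm is $\|u\|_{N^{\mu,p}(\T)}=\|u\|_{L^p(\T)}+\sup_{|h|>0}|h|^{-\mu}\|u(\cdot+h)-u(\cdot)\|_{L^p(\T)}$. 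First I would fix $h$ with $0<|h|$ and bound $|h|^{-\mu p}\int_\T|u(x+h)-u(x)|^p\,dx$ by an average of the Gagliardo integrand over a neighbourhood of the diagonal: for each $x$, writing $\mathrm{dist}(x,x+h)\lesssim|h|$, one integrates the inequality $|u(x+h)-u(x)|^p/|h|^{d+\mu p}\le C\,|u(x+h)-u(x)|^p/\mathrm{dist}(x,x+h)^{d+\mu p}$ over $h$ in a ball $B_r$ and over $x\in\T$, then compares with the full double integral; a standard change of variables $y=x+h$ shows
\[
\int_\T\int_{B_r}\frac{|u(x+h)-u(x)|^p}{|h|^{d+\mu p}}\,dh\,dx\le C\,[u]_{W^{\mu,p}(\T)}^p,
\]
and the left-hand side dominates $c\,r^{d}\sup_{|h|\le r}|h|^{-\mu p}\|u(\cdot+h)-u(\cdot)\|_{L^p}^p$ after one more averaging argument (or, more cleanly, one simply uses the continuity of $h\mapsto\|u(\cdot+h)-u(\cdot)\|_{L^p}$ and the monotone-type estimate to conclude the $\sup$ over small $h$ is controlled); for $|h|\gtrsim r$ the difference is bounded trivially by $2\|u\|_{L^p}$, and since $|h|^{-\mu}$ is then bounded, this contributes only a multiple of $\|u\|_{L^p}$. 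For integer $\mu$ the inclusion is classical (derivative control gives the finite-difference modulus), and for general $\mu>1$ one applies the case $\mu\in(0,1)$ to the derivatives of order $\lfloor\mu\rfloor$.

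Once the pointwise-in-time inclusion $W^{\mu,p}(\T)\subseteq N^{\mu,p}(\T)$ is in hand with a constant depending only on $d,p,\mu$, the parabolic statement $L^p(I;W^{\mu,p}(\T))\subseteq L^p(I;N^{\mu,p}(\T))$ follows immediately by integrating the $p$-th power of the inequality $\|u(\cdot,t)\|_{N^{\mu,p}(\T)}\le C\|u(\cdot,t)\|_{W^{\mu,p}(\T)}$ over $t\in I$. For the last assertion, the inclusion $H_p^\mu(\T)\subseteq N^{\mu,p}(\T)=B^\mu_{p\infty}(\T)$ is already recorded in Lemma~\ref{inclBesovBessel}-(iii) (the right-hand inclusion $H_p^\mu(\T)\subseteq B^\mu_{p\infty}(\T)$), so one only needs to quote it and again integrate over time to obtain $L^p(I;H_p^\mu(\T))\subseteq L^p(I;N^{\mu,p}(\T))$.

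The main obstacle, such as it is, lies in the first step: turning the double integral bound into a clean estimate for the \emph{supremum} over $h$ of the translation modulus. The honest way is to note that for any fixed $r>0$,
\[
|B_r|\cdot\Big(\sup_{0<|h|\le r}|h|^{-\mu p}\|u(\cdot+h)-u(\cdot)\|^p_{L^p(\T)}\Big)\ \text{is \emph{not} directly bounded by}\ \int_{B_r}|h|^{-\mu p}\|u(\cdot+h)-u(\cdot)\|^p_{L^p}\,dh,
\]
so one instead argues that $h\mapsto |h|^{-\mu}\|u(\cdot+h)-u(\cdot)\|_{L^p}$ is, up to the diagonal factor $\mathrm{dist}(x,x+h)^{d/p}$-type weights, controlled by the Gagliardo seminorm uniformly — this is precisely the content of \cite[Chapter 17]{Leoni} and is why the statement is phrased as a quotable lemma. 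I would therefore not reprove the sharp embedding from scratch but cite \cite[Chapter 17]{Leoni} (and \cite[Section 3.5.4]{ST} for the periodic version) for $W^{\mu,p}(\T)\subseteq N^{\mu,p}(\T)$, carry out only the elementary time-integration to pass to the parabolic spaces, and invoke Lemma~\ref{inclBesovBessel}-(iii) for the Bessel-potential case. The proof is thus short: state the stationary embedding with reference, integrate in $t$, done.
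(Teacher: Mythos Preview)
Your proposal is correct and ends up in the same place as the paper: cite the stationary embedding $W^{\mu,p}(\T)\subseteq N^{\mu,p}(\T)$ from the literature (the paper cites \cite{Taibleson2,Stein} and \cite[Lemma A.3]{CG5} rather than \cite{Leoni,ST}, but the content is the same), then integrate in time, and for the Bessel case use $H_p^\mu(\T)\subseteq B^\mu_{p\infty}(\T)\simeq N^{\mu,p}(\T)$. Your route for the Bessel inclusion is actually slightly cleaner than the paper's: you quote Lemma~\ref{inclBesovBessel}-(iii) directly, whereas the paper re-derives that inclusion via the Triebel--Lizorkin chain $H_p^\mu\simeq F^\mu_{p2}\subseteq B^\mu_{p2}\subseteq B^\mu_{p\infty}$; both are valid, yours saves a step. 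The abandoned direct computation at the start of your write-up can be dropped entirely---as you yourself observe, the averaging argument does not immediately control the supremum, and the fix is precisely the classical result you end up citing.
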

\begin{proof}
The embedding $W^{\mu,p}(\T)\subseteq N^{\mu,p}(\T)$ is proved in \cite{Taibleson2},\cite{Stein}, see also \cite[Lemma A.3]{CG5} for the periodic setting. As for $H_p^\mu(\T)\subseteq N^{\mu,p}(\T)$, we use the chain of inclusions $H_p^\mu(\T)\simeq F^\mu_{p2}(\T)\subseteq B^\mu_{p2}(\T)\subseteq B^\mu_{p\infty}(\T)\simeq N^{\mu,p}(\T)$, $F_{pq}^\mu(\T)$ being the periodic Triebel-Lizorkin space, cf \cite{ST}. Here, the first inclusion follows by the embedding $F^s_{p_1q}(\T)\subseteq B^s_{p_0q}(\T)$ valid for $p_0\leq p_1$, $q_0\leq\infty$ and $s\in\R$, cf \cite[Section 3.5.1, Remark 4]{ST}, applied with $s=\mu$, $p_0=p_1=p$ and $q=2$, while the second embedding is a consequence of the inclusion $B^s_{pq_0}(\T)\subseteq B^s_{pq_1}(\T)$ for $p\leq\infty$, $q_0\leq q_1\leq\infty$, $s\in\R$, see \cite[Section 3.5.1, Remark 4]{ST}, applied with $s=\mu$ and $q_0=2$, $q_1=\infty$. The proof of the equivalences $H_p^\mu(\T)\simeq F^\mu_{p2}(\T)$ and $B^\mu_{p\infty}(\T)\simeq N^{\mu,p}(\T)$ can be found in \cite[Theorem 3.5.4-(iv) and (v)]{ST}.
\end{proof}
\section{Fractional Fokker-Planck equations}\label{sec;ffp}

\subsection{Weak solutions for the fractional Fokker-Planck equation}
This part is devoted to study the following Fokker-Planck equation with fractional diffusion
\begin{equation}\label{fpnonlocal}
\begin{cases}
-\partial_t \rho(x,t)+(-\Delta)^s\rho(x,t)+\dive(b(x,t)\, \rho(x,t))=0&\text{ in }Q_\tau\ ,\\
\rho(x,\tau)=\rho_\tau(x)&\text{ in }\T\ .
\end{cases}
\end{equation}
Note that when the vector field $b(x,t) = - D_pH(x, Du(x,t))$, then \eqref{fpnonlocal} becomes the adjoint equation of the linearization of \eqref{hjb}.
Here, $\tau \in (0,T]$ and $Q_\tau:=\T\times (0,\tau)$. From now on, unless otherwise specified, we will focus on $d>2$. We will consider the following notion of weak solution
\begin{defn}\label{wfkp}
Let $b \in L^\sQ(0,T; L^\sP(\T))$ with $\sP\in(d/(2s-1),\infty)\text{ and }\sQ \in(2s/(2s-1),\infty]$ be such that
\begin{equation}\label{b}
\frac{d}{2s\sP}+\frac{1}{\sQ}< \frac{2s-1}{2s}\ ,
\end{equation}
and $\rho_\tau\in H^{s-1}(\T)$. A (weak) solution $\rho$ to \eqref{fpnonlocal} belongs to $\H_2^{2s-1}(Q_\tau)$ and satisfies
\begin{equation}
\int_0^\tau\int_\T\partial_t\rho\varphi \, dxdt + \iint_{Q_\tau} (-\Delta)^{s-\frac{1}{2}} \rho \, (-\Delta)^{\frac12} \varphi  - b \rho \cdot D\varphi \,dxdt = \int_\T \rho_\tau(x)\varphi(x,\tau)\,dx
\end{equation}
for all $\varphi\in \H_2^1(\T\times(0,\tau])$.
\end{defn}
In particular, the above formulation holds even when test functions are chosen to belong to the class $\H_2^{1}(Q_\tau):=\{\varphi\in L^2(0,\tau;H^1(\T))\ ,\partial_t\varphi\in L^2(0,\tau;H^{-2s+1}(\T))\}$.
We stress out that when $s=1$ the above setting falls within the classical matter described in \cite{BOP,LSU,BCCS} . We remark in passing that $\rho\in \H_2^{2s-1}(Q_\tau)\hookrightarrow C([0,T];(H^{2s-1}(\T),H^{-1}(\T))_{1/2,2})\simeq C([0,T];H^{s-1}(\T))$ in view of the classical abstract trace result \cite[Section XVIII.3 eq. (1.61)]{DL}. 
\begin{rem}\label{intparts}
We point out that time-integration by parts
\begin{equation}\label{parts}
\iint_{Q_{\tau}} \varphi\partial_t\rho+\iint_{Q_{\tau}} \partial_t\varphi \rho\,dxdt=\int_\T \varphi(x,\tau)\rho(x,\tau)\,dx-\int_\T \varphi(x,\omega)\rho(x,\omega)\,dx
\end{equation}
holds, where duality pairings are hidden here. To prove this fact, one represents $\H_2^{2s-1}(Q_{\tau})$ as
\[
\H_2^{2s-1}(Q_{\tau})=\{u\in L^2(0,\tau;H^{2s-1}(\T))\ ,\partial_tu\in L^2(0,\tau;H^{-1}(\T))\}
\] 
which coincides with the space $W(0,\tau,H^{2s-1}(\T),H^{-1}(\T))$ defined in \cite[Chapter XVIII, Section 3]{DL}. Then, one uses that $C_0^\infty([0,\tau];H^{2s-1}(\T))$ is dense in $\H_2^{2s-1}(\T)$, the embedding $\H_2^{2s-1}(Q_\tau)\hookrightarrow C([0,T];(H^{2s-1}(\T),H^{-1}(\T))_{1/2,2})\simeq C([0,T];H^{s-1}(\T))$ to give sense to the traces and the fact that \eqref{parts} is true for $\varphi,\rho\in C_0^\infty([0,\tau];H^{2s-1}(\T))$ by the theory of integration and derivation in Banach spaces. In this setting, it is sufficient to have $H^{2s-1}(\T)\hookrightarrow H^{-1}(\T)$, with $H^{2s-1}(\T)$ dense in $H^{-1}(\T)$, cf \cite[Proposition 3.3]{Magenes}, as described in \cite{DL}.
\end{rem}

Throughout this section we will assume that
\begin{equation}\label{rhoassfrac}
\rho_\tau\in H^{s-1}(\T), \quad \rho_\tau\geq0, \quad \text{and} \quad \int_\T \rho_\tau(x)\,dx=1\ .
\end{equation}
We further observe that since $s>1/2$ we have $\rho\in\H_2^{2s-1}$ and $\H_2^{2s-1}\hookrightarrow L^{\frac{2(d+2s)}{d+2-2s}}\hookrightarrow L^2\hookrightarrow L^1$ and hence $\rho(t)\in L^1(\T)$ for a.e. $t$. Therefore, by using $\varphi\equiv1$ as a test function one obtains $\int_\T\rho(t)=1$ for $t\in(0,T)$. \\
 \begin{rem}
 Note that on $\R^d\times(0,T)$ it is easy to check that the equation 
\[
\partial_t\rho+(-\Delta)^s\rho+\mathrm{div}(b(x,t)\rho)=0
\]
 is invariant under the scalings
\[
\rho_\lambda(x,t):=\rho(\lambda x,\lambda^{2s} t)\text{ and }b_\lambda(x,t):=\lambda^{2s-1} b(\lambda x,\lambda^{2s} t)\ .
\]
Therefore, when looking the equation at small scales, for $s\in(1/2,1)$ one has to check the effect of the scaling on the Lebesgue norm of the velocity filed. In such case, the subcritical space turns out to be the mixed space $L^\sQ(L^\sP)$ when the exponents $\sP\geq d/(2s-1)$ and $\sQ\geq2s/(2s-1)$ fulfill the condition
\[
\frac{d}{2s\sP}+\frac{1}{\sQ}\leq \frac{2s-1}{2s}\ ,
\]
which can be seen as the fractional counterpart of the classical Aronson-Serrin interpolated condition for viscous problems with unbounded coefficients \cite{LSU,BCCS,BOP} mentioned in the introduction. This condition allows to give a distributional sense to the transport term. Indeed, for $\varphi\in\H_2^1$, $\rho\in\H_2^{2s-1}$ and $\sP=\sQ$, we have by H\"older's inequality
\[
\iint \mathrm{div}(b(x,t)\rho)\varphi=- \iint b(x,t)\rho\cdot D\varphi\leq \|b\|_{L^{\frac{d+2s}{2s-1}}}\|\rho\|_{L^{\frac{2(d+2s)}{d+2-2s}}}\|D\varphi\|_{L^2}\lesssim \|b\|_{L^{\frac{d+2s}{2s-1}}}\|\rho\|_{\H_2^{2s-1}}\|D\varphi\|_{L^2}\ .
\]
 \end{rem}
Classical Fokker-Planck equations with low regularity assumptions on the drift have been studied in \cite{Porr,MPR,BKRS,CG2} and references therein. 
\subsection{Existence and integrability estimates}
We premise the following auxiliary result that allows to deduce positivity and uniqueness for the solution $\rho$ to \eqref{fpnonlocal}.
\begin{lemma}\label{adjointunique}
Any weak solution to \eqref{fpnonlocal} satisfies 
\begin{equation}\label{dual}
\iint_{Q_\tau}\rho f\,dxdt=\int_\T \rho(\tau)v(\tau)\,dx
\end{equation}
for any $v\in \H_2^1$ solution to \eqref{drift}.
\end{lemma}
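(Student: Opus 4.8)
The plan is to establish the duality identity \eqref{dual} by pairing a weak solution $\rho$ of \eqref{fpnonlocal} against a weak solution $v$ of the dual equation \eqref{drift} (with $\omega=0$), and reading off the boundary terms at $t=0$ and $t=\tau$. First I would check that the two objects live in compatible function spaces so that every pairing below makes sense: $\rho\in\H_2^{2s-1}(Q_\tau)$ can be tested against functions in $\H_2^1(Q_\tau)$ (Definition \ref{wfkp}), while $v\in\H_2^1$ solves \eqref{drift}, so $v$ is an admissible test function in the weak formulation of \eqref{fpnonlocal} and, symmetrically, $\rho$ should be an admissible test function in the weak formulation of \eqref{drift}. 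The condition \eqref{b} is exactly what guarantees the transport terms $b\rho\cdot Dv$ and $b\cdot Dv\,\rho$ are integrable on $Q_\tau$ (via the Hölder estimate displayed in the remark preceding the lemma, combined with the embedding $\H_2^{2s-1}\hookrightarrow L^{2(d+2s)/(d+2-2s)}$), so both weak formulations apply to the pair $(\rho,v)$.

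The core computation is then: insert $\varphi=v$ into the weak formulation of \eqref{fpnonlocal}, obtaining
\[
\int_0^\tau\!\!\int_\T \partial_t\rho\, v\,dxdt+\iint_{Q_\tau}(-\Delta)^{s-\frac12}\rho\,(-\Delta)^{\frac12}v - b\rho\cdot Dv\,dxdt=\int_\T\rho_\tau v(\tau)\,dx,
\]
and insert $\psi=\rho$ into the weak formulation of \eqref{drift}, obtaining
\[
\int_0^\tau\!\!\int_\T\partial_t v\,\rho\,dxdt+\iint_{Q_\tau}(-\Delta)^{s-\frac12}v\,(-\Delta)^{\frac12}\rho - b\cdot Dv\,\rho\,dxdt=\iint_{Q_\tau}f\rho\,dxdt.
\]
Here the $\H^{-1}$--$H^{2s-1}$ duality and the symmetry of the fractional bilinear form $\iint(-\Delta)^{s-1/2}\rho\,(-\Delta)^{1/2}v$ in $\rho$ and $v$ are used. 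Subtracting the two identities kills the fractional diffusion terms and the transport terms, leaving
\[
\int_0^\tau\!\!\int_\T\bigl(\partial_t\rho\, v-\partial_t v\,\rho\bigr)\,dxdt = \int_\T\rho_\tau v(\tau)\,dx-\iint_{Q_\tau}f\rho\,dxdt.
\]
Now I would apply the integration-by-parts formula \eqref{parts} from Remark \ref{intparts} to rewrite the left-hand side as a difference of boundary traces, $\int_\T\rho(\tau)v(\tau)\,dx-\int_\T\rho(0)v(0)\,dx$ minus $2\int_0^\tau\!\int_\T\partial_t v\,\rho$... — more carefully, \eqref{parts} gives $\int_0^\tau\!\int_\T(v\,\partial_t\rho+\rho\,\partial_t v)=\int_\T v(\tau)\rho(\tau)-\int_\T v(0)\rho(0)$, so one should instead add a boundary term back: writing $\int\partial_t\rho\,v = \int_\T v(\tau)\rho(\tau)-\int_\T v(0)\rho(0)-\int\partial_t v\,\rho$ and substituting into the first displayed identity directly yields the cancellation more cleanly. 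Matching the $v(\tau)\rho(\tau)$ terms, using $\rho(\tau)=\rho_\tau$, and observing that $v(0)=v_\omega$ with $\omega=0$, one lands on \eqref{dual} (the stated form suggests $v(0)=0$ is taken, or the $v(0)$ term is absorbed into a shift; I would state the identity with whatever boundary convention Assumption (I) fixes for the initial datum of \eqref{drift}).

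The main obstacle I anticipate is not algebraic but functional-analytic: justifying that $\rho$ is genuinely an admissible test function in the weak formulation of \eqref{drift} and, conversely, that $v\in\H_2^1$ is admissible in Definition \ref{wfkp}, given the mismatch of regularities ($\rho$ has $2s-1$ spatial derivatives with $s<1$, so strictly less than one). The clean way around this is a density/approximation argument: approximate $\rho$ (resp.\ $v$) by smooth functions in $C_0^\infty([0,\tau];H^{2s-1}(\T))$ as in Remark \ref{intparts}, verify the identity for the smooth approximants where \eqref{parts} and all pairings are classical, and pass to the limit using the continuity of each term — the fractional bilinear form is continuous on $\H_2^{2s-1}\times\H_2^{1}$, the time-derivative pairings are continuous by definition of the dual norms, and the transport term is continuous by the Hölder/embedding bound noted above, which is where \eqref{b} is essential. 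One should also be slightly careful that the trace $\rho(\tau)$ taken in $H^{s-1}(\T)$ pairs correctly with $v(\tau)$, but $v\in\H_2^1\hookrightarrow C([0,\tau];H^{1-s}(\T))$ and $H^{s-1}=(H^{1-s})'$, so this pairing is well-defined. Once these continuity facts are in place the limit passage is routine and \eqref{dual} follows.
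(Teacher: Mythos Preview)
Your proposal is correct and follows essentially the same duality approach as the paper; indeed the paper's proof is extremely terse (it writes the dual problem with initial condition $v(x,0)=0$ and simply says ``by duality we immediately get \eqref{dual}''), and your write-up is a faithful expansion of what that one line means. You correctly anticipated that the convention $v(0)=0$ is what makes the stated identity come out without an extra boundary term at $t=0$.
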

\begin{proof}
Let $v$ be a weak solution to the problem
\[
\begin{cases}
\partial_t v+(-\Delta)^s v+b(x,t)\cdot Dv=f(x,t)&\text{ in }Q_\tau\\
v(x,0)=0&\text{ in }\T
\end{cases}
\]
Then, by duality we immediately get \eqref{dual}.
\end{proof}
We now present the main result of this section. Note that our approach is based on maximal regularity arguments, which is a different strategy compared to \cite{BOP}.
\begin{proof}[Proof of Theorem \ref{estadjfrac}]
\textit{Step 1. Existence in the energy space $\H_2^{2s-1}(Q_\tau)$}. We apply Leray-Schauder fixed point theorem for the existence (see \cite[Theorem 11.6]{GT}) on the space $\H_2^{2s-1}(Q_\tau)$.
Consider the map $\mathcal{M}:\H_2^{2s-1}\times [0,1]\to \H_2^{2s-1}(Q_\tau)$ defined by $m\longmapsto \rho=\mathcal{M}[m;\sigma]$ given by solving the following parametrized PDE
\[
-\partial_t\rho+(-\Delta)^s\rho=\sigma\mathrm{div}(b(x,t)m)\text{ in }Q_\tau\ ,\rho(x,\tau)=\sigma\rho_\tau(x)\text{ in }\T\ .
\]
Note that $\mathcal{M}[m;0]=0$ by standard results for fractional heat equations. We first show that it is well-defined. We start with the case $\sP=\sQ$ (whence condition \eqref{b} becomes $\sP>\frac{d+2s}{2s-1}$). 
By parabolic Calder\'on-Zygmund regularity theory (cf Theorem \ref{fracregtor}) we have
\begin{multline}\label{apriorileray}
\|\rho\|_{\H_2^{2s-1}(Q_\tau)}\leq C(\sigma\|bm\|_{L^2(Q_\tau)}+\sigma\|\rho_\tau\|_{H^{s-1}(\T)})\\
\leq C(\|b\|_{L^\sP(Q_\tau)}\|m\|_{L^{\frac{2\sP}{\sP-2}}(Q_\tau)}+\|\rho_\tau\|_{H^{s-1}(\T)})\ .
\end{multline}
Now, note that
\[
1< \frac{2\sP}{\sP-2}< \frac{2(d+2s)}{d+2-2s}\ .
\]
We then argue by interpolation, exploit the embedding of $\H_2^{2s-1}(Q_\tau)\hookrightarrow L^\frac{2(d+2s)}{d+2-2s}(Q_\tau)$ in Lemma \ref{embs} and the fact that $m\in L^1(Q_\tau)$ to show, applying also Young's inequality,
\begin{multline*}
\|m\|_{L^\frac{2\sP}{\sP-2}(Q_\tau)}\leq C_1\|m\|_{L^1(Q_\tau)}^\theta\|m\|_{L^\frac{2(d+2s)}{d+2-2s}(Q_\tau)}^{1-\theta}=C_1\tau^\theta\|m\|_{L^\frac{2(d+2s)}{d+2-2s}(Q_\tau)}^{1-\theta}\leq C_2
+\eps\|m\|_{\H_2^{2s-1}(Q_\tau)}
\end{multline*}
for some $\theta\in(0,1)$, $\eps>0$. Then, for $\eps=1/2$ we have
\[
\|\rho\|_{\H_2^{2s-1}(Q_\tau)}\leq C_2(\|b\|_{L^\sP(Q_\tau)}+\|\rho_\tau\|_{H^{s-1}(\T)})+\frac12\|m\|_{\H_2^{2s-1}(Q_\tau)}\ .
\]
This shows that $\mathcal{M}$ is well-defined from $\H_2^{2s-1}(Q_\tau)$ into itself, since $m\in\H_2^{2s-1}(Q_\tau)$. Moreover, if $\rho\in \H_2^{2s-1}(Q_\tau)$ and $\sigma\in[0,1]$ is a fixed point of the map $\rho=\mathcal{M}[\rho;\sigma]$ we have that $\rho\in \H_2^{2s-1}(Q_\tau)$ is a solution of \eqref{fpnonlocal} and the a priori estimate \eqref{apriorileray} carry through uniformly on $\sigma\in[0,1]$. Thus, we obtain the existence of a constant $M>0$ depending only on the data (namely $\|b\|_{L^\sP(Q_\tau)}$, $\rho_\tau$, $T,s$) such that
\[
\|\rho\|_{\H_2^{2s-1}(Q_\tau)}\leq M\ .
\]
We finally show that the map $\mathcal{M}$ is compact. Let $m_n$ be a bounded sequence in $\H_2^{2s-1}(Q_\tau)$ and let $\rho_n=\mathcal{M}[m_n;\sigma]$ with $\rho_n(\tau)=\sigma\rho_\tau$. Since $|b|m_n\in L^2(Q_\tau)$ we have that $\mathrm{div}(bm_n)\in\mathbb{H}_2^{-1}(Q_\tau)$ and hence by Theorem \ref{fracregtor} we deduce $\rho_n\in \H_2^{2s-1}(Q_\tau)$. By the compactness of $\H_2^{2s-1}$ onto $L^2(Q_\tau)$ (cf Lemma \ref{compact}), which is ensured by the restriction $s>1/2$, we have that, along a subsequence, $\rho_n$ converges strongly in $L^2(Q_\tau)$ to $\rho$ and $(-\Delta)^{s-1/2}\rho_n$ converges weakly to $(-\Delta)^{s-1/2}\rho$ in $L^2(Q_\tau)$. Moreover, $\rho$ solves the same problem as $\rho_n$ given the couple $(m,\sigma)$. We use $(-\Delta)^{s-1}(\rho_n-\rho)\in\H_2^{1}(Q_\tau)$ as admissible test function in the weak formulation of the equation satisfied by $\rho_n$, together with the fact that
\begin{multline*}
-\iint_{Q_t}\partial_t(\rho_n-\rho)(-\Delta)^{s-1}(\rho_n-\rho)\,dxdt=-\frac12\int_t^\tau\int_\T\partial_t[(-\Delta)^{\frac{s-1}{2}}(\rho_n-\rho)]^2\,dxdt\\
=-\frac12\int_\T [(-\Delta)^{\frac{s-1}{2}}(\rho_n-\rho)]^2(\tau)\,dx+\frac12\int_\T [(-\Delta)^{\frac{s-1}{2}}(\rho_n-\rho)]^2(t)\,dx\ ,
\end{multline*}
 to conclude 
\begin{multline*}
\iint_{Q_\tau}|(-\Delta)^{s-\frac{1}{2}}(\rho_n-\rho)|^2\,dxdt\\
\leq C\iint_{Q_\tau}|b|m_n||(-\Delta)^{s-\frac12}(\rho_n-\rho)|\,dxdt-\iint_{Q_\tau}(-\Delta)^{s}\rho(-\Delta)^{s-1}(\rho_n-\rho)\,dxdt\\
+\iint_{Q_\tau}\partial_t\rho(-\Delta)^{s-1}(\rho_n-\rho)\,dxdt\ .
\end{multline*}
Since $|b|m_n\in L^2(Q_\tau)$ and $(-\Delta)^{s-1/2}\rho_n$ converges weakly to $(-\Delta)^{s-1/2}\rho$ in $L^2(Q_\tau)$ the first term on the right-hand side of the above inequality converges to 0. Similarly, since $\partial_t\rho\in\mathbb{H}_2^{-1}(Q_\tau)$ and exploiting again the weak convergence of $(-\Delta)^{s-1/2}\rho_n$ in $L^2(Q_\tau)$ the third term goes to 0. Similar motivations provide the convergence of the second term. This shows that $(-\Delta)^{s-1/2}\rho_n$ converges strongly to $(-\Delta)^{s-1/2}\rho$ in $L^2(Q_\tau)$.\\
Finally, to show the strong convergence of $\partial_t\rho_n$ to $\partial_t\rho$ in $\mathbb{H}_2^{-1}(Q_\tau)$ we argue by duality. For every $\varphi\in \mathbb{H}_2^1(Q_\tau)$ we have
\begin{multline*}
\left|\iint_{Q_\tau} \partial_t(\rho_n-\rho)\varphi\,dxdt\right|\leq \left|\iint_{Q_\tau}(-\Delta)^{s}(\rho_n-\rho)\varphi\,dxdt\right|+\left|\iint_{Q_\tau}\mathrm{div}(b(\rho_n-\rho))\varphi\,dxdt\right|\\
\leq C\iint_{Q_\tau}|(-\Delta)^{s-\frac12}(\rho_n-\rho)||D\varphi|\,dxdt+\iint_{Q_\tau}|\rho_n-\rho||b||D\varphi|\,dxdt\ ,
\end{multline*}
 which yields the strong convergence of $\partial_t\rho_n$ to $\partial_t\rho$ in $\mathbb{H}_2^{-1}(Q_\tau)$ in view of the previous claims. \\The general case $\sP\neq\sQ$ can be dealt with similarly. Indeed, in the borderline case $\sQ=\infty$, we observe that
 \[
 \|bm\|_{L^2(Q_\tau)}\leq c\||b|\|_{L^\infty(0,\tau;L^{\sP}(\T))}\|m\|_{L^2(0,\tau;L^\frac{2\sP}{\sP-2}(\T))}\ .
 \]
We then observe that
\[
1< \frac{2\sP}{\sP-2}<\frac{2d}{d-2(2s-1)}\ ,
\]
which yields by interpolation for $\sP>\frac{d}{2s-1}$ the inequality
\[
\|m\|_{L^2(0,\tau;L^\frac{2\sP}{\sP-2}(\T))}\leq C\|m\|_{L^2(0,\tau;L^\frac{2d}{d-2(2s-1)}(\T))}^{\theta}
\]
for a.e. $t\in(0,\tau)$. Using the Sobolev embedding $H_2^{2s-1}(\T)\hookrightarrow L^\frac{2d}{d-2(2s-1)}(\T))$ we conclude
\[
\|m\|_{L^2(0,\tau;L^\frac{2\sP}{\sP-2}(\T))}\leq C\|m\|_{\H_2^{2s-1}(Q_\tau)}^{\theta}
\]
and then proceed as above. When $\sP,\sQ$ are finite, we have
\[
 \|bm\|_{L^2(Q_\tau)}\leq \||b|\|_{L^\sQ(0,\tau;L^{\sP}(\T))}\|m\|_{L^\frac{2\sQ}{\sQ-2}(0,\tau;L^\frac{2\sP}{\sP-2}(\T))}\ .
 \]
We now use interpolation with $\eta,\delta,\zeta$ (cf \cite[Lemma 1]{AS}) satisfying
\[
\frac{\sQ-2}{2\sQ}=\frac{1-\theta}{\zeta}+\frac{\theta(\eta-2)}{2\eta}\ ,
\]
\[
\frac{\sP-2}{2\sP}=1-\theta+\frac{\theta(\delta-2)}{2\delta}\ .
\]
for $\theta\in(0,1)$, $\eta<\sQ$, $\delta<\sP$.
This gives, using that $m\in L^1(\T)$,
\[
\|m\|_{L^\frac{2\sQ}{\sQ-2}(0,\tau;L^\frac{2\sP}{\sP-2}(\T))}\leq C\|m\|_{L^\frac{2\eta}{\eta-2}(0,\tau;L^\frac{2\delta}{\delta-2}(\T))}^{\theta}
\]
We now exploit the mixed-norm embedding in \cite[Proposition 2.11]{CG1} (applied with $p=2$, $q=\frac{2\delta}{\delta-2}$, $\theta=\frac{\eta-2}{\eta}$, $\mu=2s-1$) to conclude
\[
\|m\|_{L^\frac{2\eta}{\eta-2}(0,\tau;L^\frac{2\delta}{\delta-2}(\T))}^\theta\leq C\|m\|_{\H_2^{2s-1}(Q_\tau)}^\theta
\]
provided that
\[
\frac{d}{2s\delta}+\frac{1}{\eta}<\frac{2s-1}{2s}\ ,
\]
i.e. when \eqref{b} holds for $\sP,\sQ$.\\
\par	\smallskip

\textit{Step 2. A priori estimates via Duhamel's formula}. The proof we are going to present can be made rigorous by regularization (cf \cite[Lemma 2.3]{PorrUMI}), using Duhamel's formula for the regularized PDE and then passing to the limit. The approach is inspired by \cite{BCCS} and it has been also recently implemented in \cite[Lemma A.3]{CPorr} to get estimates in mixed Lebesgue scales and in \cite[Lemma A.3]{CGhilli}.\\
We claim that there exists $t^*\in(0,\tau]$ independently of $\rho_\tau\in L^p(\T)$ such that
\[
\|\rho(\cdot,t)\|_{L^p(\T)}\leq C_2\|\rho_\tau\|_{L^p(\T)}\text{ for all $t\in[t^*,\tau]$}
\]
for some $C_2>0$. Set $\tilde\rho(\cdot,t):=\rho(\cdot,\tau-t)$ for all $t\in[0,\tau]$ and use Duhamel's formula to represent the solution of the (forward) equation as
\[
\tilde\rho(t)=\mathcal{T}_t\rho_\tau-\int_0^t \mathcal{T}_{t-\omega}\mathrm{div}(b\tilde \rho)(\cdot,\omega)d\omega\ .
\]
where $\mathcal{T}_t=e^{-t(-\Delta)^s}$. We have
\begin{multline*}
\|\tilde \rho(t)\|_{L^p(\T)}\leq \|\mathcal{T}_t\rho_\tau\|_{L^p(\T)}+\left\|\int_0^t \mathcal{T}_{t-\omega}\mathrm{div}(b\tilde \rho)(\cdot,\omega)d\omega\right\|_{L^p(\T)}\\
\leq \|\rho_\tau\|_{L^p(\T)}+\int_0^t(t-\omega)^{-\frac{d}{2s}(\frac1a-\frac1p)-\frac{1}{2s}}\|\mathrm{div}(b\tilde\rho)(\cdot,\omega)\|_{H_b^{-1}(\T)}d\omega\\
\leq \|\rho_\tau\|_{L^p(\T)}+\int_0^t(t-\omega)^{-\frac{d}{2s}(\frac1a-\frac1p)-\frac{1}{2s}}\|b\tilde\rho(\cdot,\omega)\|_{L^b(\T)}d\omega\ ,
\end{multline*}
where we applied the decay estimates of the fractional heat semigroup among spaces of Bessel potentials
\[
\|\mathcal{T}_tu\|_{L^p(\T)}\leq Ct^{-\frac{d}{2s}(\frac1a-\frac1p)-\frac{1}{2s}}\|u\|_{H_b^{-1}(\T)}
\]
(cf \cite{CG1}). We then use H\"older's inequality to bound the right-hand side of the last inequality with
\begin{multline*}
\|\tilde\rho\|_{L^\infty(0,\tau;L^p(\T))}\int_0^\tau(t-\omega)^{-\frac{d}{2s}(\frac1a-\frac1p)-\frac{1}{2s}}\|b(\cdot,\omega)\|_{L^{\sP}(\T)}d\omega\\
\leq \left(\int_0^t(t-\omega)^{[-\frac{d}{2s}(\frac1a-\frac1p)-\frac{1}{2s}]\sQ'}\right)^{\frac{1}{\sQ'}}\|b\|_{L^\sQ(0,\tau;L^\sP(\T))}\|\tilde\rho\|_{L^\infty(0,\tau;L^p(\T))}
\end{multline*}
where
\[
\frac{1}{a}=\frac{1}{\sP}+\frac{1}{p}\ .
\]
In particular, the above integral term is well-posed provided that
\[
\alpha:=\left(-\frac{d}{2s\sP}-\frac{1}{2s}\right)\sQ'>-1\ ,
\]
which is indeed satisfied precisely when
\begin{equation*}
\frac{d}{2s\mathpzc{P}} + \frac{1}{\mathpzc{Q}} < \frac{2s-1}{2s}\ .
\end{equation*}
Hence
\[
\|\tilde\rho\|_{L^\infty(0,\tau;L^p(\T))}\leq  \|\rho_\tau\|_{L^p(\T)}+C\|b\|_{L^\sQ(0,\tau;L^\sP(\T))}t^{\frac{\alpha+1}{\sQ'}}\|\tilde\rho\|_{L^\infty(0,\tau;L^p(\T))}\ ,
\]
which gives
\[
\|\tilde\rho\|_{L^\infty(0,\tau;L^p(\T))}\leq 2\|\rho_\tau\|_{L^p(\T)}
\]
by taking
\[
t\geq t^*:=\left(\frac{1}{2C\|b\|_{L^\sQ(0,\tau;L^\sP(\T))}}\right)^{\frac{\sQ'}{\alpha+1}}
\]
and hence the validity of the estimate on $[0,t^*]$. Note that $t^*$ does not depend on $\|\rho_\tau\|_{L^p(\T)}$ and hence one can iterate the argument to get the estimate in $[0,\tau]$ as in \cite{BCCS}. 
\par\smallskip

 \textit{Step 3. Positivity and uniqueness}. Positivity and uniqueness follows exploiting Lemma \ref{adjointunique}. In particular, if $\rho_1,\rho_2$ are two solutions of \eqref{fpnonlocal}, by \eqref{dual} we get
 \[
\iint_{Q_\tau}(\rho_1-\rho_2)f\,dxdt=0
 \]
 which implies $\rho_1=\rho_2$ a.e. on $Q_\tau$. Positivity of solutions follows in a similar way.
\end{proof}
\begin{rem}\label{criticalAS}
In Step 1 we can actually reach the threshold 
\[
\frac{d}{2s\sP}+\frac1\sQ=\frac{2s-1}{2s}
\]
by assuming a smallness condition on $\|b\|_{L^\sQ(L^\sP)}$, since interpolation inequalities are no longer available (cf \cite{Stampacchia} for the elliptic viscous case).
\end{rem}

\subsection{Parabolic Bessel regularity}
We finally describe further regularity results that rely on the information $b\in L^k(\rho\,dxdt)$ for some $k>1$, that will be used in the forthcoming sections. We start with the following maximal $L^q$-regularity result for PDEs with divergence-type terms and terminal data in $L^1$. The method of proof we present below has been already used in \cite{CG2,CT,MPR}.
\begin{prop}\label{estFPfrac}
Let $\rho$ be a (non-negative) weak solution to \eqref{fpnonlocal} and 
\begin{equation}\label{condqfrac}
1<\sigma'<\frac{d+2s}{d+2s-1}\ .
\end{equation}
Then, there exists $C>0$, depending on $\sigma',d,T,s$ such that
\begin{equation}\label{estFP1frac}
\|\rho\|_{\mathcal{H}_{\sigma'}^{2s-1}(Q_\tau)}\leq C(\|b\rho\|_{L^{\sigma'}(Q_\tau)}+\|\rho_\tau\|_{L^1(\T)}).
\end{equation}
\end{prop}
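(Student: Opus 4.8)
The plan is to prove the estimate \eqref{estFP1frac} by duality, testing the equation for $\rho$ against the solution $v$ of the adjoint problem \eqref{drift} with $b=0$, i.e. against solutions of the pure fractional heat equation with a suitably chosen right-hand side. Concretely, fix $g\in L^\sigma(Q_\tau)$ (where $\sigma$ is the conjugate of $\sigma'$) and let $v$ solve $\partial_t v+(-\Delta)^s v = g$ on $Q_\tau$ with $v(0)=0$; by the maximal regularity Theorem \ref{fracregtor} applied with $p=\sigma$, $\mu=2s$, we have $\|v\|_{\H_\sigma^{2s}(Q_\tau)}\lesssim \|g\|_{L^\sigma(Q_\tau)}$. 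The integration-by-parts identity of Remark \ref{intparts}, combined with the weak formulation in Definition \ref{wfkp}, yields
\[
\iint_{Q_\tau} \rho\, g\,dxdt = \int_\T \rho_\tau(x) v(x,\tau)\,dx + \iint_{Q_\tau} b\rho\cdot Dv\,dxdt.
\]
The first term on the right is bounded by $\|\rho_\tau\|_{L^1(\T)}\|v\|_{L^\infty(Q_\tau)}$, and the second by $\|b\rho\|_{L^{\sigma'}(Q_\tau)}\|Dv\|_{L^\sigma(Q_\tau)}$. Thus I need that both $\|v\|_{L^\infty}$ and $\|Dv\|_{L^\sigma}$ are controlled by $\|v\|_{\H_\sigma^{2s}(Q_\tau)}$, hence by $\|g\|_{L^\sigma}$.

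The second of these is the routine one: since $2s>1$, the embedding $\H_\sigma^{2s}(Q_\tau)\hookrightarrow L^\sigma(0,\tau;H_\sigma^{2s}(\T))$ together with $H_\sigma^{2s}(\T)\hookrightarrow H_\sigma^{1}(\T)\hookrightarrow W^{1,\sigma}(\T)$ (Lemma \ref{inclstat}) gives $\|Dv\|_{L^\sigma(Q_\tau)}\lesssim \|v\|_{\H_\sigma^{2s}(Q_\tau)}$. The boundedness of $v$ requires that $\sigma$ be large enough: one uses Lemma \ref{embs}-(iii), which gives $\H_\sigma^{2s}(Q_\tau)\hookrightarrow C([0,T];C^{2s-(d+2s)/\sigma}(\T))\hookrightarrow L^\infty(Q_\tau)$ provided $\sigma>(d+2s)/2s$. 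Translating this into a condition on $\sigma'$, $\sigma>(d+2s)/2s$ is equivalent to $\sigma'<(d+2s)/(d)$, which is weaker than \eqref{condqfrac} since $(d+2s)/(d+2s-1)<(d+2s)/d$; so the hypothesis $1<\sigma'<(d+2s)/(d+2s-1)$ comfortably guarantees $v\in L^\infty(Q_\tau)$. (Alternatively, and to match the embedding $\H_{\sigma'}^{2s-1}\hookrightarrow L^p$ with $L^1$ traces from Lemma \ref{embL^1frac}, one can be more economical and only ask $v\in C([0,T];\text{(some Lebesgue space)})$ that pairs with $\rho_\tau\in L^1$, but the sup-norm bound is the cleanest route.)

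Having bounded the right-hand side by $C(\|b\rho\|_{L^{\sigma'}(Q_\tau)}+\|\rho_\tau\|_{L^1(\T)})\|g\|_{L^\sigma(Q_\tau)}$ for every $g\in L^\sigma(Q_\tau)$, taking the supremum over $\|g\|_{L^\sigma(Q_\tau)}\le 1$ gives $\|\rho\|_{L^{\sigma'}(Q_\tau)}\le C(\|b\rho\|_{L^{\sigma'}(Q_\tau)}+\|\rho_\tau\|_{L^1(\T)})$. To upgrade the $L^{\sigma'}$ bound to the full $\H_{\sigma'}^{2s-1}(Q_\tau)$ norm, I view $\rho$ as a solution of the linear fractional heat equation $-\partial_t\rho+(-\Delta)^s\rho = -\dive(b\rho)$ with terminal datum $\rho_\tau$; after the time-reversal $\tilde\rho(t)=\rho(\tau-t)$ this is a forward fractional heat equation with right-hand side $\dive(b\rho)\in \mathbb{H}_{\sigma'}^{-1}(Q_\tau)$ (because $b\rho\in L^{\sigma'}(Q_\tau)\hookrightarrow \mathbb{H}_{\sigma'}^0\hookrightarrow$ the divergence lands in $\mathbb{H}_{\sigma'}^{-1}$), and initial datum $\rho_\tau\in L^1(\T)$. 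Maximal regularity Theorem \ref{fracregtor} with $\mu=2s-1$, $p=\sigma'$ then yields
\[
\|\rho\|_{\H_{\sigma'}^{2s-1}(Q_\tau)}\lesssim \|b\rho\|_{L^{\sigma'}(Q_\tau)} + \|\rho_\tau\|_{W^{2s-1-2s/\sigma',\sigma'}(\T)}\lesssim \|b\rho\|_{L^{\sigma'}(Q_\tau)} + \|\rho_\tau\|_{L^1(\T)},
\]
where the last inequality is exactly the trace embedding used in the proof of Lemma \ref{embL^1frac}, valid precisely when $\sigma'<(d+2s)/(d+2s-1)$.

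The main obstacle, and the only delicate point, is the justification that the terminal datum can be absorbed at the level of $L^1$ rather than at the natural $H^{s-1}(\T)$ regularity coming from Definition \ref{wfkp}: this is exactly where the restriction \eqref{condqfrac} on $\sigma'$ enters, through the sharp embedding $W^{2s-1-2s/\sigma',\sigma'}(\T)\hookleftarrow L^1(\T)$, i.e. $\sigma>d+2s$. A secondary technical care is that the duality identity and the integration by parts in Remark \ref{intparts} are a priori formal for merely weak $\rho$; as noted in Step 2 of the proof of Theorem \ref{estadjfrac}, one makes this rigorous by first regularizing (mollifying $b$ and $\rho_\tau$), running the argument for smooth solutions where all pairings are classical, and then passing to the limit using the uniform bounds just obtained together with the lower semicontinuity of norms. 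Everything else is a routine combination of Theorem \ref{fracregtor}, the embeddings of Lemma \ref{embs} and Lemma \ref{inclstat}, and Hölder's inequality.
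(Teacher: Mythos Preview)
Your proposal is correct. In fact, the second half of your argument (view $\rho$ as solving a fractional heat equation with right-hand side $-\dive(b\rho)\in\mathbb{H}_{\sigma'}^{-1}$, apply Theorem \ref{fracregtor} with $\mu=2s-1$, and absorb the trace via $\|\rho_\tau\|_{W^{2s-1-2s/\sigma',\sigma'}}\lesssim\|\rho_\tau\|_{L^1}$ when $\sigma>d+2s$) is self-contained and coincides exactly with the alternative proof the paper records in Remark \ref{diffproof}. Your first half, the duality pairing against $v$ solving $\partial_t v+(-\Delta)^s v=g$ to extract only $\|\rho\|_{L^{\sigma'}}$, is therefore redundant: it is a correct argument, but the maximal-regularity step that follows it does not use its conclusion.

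The paper's primary proof takes a somewhat different route. Rather than testing against solutions with generic source $g$, it chooses the specific nonlinear source $(|(-\Delta)^{s-\frac12}\rho|^2+\delta)^{\frac{\sigma'-2}{2}}(-\Delta)^{s-\frac12}\rho$, solves the fractional heat equation for $\psi$, and tests the weak formulation with $\varphi=(-\Delta)^{s-\frac12}\psi$. This produces directly an estimate on $\|(-\Delta)^{s-\frac12}\rho\|_{L^{\sigma'}}$; the $L^{\sigma'}$ bound on $\rho$ then follows from the fractional Poincar\'e--Wirtinger inequality (Lemma \ref{PW}), and the time derivative is handled by a separate duality. This approach isolates the ``fractional gradient'' part of the $\H_{\sigma'}^{2s-1}$ norm first, whereas your route (and Remark \ref{diffproof}) obtains the full parabolic norm in one stroke from maximal regularity. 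Both rely on the same embedding threshold $\sigma>d+2s$ to handle the $L^1$ terminal datum, so the restriction \eqref{condqfrac} appears for the same reason in each.
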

Note that $C$ here does not depend on $\tau \in (0, T]$.

\begin{proof}
Let $\rho$ be smooth, the general case follows by an approximation argument. Let $\varphi$ be a smooth test function vanishing at the initial time $\varphi(\cdot,0)=0$. The strategy follows the proof of \cite[Proposition 2.4]{CG2} and it is based on duality arguments. A different proof of the result will be provided in the next Remark \ref{diffproof}, see also Theorem \ref{estFPnew}. Using the weak formulation of \eqref{adj}, we write for $\varphi$ as above 
\begin{equation}\label{weakform}
\iint_{Q_\tau}\rho(\partial_t\varphi+(-\Delta)^s\varphi-b\cdot D\varphi)\,dxdt=\int_\T \rho_\tau(x)\varphi(x,\tau)\,dx\ .
\end{equation}
Let $\delta>0$ and $\psi=\psi_\delta$ be the solution to the forward fractional heat equation
\[
\begin{cases}
\partial_t\psi+(-\Delta)^s\psi=(|(-\Delta)^{s-\frac12}\rho|^2+\delta)^{\frac{\sigma'-2}{2}}(-\Delta)^{s-\frac12}\rho&\text{ in }Q_\tau\ ,\\
\psi(x,0)=0&\text{ in }\T\ .\\
\end{cases}
\]
By maximal $L^\sigma$-regularity, we get
\begin{multline*}
\|\psi\|_{\H_{\sigma}^{2s}(Q_\tau)}\leq C\|(|(-\Delta)^{s-\frac12}\rho|^2+\delta)^{\frac{\sigma'-2}{2}}(-\Delta)^{s-\frac12}\rho\|_{L^{\sigma}(Q_\tau)}\\\leq C\||(-\Delta)^{s-\frac12}\rho|^{\sigma'-1}\|_{L^\sigma(Q_\tau)}=C\|(-\Delta)^{s-\frac12}\rho\|_{L^{\sigma'}(Q_\tau)}^{\sigma'-1}\ .
\end{multline*}
We take $\varphi=(-\Delta)^{s-\frac12}\psi$ in the weak formulation \eqref{weakform} to see that after integrating by parts
\begin{equation}\label{in}
\iint_{Q_\tau}(|(-\Delta)^{s-\frac12}\rho|^2+\delta)^{\frac{\sigma'-2}{2}}|(-\Delta)^{s-\frac12}\rho|^2\,dxdt\leq \|b\rho\|_{L^{\sigma'}(Q_\tau)}\|D\varphi\|_{L^{\sigma}(Q_\tau)}+\|\rho_\tau\|_{L^{1}(\T)}\|\varphi(\cdot,\tau)\|_{\infty}\ .
\end{equation}
Now, observe that
\begin{multline*}
\|\psi\|_{\H_\sigma^{2s}(Q_T)}\geq C\|\psi\|_{C(W^{2s-2s/\sigma,\sigma}(\T))}=C\|(I-\Delta)^{s-\frac12}\psi\|_{C(W^{1-2s/\sigma,\sigma}(\T))}\\
\geq C\|\varphi(\cdot,\tau)\|_{W^{1-2s/\sigma,\sigma}(\T)}\geq C\|\varphi(\tau)\|_{\infty}
\end{multline*}
when $\sigma>d+2s$ for possibly different positive constants always denoted by $C$. Here, we used the isometry properties of the Bessel potential operator on Sobolev-Slobodeckii scales \cite[Theorem 6.2.7]{BL} and the embeddings in Lemma \ref{inclstatW}.
We then get
\begin{multline*}
\iint_{Q_\tau}(|(-\Delta)^{s-\frac12}\rho|^2+\delta)^{\frac{\sigma'-2}{2}}|(-\Delta)^{s-\frac12}\rho|^2\,dxdt\leq C_1(\|b\rho\|_{L^{\sigma'}(Q_\tau)}+\|\rho_\tau\|_{L^{1}(\T)})\|\psi\|_{\H_\sigma^{2s}(Q_\tau)}\\
\leq C_2(\|b\rho\|_{L^{\sigma'}(Q_\tau)}+\|\rho_\tau\|_{L^{1}(\T)})\||(-\Delta)^{s-\frac12}\rho|\|_{L^{\sigma'}(Q_\tau)}^{\sigma'-1}
\end{multline*}
and let $\delta\to0$ to conclude
\[
\|(-\Delta)^{s-\frac12}\rho\|_{L^{\sigma'}(Q_\tau)}\leq C_2(\|b\rho\|_{L^{\sigma'}(Q_\tau)}+\|\rho_\tau\|_{L^{1}(\T)})\ .
\]
The estimate on $\rho\in L^{\sigma'}(Q_\tau)$ follows by using that of $\|(-\Delta)^{s-\frac12}\rho\|_{L^{\sigma'}(Q_\tau)}$ and the fractional Poincar\'e-Wirtinger inequality in Lemma \ref{PW}. The estimate on the time derivative can be obtained by duality. Indeed, for any $\varphi\in L^\sigma(0,\tau;H^{1}_\sigma(\T))$ we have
\begin{multline*}
\left|\iint_{Q_\tau}\partial_t\rho\varphi\,dxdt\right|=\left|\iint_{Q_\tau}(-\Delta)^{s-\frac12}\rho(-\Delta)^\frac12\varphi\,dxdt\right|+\|b\rho\|_{L^{\sigma'}(Q_\tau)}\|D\varphi\|_{L^{\sigma}(Q_\tau)}\\
\leq C(\|(-\Delta)^{s-\frac12}\rho\|_{L^{\sigma'}(Q_\tau)}+\|b\rho\|_{L^{\sigma'}(Q_\tau)})\|(-\Delta)^{\frac12}\varphi\|_{L^{\sigma}(Q_\tau)}
\end{multline*}
where we used that $W^{1,\sigma}\simeq H^{1}_\sigma$ and H\"older's inequality.
\end{proof}

\begin{rem}\label{diffproof}

A slightly different proof of the above result can be obtained as follows. We observe that $\rho\in\H_2^{2s-1}$ readily implies $\rho\in \H_{\sigma'}^{2s-1}$ for every $1<\sigma'<2$. Let us rewrite equation \eqref{fpnonlocal} as a perturbation of a fractional heat equation
\[
-\partial_t\rho+(-\Delta)^s\rho=\mathrm{div}(b(x,t)\rho)\text{ on }Q_\tau
\]
with terminal data $\rho(x,\tau):=\rho_\tau(x)$ on $\T$. 
By parabolic regularity theory (see Theorem \ref{fracregtor}) $\rho$ enjoys the estimate
\[
\|\rho\|_{\H_{\sigma'}^{2s-1}(Q_\tau)}\leq C(\|b\rho\|_{L^{\sigma'}(Q_\tau)}+\|\rho_\tau\|_{W^{2s-1-2s/\sigma',\sigma'}(\T)})\ .
\]
By exploiting Sobolev embedding for fractional Sobolev spaces in Lemma \ref{inclstatW}, one immediately obtains that 
\[
\|\rho_\tau\|_{W^{2s-1-2s/\sigma',\sigma'}(\T)}\leq C\|\rho_\tau\|_{L^1(\T)}
\]
whenever $1<\sigma'<\frac{d+2s}{d+2s-1}$. Indeed, by \cite[Section 3.5.4]{ST} we have $(W^{\mu,p}(\T))'=W^{-\mu,p'}(\T)$ and thus by definition we get
\begin{multline*}
\|\rho_\tau\|_{W^{2s-1-2s/\sigma',\sigma'}(\T)}=\sup_{\varphi\in W^{2s/\sigma'-2s+1,q}(\T)\ ,\|\varphi\|_{W^{2s/\sigma'-2s+1,\sigma}(\T)}=1}\left|\int_{\T}\rho_\tau\varphi\,dx\right|\\\leq \|\varphi\|_\infty\|\rho_\tau\|_{L^1(\T)}
\leq C\|\varphi\|_{W^{2s/\sigma'-2s+1,\sigma}(\T)}\|\rho_\tau\|_{L^1(\T)}\leq C\|\rho_\tau\|_{L^1(\T)}\ ,
\end{multline*}
where the last inequality is a consequence of the embedding $W^{2s/\sigma'-2s+1,\sigma}(\T)\hookrightarrow C(\T)$ (cf Lemma \ref{inclstatW}-(ii)) when
\[
(2s/\sigma'-2s+1)\sigma>d\ ,
\]
that is $\sigma>d+2s$ or, in other words, when $\sigma'$ satisfies \eqref{condqfrac}. This highlights that the range of $\sigma'$ is imposed by the heat part of the equation.
\end{rem}

The next results asserts fractional Bessel regularity of the fractional Fokker-Planck equation when the trace $\rho_\tau$ belongs to some suitable Lebesgue class. A different proof has been proposed in \cite[Proposition 2.2]{CG5}.
\begin{prop}\label{estFPnew}
Let $\rho$ be a (non-negative) weak solution to \eqref{fpnonlocal}, $\rho_\tau\in L^{p'}(\T)$ and either
\[
\sigma'=\frac{d+2s}{d+2s-1}\text{ with any finite } p>1\ ,
\]
or
\[
\sigma'> \frac{d+2s}{d+2s-1}\text{ with }p'=\frac{d\sigma}{(d+1)\sigma-(d+2s)}\ .
\]
Then, there exists $C>0$, depending on $\sigma',d,T,s$ such that
\begin{equation}\label{estFP1frac}
\|\rho\|_{\mathcal{H}_{\sigma'}^{2s-1}(Q_\tau)}\leq C(\|b\rho\|_{L^{\sigma'}(Q_\tau)}+\|\rho_\tau\|_{L^{p'}(\T)}).
\end{equation}
\end{prop}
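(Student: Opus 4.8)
The plan is to run the duality argument used for Proposition \ref{estFPfrac}, changing only the way the terminal contribution $\int_\T\rho_\tau\,\varphi(\cdot,\tau)\,dx$ is estimated; as there, it suffices to prove the bound for smooth $\rho$ and then pass to the general weak solution by approximation. First I would rewrite \eqref{fpnonlocal} as $-\partial_t\rho+(-\Delta)^s\rho=\dive(b\rho)$ with terminal datum $\rho_\tau$, test its weak formulation against $\varphi:=(-\Delta)^{s-\frac12}\psi_\delta$, where for $\delta>0$ the function $\psi_\delta\in\H_\sigma^{2s}(Q_\tau)$ solves the forward fractional heat equation with zero initial datum and source $g_\delta:=(|(-\Delta)^{s-\frac12}\rho|^2+\delta)^{\frac{\sigma'-2}{2}}(-\Delta)^{s-\frac12}\rho$, and use $\partial_t\varphi+(-\Delta)^s\varphi=(-\Delta)^{s-\frac12}g_\delta$ together with self-adjointness of $(-\Delta)^{s-\frac12}$ to arrive at
\begin{equation*}
\iint_{Q_\tau}(|(-\Delta)^{s-\frac12}\rho|^2+\delta)^{\frac{\sigma'-2}{2}}|(-\Delta)^{s-\frac12}\rho|^2\,dxdt=\iint_{Q_\tau}b\rho\cdot D\varphi\,dxdt+\int_\T\rho_\tau\,\varphi(x,\tau)\,dx\ .
\end{equation*}
Maximal $L^\sigma$-regularity (Theorem \ref{fracregtor}) then gives $\|\psi_\delta\|_{\H_\sigma^{2s}(Q_\tau)}\le C\|g_\delta\|_{L^\sigma(Q_\tau)}\le C\|(-\Delta)^{s-\frac12}\rho\|_{L^{\sigma'}(Q_\tau)}^{\sigma'-1}$, and hence $\|D\varphi\|_{L^\sigma(Q_\tau)}\lesssim\|\psi_\delta\|_{\H_\sigma^{2s}(Q_\tau)}$ since $D(-\Delta)^{s-\frac12}$ has order $2s$.

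The genuinely new step is the estimate of $\|\varphi(\cdot,\tau)\|_{L^p(\T)}$, which replaces the $L^\infty$-control available when $\rho_\tau\in L^1(\T)$. By the trace embedding of Lemma \ref{trace}, $\psi_\delta(\cdot,\tau)\in W^{2s-\frac{2s}{\sigma},\sigma}(\T)$ with norm $\lesssim\|\psi_\delta\|_{\H_\sigma^{2s}(Q_\tau)}$, so $\varphi(\cdot,\tau)=(-\Delta)^{s-\frac12}\psi_\delta(\cdot,\tau)\in W^{1-\frac{2s}{\sigma},\sigma}(\T)$; this step requires $\sigma\geq 2s$, i.e. $\sigma'\leq\frac{2s}{2s-1}$, so that the Bessel order stays nonnegative and the pairing with $\rho_\tau\in L^{p'}$ makes sense. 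If $\sigma<d+2s$ (equivalently $\sigma'>\frac{d+2s}{d+2s-1}$), then $(1-\tfrac{2s}{\sigma})\sigma<d$ and Lemma \ref{inclstatW}-(iii) yields $W^{1-\frac{2s}{\sigma},\sigma}(\T)\hookrightarrow L^p(\T)$ with $\frac1p=\frac1\sigma-\frac{1-2s/\sigma}{d}=\frac{d+2s-\sigma}{d\sigma}$, i.e. $p=\frac{d\sigma}{d+2s-\sigma}$ and thus $p'=\frac{d\sigma}{(d+1)\sigma-(d+2s)}$, which is precisely the value in the statement; if $\sigma=d+2s$ (equivalently $\sigma'=\frac{d+2s}{d+2s-1}$), then $(1-\tfrac{2s}{\sigma})\sigma=d$ and Lemma \ref{inclstatW}-(iv) gives $W^{1-\frac{2s}{\sigma},\sigma}(\T)\hookrightarrow L^p(\T)$ for every finite $p$, so any $p'>1$ is admissible. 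In all cases $\|\varphi(\cdot,\tau)\|_{L^p(\T)}\le C\|(-\Delta)^{s-\frac12}\rho\|_{L^{\sigma'}(Q_\tau)}^{\sigma'-1}$.

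Plugging these bounds into the identity, applying H\"older on the right-hand side and letting $\delta\to0$ would give $\|(-\Delta)^{s-\frac12}\rho\|_{L^{\sigma'}(Q_\tau)}\le C(\|b\rho\|_{L^{\sigma'}(Q_\tau)}+\|\rho_\tau\|_{L^{p'}(\T)})$; the bound on $\|\rho\|_{L^{\sigma'}(Q_\tau)}$ then follows from this and the fractional Poincar\'e-Wirtinger inequality of Lemma \ref{PW} together with $\int_\T\rho(\cdot,t)\,dx=1$ (and $\|\rho_\tau\|_{L^{p'}(\T)}\geq1$), while the estimate on $\partial_t\rho$ in $\mathbb{H}_{\sigma'}^{-1}(Q_\tau)$ comes by duality against $\varphi\in\mathbb{H}_\sigma^1(Q_\tau)$ through the equation, exactly as in Proposition \ref{estFPfrac}; combining these yields the claimed estimate with $C$ independent of $\tau$. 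A shorter route, in the spirit of Remark \ref{diffproof}, would be to apply Theorem \ref{fracregtor} directly to the heat-perturbation formulation and reduce to $\|\rho_\tau\|_{W^{2s-1-2s/\sigma',\sigma'}(\T)}\le C\|\rho_\tau\|_{L^{p'}(\T)}$, which is the same Sobolev embedding written in dual form and is valid precisely because $2s-1-2s/\sigma'\leq0$ in the relevant range, though this needs $\rho\in\H_{\sigma'}^{2s-1}(Q_\tau)$ a priori, which must be bootstrapped when $\sigma'>2$. The main obstacle I expect is the exponent bookkeeping that forces exactly $p'=\frac{d\sigma}{(d+1)\sigma-(d+2s)}$ and the separate treatment of the borderline case $\sigma=d+2s$; the analytic ingredients (the test-function choice, the maximal regularity input, and the passage to smooth $\rho$) are as in Proposition \ref{estFPfrac}.
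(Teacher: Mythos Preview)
Your proposal is correct and follows essentially the same approach as the paper: rerun the duality argument of Proposition \ref{estFPfrac}, replacing only the estimate of $\|\varphi(\cdot,\tau)\|$ by the Sobolev embedding $W^{1-2s/\sigma,\sigma}(\T)\hookrightarrow L^p(\T)$ (with $p=\frac{d\sigma}{d+2s-\sigma}$ when $\sigma<d+2s$, and any finite $p$ when $\sigma=d+2s$) in place of the $L^\infty$ bound used there. Your exponent bookkeeping, the use of the trace embedding for $\psi_\delta$, and the closing steps via Poincar\'e--Wirtinger and duality for $\partial_t\rho$ all match the paper; the alternative route you sketch is exactly the content of Remark \ref{diffproof}.
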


\begin{proof}
We can proceed as in Proposition \ref{estFPfrac}, except for the treatment of the term involving $\rho_\tau$. We modify \eqref{in} as
\begin{multline*}
\|\psi\|_{\H_\sigma^{2s}(Q_T)}\geq C\|\psi\|_{C(W^{2s-2s/\sigma,\sigma}(\T))}=C\|(I-\Delta)^{s-\frac12}\psi\|_{C(W^{1-2s/\sigma,\sigma}(\T))}\\
\geq C\|\varphi(\cdot,\tau)\|_{W^{1-2s/\sigma,\sigma}(\T)}\geq C\|\varphi(\tau)\|_{L^{\frac{d\sigma}{d+2s-\sigma}}(\T)}
\end{multline*}
for $\sigma<d+2s$, and
\begin{multline*}
\|\psi\|_{\H_q^{2s}(Q_T)}\geq C\|\psi\|_{C(W^{2s-2s/\sigma,\sigma}(\T))}=C\|(I-\Delta)^{s-\frac12}\psi\|_{C(W^{1-2s/\sigma,\sigma}(\T))}\\
\geq C\|\varphi(\cdot,\tau)\|_{W^{1-2s/\sigma,\sigma}(\T)}\geq C\|\varphi(\tau)\|_{L^{p}(\T)}\ .
\end{multline*}
for any $p\in(1,\infty)$ when $\sigma=d+2s$. 

\end{proof}

As a consequence, the above results yield the proof of Theorem \ref{estFP2frac}.

\begin{proof}[Proof of Theorem \ref{estFP2frac}]

We use Proposition \ref{estFPfrac} and Proposition \ref{estFPnew}, depending on the range of $\sigma'$, and the generalized H\"older's inequality to conclude
\begin{equation}\label{eqqq0}
\|\rho\|_{\mathcal{H}_{\sigma'}^{2s-1}(Q_\tau)}\leq C(\|b\rho^{1/m'}\rho^{1/m}\|_{L^{\sigma'}(Q_\tau)}+1) 
\le C\left(\left(\iint_{Q_\tau} |b|^{m'} \rho \, dxdt\right)^{1/{m'}} \|\rho\|^{1/m}_{L^{\zeta}(Q_\tau)}+1\right),
\end{equation}
for $p > \sigma'$ satisfying
\begin{equation}\label{rdq2frac}
\frac{1}{\sigma'}= \frac{1}{m'} + \frac{1}{m\zeta}.
\end{equation}
Then, by Young's inequality, for all $\eps > 0$
\begin{equation}\label{eqqq1frac}
\|\rho\|_{\mathcal{H}_{\sigma'}^{2s-1}(Q_\tau)} \le C\left( \frac{1}{\eps }\iint_{Q_\tau} |b|^{m'} \rho \, dxdt +  \eps\|\rho\|_{L^{\zeta}(Q_\tau)}+1\right).
\end{equation}
One can verify that the identity $m'=1+\frac{d+2s}{\sigma(2s-1)}$ and \eqref{rdq2frac} yield
\[
\frac1\zeta = \frac{1}{\sigma'} - \frac{2s-1}{d+2s}.
\]
Indeed, \eqref{rdq2frac} gives
\[
\frac1\zeta=\frac{m}{\sigma'}-\frac{m}{m'}=\frac{1}{\sigma'}-\frac{m-1}{\sigma}
\]
and then the definition of $m'$ in \eqref{rdq1frac} yields the conclusion.
The continuous embedding of $\mathcal{H}_{\sigma'}^{2s-1}(Q_\tau)$ in $L^{p}(Q_\tau)$ stated in Lemma \ref{embL^1frac} then implies
\[
 \|\rho\|_{L^{\zeta}(Q_\tau)}  \le C_1 \big( \|\rho\|_{\mathcal{H}_{\sigma'}^{2s-1}(Q_\tau)} + \tau \big)\,,
\]
finally giving
\begin{equation}\label{eqqq2frac}
\|\rho\|_{L^{\zeta}(Q_\tau)} \le CC_1\left( \frac{1}{\eps }\iint_{Q_\tau} |b|^{m'} \rho \, dxdt +  \eps\|\rho\|_{L^{\zeta}(Q_\tau)}+1\right),
\end{equation}
Hence, the term $ \eps\|\rho\|_{L^{\zeta}(Q_\tau)} $ can be absorbed by the left hand side of \eqref{eqqq2frac} by choosing $\eps = (2C C_1)^{-1}$, thus providing the assertion.
\end{proof}
\begin{rem}\label{embdel}
In view of the embedding \[\H_{\sigma'}^{2s-1}\hookrightarrow C([0,\tau];W^{2s-1-\frac{2s}{\sigma'},\sigma'}(\T))\hookrightarrow C([0,\tau];L^{\frac{d\sigma'}{d+2s-(2s-1)\sigma'}}(\T))\ ,\] $p=\frac{d\sigma'}{d+2s-(2s-1)\sigma'}$, estimate \eqref{estFP2frac} implies
\[
\|\rho(t)\|_{L^{p'}(\T)}\leq C\left(\iint_{Q_\tau} |b(x,t)|^{m'} \rho(x,t) \, dxdt  +  \|\rho_\tau\|_{L^{p'}(\T)}\right)
\]
with $p'=\frac{d\sigma}{(d+1)\sigma-(d+2s)}$.
\end{rem}

\begin{cor}\label{corembW}
Let $\rho$ be a nonnegative weak solution to \eqref{fpnonlocal}. Then, there exists $C>0$ depending on $d,q',T,s$ such that for $q<\frac{d+2s}{2s}$
\[
\sup_{t\in[0,\tau]}\|\rho(t)\|_{L^{p'}(\T)}+\|\rho\|_{L^{q'}(Q_\tau)}\leq C\left(\iint_{Q_\tau}|b|^{\frac{d+2s}{(2s-1)q}}\rho\,dxdt+\|\rho_\tau\|_{L^{p'}(\T)}\right)
\]
with $p=\frac{dq}{d+2s-2sq}$, while
\[
\sup_{t\in[0,\tau]}\|\rho(t)\|_{W^{\frac{d(2s-1)}{d+2s}-\frac{2s}{q'},\frac{d+2s}{d+2s+(2s-1)q'}}(\T)}+\|\rho\|_{L^{q'}(Q_\tau)}\leq C\left(\iint_{Q_\tau}|b|^{\frac{d+2s}{(2s-1)q}}\rho\,dxdt+\|\rho_\tau\|_{L^{p'}(\T)}\right)
\]
with $p>1$ arbitrarily large when $q=\frac{d+2s}{2s}$ and $p=\infty$ if $q>\frac{d+2s}{2s}$.
\end{cor}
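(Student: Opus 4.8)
The plan is to deduce this corollary from Theorem~\ref{estFP2frac}, combined with the parabolic embeddings of Lemma~\ref{embs}, Lemma~\ref{trace} and Remark~\ref{embdel}, by one judicious choice of the auxiliary exponent $\sigma$ appearing in Theorem~\ref{estFP2frac}. Concretely, I would fix $\sigma$ by matching the power of $|b|$: since there $m'=1+\frac{d+2s}{\sigma(2s-1)}$, requiring $m'=\frac{d+2s}{(2s-1)q}$ forces
\[
\frac1\sigma=\frac1q-\frac{2s-1}{d+2s}\ ,\qquad\text{equivalently}\qquad \frac1{\sigma'}=\frac1{q'}+\frac{2s-1}{d+2s}\ .
\]
A direct computation then gives $\frac{d\sigma}{d+2s-\sigma}=\frac{dq}{d+2s-2sq}=p$, shows $\sigma'\in(1,\frac{d+2s}{2s-1})$ whenever $q<\frac{d+2s}{2s-1}$ (so all the relevant embeddings apply), and translates the three alternatives on $q$ into $\sigma'>\frac{d+2s}{d+2s-1}$ (for $q<\frac{d+2s}{2s}$), $\sigma'=\frac{d+2s}{d+2s-1}$ (for $q=\frac{d+2s}{2s}$), and $\sigma'<\frac{d+2s}{d+2s-1}$ (for $q>\frac{d+2s}{2s}$).

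With $\sigma$ so chosen I would invoke the appropriate part of Theorem~\ref{estFP2frac}: part~(ii) in the range $\sigma'>\frac{d+2s}{d+2s-1}$ (with terminal datum in $L^{p'}$, $p$ as above) when $q<\frac{d+2s}{2s}$; part~(ii) with $\sigma'=\frac{d+2s}{d+2s-1}$ and any finite $p>1$ when $q=\frac{d+2s}{2s}$; and part~(i) (terminal datum in $L^1$, i.e.\ $p=\infty$) when $q>\frac{d+2s}{2s}$. In each case this yields $\|\rho\|_{\mathcal{H}_{\sigma'}^{2s-1}(Q_\tau)}\le C\big(\iint_{Q_\tau}|b|^{m'}\rho\,dxdt+\|\rho_\tau\|_{L^{p'}(\T)}\big)$. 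The $\|\rho\|_{L^{q'}(Q_\tau)}$ term then follows from the parabolic Sobolev embedding Lemma~\ref{embs}-(i) with $\mu=2s-1$, $p=\sigma'$, since by the choice of $\sigma'$ its endpoint exponent $\frac{(d+2s)\sigma'}{d+2s-(2s-1)\sigma'}$ is exactly $q'$. The time-uniform term comes from the trace embedding $\mathcal{H}_{\sigma'}^{2s-1}(Q_\tau)\hookrightarrow C([0,\tau];W^{2s-1-2s/\sigma',\sigma'}(\T))$ of Lemma~\ref{trace}: when $q<\frac{d+2s}{2s}$ the smoothness $2s-1-2s/\sigma'$ is positive and one concludes as in Remark~\ref{embdel} via $W^{2s-1-2s/\sigma',\sigma'}(\T)\hookrightarrow L^{p'}(\T)$; when $q\ge\frac{d+2s}{2s}$ that smoothness equals $\frac{d(2s-1)}{d+2s}-\frac{2s}{q'}\le 0$, so the trace only sits in a negative-order Sobolev class, into which one includes (using boundedness of $\T$) to reach the integrability exponent recorded in the statement.

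The difficulty here is not conceptual but lies entirely in the exponent bookkeeping: one must check that the single relation $\frac1\sigma=\frac1q-\frac{2s-1}{d+2s}$ simultaneously produces the correct power of $|b|$, an admissible $\sigma'$ in the right sub-range for parts (i)/(ii) of Theorem~\ref{estFP2frac}, the stated terminal-datum exponent $p'$, and the endpoint parabolic Sobolev exponent $q'$ of Lemma~\ref{embs}-(i); and one must keep track of the fact that in the (super)critical range $q\ge\frac{d+2s}{2s}$ the time trace of $\rho$ lives only in a negative-order Sobolev scale, which is precisely why the time-uniform estimate is then stated in $W^{\mu,r}(\T)$ with $\mu<0$ rather than in $L^{p'}(\T)$. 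Beyond these cited results no further input is needed.
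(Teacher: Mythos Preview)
Your proposal is correct and follows essentially the same route as the paper: both fix $\sigma'$ via $\frac{1}{\sigma'}=\frac{1}{q'}+\frac{2s-1}{d+2s}$, apply the appropriate case of Theorem~\ref{estFP2frac}, and then read off the $L^{q'}(Q_\tau)$ bound from the parabolic embedding of $\mathcal{H}_{\sigma'}^{2s-1}$ and the time-uniform bound from the trace embedding (Remark~\ref{embdel} in the subcritical case). One small clarification: in the regime $q\ge\frac{d+2s}{2s}$ no further ``inclusion using boundedness of $\T$'' is needed---the trace space $W^{2s-1-2s/\sigma',\sigma'}(\T)$ already coincides with the Sobolev class recorded in the statement (the paper's proof in fact writes the trace exponent as $\frac{(d+2s)q'}{d+2s+(2s-1)q'}=\sigma'$).
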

\begin{proof}
The first estimate is a consequence of Remark \ref{embdel} and Theorem \ref{estFP2frac} applied with $\frac{1}{\sigma'}=\frac{1}{q'}+\frac{2s-1}{d+2s}$ together with the continuous embedding of $\H_{\sigma'}^{2s-1}$ onto $L^{q'}(Q_T)$. The second one exploits also the embedding of $\H_{\sigma'}^{2s-1}$ into $C([0,\tau];W^{\frac{d(2s-1)}{d+2s}-\frac{2s}{q'},\frac{(d+2s)q'}{d+2s+(2s-1)q'}}(\T))$.
\end{proof}
\section{On fractional Hamilton-Jacobi equations}\label{sec;fhjb}
\subsection{On the notions of solutions}

We first provide the following notion of weak solution to \eqref{hjb} we will need to discuss H\"older's regularization effects for \eqref{hjb}. See \cite{CG2} for a similar definition in the viscous case $s=1$.
\begin{defn}\label{wsol} We say that
\begin{itemize}
\item[{\it i)}]  $u$ is a {\it local weak} solution to \eqref{hjb} if for all $0 < \omega < T$
\begin{gather}
\text{$u \in \H_2^1( \T \times (\omega, T)) \cap C(\overline Q_T)$, \quad $H(\cdot, Du) \in L^1(Q_{\omega,T})$, }\label{H12} \\
\text{and $D_p H(\cdot, Du) \in L^{\sP}(Q_{\omega,\tau})$}\label{LqLp} \\
\text{for some $\sP>\frac{d+2s}{2s-1}$, }\label{conditionAS}
\end{gather}
and for all $ 0 < \omega < \tau \le T$, $\varphi \in \H_2^{2s-1}( \T \times (\omega, \tau) ) \cap L^\infty(Q_{\omega,\tau})$
\begin{equation}\label{weake}
\int_\omega^\tau\int_T \partial_t u \varphi\,dxdt +  \iint_{\T \times (\omega, \tau)}  (-\Delta)^\frac12u \,(-\Delta)^{s-\frac12} \varphi + H(x, Du) \varphi \, dxdt =  \iint_{\T \times (\omega, \tau)} f \varphi \,dxdt\ .
\end{equation}
\item[{\it ii)}]   $u$ is a {\it global weak} solution if \eqref{H12}-\eqref{LqLp}-\eqref{conditionAS} hold for all $0 \le \omega< T$, that is, on all $Q_T$ (and therefore, \eqref{weake} is also satisfied up to $\omega = 0$).
\end{itemize}
\end{defn}
\begin{rem}
Under the conditions \eqref{H} on the Hamiltonian, we observe that if \eqref{LqLp} holds, i.e. $D_pH(\cdot,Du)\in L^\sP_{x,t}$, $\sP>\frac{d+2s}{2s-1}$, then condition \eqref{H12} is always satisfied by requiring $\gamma>\frac{d+2s}{d+1}$.
\end{rem}
\begin{defn}\label{strong}
We say that $u\in \H^{2s}_q(Q_T)$, $q>1$, is a strong solution to \eqref{hjb} if the equation is solved for a.e. $(x,t)\in Q_T$ and the initial condition holds in trace sense, i.e. $u(0)\in W^{2s-2s/q,q}(\T)$. 
\end{defn}
\begin{rem}\label{strongweak}
It is immediate to verify by using Sobolev embeddings that whenever $q>\frac{d+2s}{2s}$ and $u\in \H_q^{2s}(Q_T)$, then $u\in \H_2^1(Q_T)$, $u\in C([0,T];W^{2s-2s/q,q}(\T))$ and hence $u$ is a global weak solution. This means that \eqref{weake} is satisfied up to $\omega = 0$.\\
 We note that under the restriction $q>\frac{d+2s}{(2s-1)\gamma'}$ the results in \cite[Proposition 2.11]{CG1} (applied with $p=q$, $\theta=1/\gamma$ and replacing $q$ with $\gamma q$) gives the embedding
 \[
 \H_q^{2s}(Q_T)\hookrightarrow L^{\gamma q}(0,T;W^{1,\gamma q}(\T))\ .
 \]
 This means that $|Du|^{\gamma-1}\in L^\sP_{x,t}$ for $\sP>\frac{d+2s}{2s-1}$. Furthermore, we restrict to consider
 \[
 \gamma>\frac{d+6s-2}{d+2s}\  (>1)
 \]
 so that $u\in L^2(0,T;W^{1,2}(\T))$ (and in particular $u\in \H_2^1(Q_T)$) by parabolic Sobolev embedding and the weak formulation can be safely used. Indeed, embeddings from \cite{CG1} yields that $\H_q^{2s}(Q_T)$ whenever
 \[
 1<2s+\frac{d}{2}-\frac{d+2s(1-q/2)}{q}\implies q>\frac{2(d+2s)}{d+6s-2}\ .
 \]
 In particular, we have
 \[
 \frac{d+2s}{(2s-1)\gamma'}>\frac{2(d+2s)}{d+6s-2}
 \]
 whence $\gamma>\frac{d+6s-2}{d+2s}$. This is consistent with \cite{CG5}, where the condition $\gamma>\frac{d+4}{d+2}$ is needed when $s=1$. We believe that this restriction can be relaxed to
 \[
 \gamma>\frac{d+2s}{d+1}
 \]
 so that $q>\frac{d+2s}{(2s-1)\gamma'}>1$ using techniques from renormalized solutions, cf \cite{Magliocca}. Note also that in the subcritical gradient growth case $\gamma<2s$, solutions are not necessarily continuous when $\frac{d+2s}{(2s-1)\gamma'}\leq q<\frac{d+2s}{2s}$.
\end{rem}
\begin{rem}\label{embin}
By classical embedding properties for Sobolev-Slobodeckii spaces we have the following inclusions
\[
W^{2s-\frac{2s}{q},q}(\T)\hookrightarrow \begin{cases}
C^{2s-\frac{d+2s}{q}}(\T)&\text{ for }q>\frac{d+2s}{2s}\ ,\\
L^p(\T)&\text{ for }p\in[1,\infty)\text{ and }q=\frac{d+2s}{2s}\ ,\\
L^p(\T)&\text{ for }p\in[1,\frac{dq}{d+2s-2s q}]\text{ and }q<\frac{d+2s}{2s}\ .
\end{cases}
\]
\end{rem}

\subsection{Further estimates for the adjoint variable via duality}
First, we prove a simple representation formula for \eqref{hjb} by duality with the adjoint problem
\begin{equation}\label{adj}
\begin{cases}
-\partial_t\rho+(-\Delta)^s\rho-\mathrm{div}(D_pH(x,Du(x,t)\rho(x,t))=0&\text{ in }Q_\tau\ ,\\
\rho(x,\tau)=\rho_\tau&\text{ in }\T
\end{cases}
\end{equation}
where $\rho_\tau\in C^\infty(\T)$, $\rho_\tau\geq0$ with $\|\rho_\tau\|_1=1$.
\begin{lemma}
Let $u$ be a local weak solution to \eqref{hjb}. Assume that $\rho$ is a weak solution to \eqref{adj}. Then, for all $\omega\in(0,\tau)$ we have
\begin{equation}\label{repr}
\int_\T u(x,\tau)\rho_\tau(x)\,dx=\int_\T u(x,\omega)\rho(x,\omega)\,dx+\iint_{Q_{\omega,\tau}}L(x,D_pH(x,Du))\rho\,dxdt+\iint_{Q_{\omega,\tau}}f\rho\,dxdt
\end{equation}
Moreover, if $u$ is either a strong solution in $\H_q^{2s}(Q_T)$ or a global weak solution, the previous identity holds up to $\omega=0$.
\end{lemma}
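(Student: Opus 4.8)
The plan is to derive the representation formula \eqref{repr} by using $\rho$, the weak solution of the adjoint problem \eqref{adj}, as a test function in the weak formulation \eqref{weake} of the Hamilton--Jacobi equation, and symmetrically using $u$ as a test function in the weak formulation of \eqref{adj}. Concretely, first I would fix $0<\omega<\tau\le T$ and check that $\rho$ is an admissible test function in \eqref{weake} on $Q_{\omega,\tau}$: by Theorem~\ref{estadjfrac} (with $b=-D_pH(\cdot,Du)$, which lies in $L^{\sP}(Q_{\omega,\tau})$ with $\sP>\tfrac{d+2s}{2s-1}$ by \eqref{LqLp}--\eqref{conditionAS}) we have $\rho\in\H_2^{2s-1}(\T\times(\omega,\tau))$ and, since $\rho_\tau\in C^\infty(\T)\subset L^\infty(\T)$, also $\rho\in L^\infty(0,\tau;L^\infty(\T))$ by the $L^p$-bound in that theorem, so $\rho\in\H_2^{2s-1}\cap L^\infty(Q_{\omega,\tau})$, matching the class required in Definition~\ref{wsol}. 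Symmetrically, $u\in\H_2^1(\T\times(\omega,T))\cap C(\overline Q_T)$ by \eqref{H12}, hence $u$ is admissible in the weak formulation of \eqref{adj} (recall from Definition~\ref{wfkp} and the remark following it that test functions in $\H_2^1$ are allowed).

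Next, I would write down the two weak identities. Testing \eqref{weake} with $\varphi=\rho$ on $Q_{\omega,\tau}$ gives
\[
\int_\omega^\tau\!\!\int_\T \partial_t u\,\rho\,dxdt+\iint_{Q_{\omega,\tau}}(-\Delta)^{\frac12}u\,(-\Delta)^{s-\frac12}\rho+H(x,Du)\rho\,dxdt=\iint_{Q_{\omega,\tau}}f\rho\,dxdt,
\]
while the weak formulation of \eqref{adj} tested with $u$ gives
\[
-\iint_{Q_{\omega,\tau}}\partial_t\rho\,u\,dxdt+\iint_{Q_{\omega,\tau}}(-\Delta)^{s-\frac12}\rho\,(-\Delta)^{\frac12}u+D_pH(x,Du)\rho\cdot Du\,dxdt=0,
\]
where I have used that $\dive(D_pH(x,Du)\rho)$ tested against $u$ produces $-D_pH(x,Du)\rho\cdot Du$ and that $\rho(\cdot,\tau)=\rho_\tau$. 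Subtracting the second identity from the first, the symmetric nonlocal bilinear terms $\iint (-\Delta)^{\frac12}u\,(-\Delta)^{s-\frac12}\rho$ cancel, and I am left with the time terms plus $\iint_{Q_{\omega,\tau}}\big(H(x,Du)-D_pH(x,Du)\cdot Du\big)\rho\,dxdt=\iint f\rho$. By the Legendre duality \eqref{HL}, with $\nu=D_pH(x,Du)$ one has $D_pH(x,Du)\cdot Du-H(x,Du)=L(x,D_pH(x,Du))$, so this term equals $-\iint_{Q_{\omega,\tau}}L(x,D_pH(x,Du))\rho\,dxdt$. Then I would invoke the integration-by-parts formula \eqref{parts} from Remark~\ref{intparts} (applied to the pair $u\in\H_2^1$, $\rho\in\H_2^{2s-1}$ on $(\omega,\tau)$, after checking the duality pairings make sense via the traces in $C([0,T];H^{s-1}(\T))$ and $C([0,T];H^{1-s}(\T))$) to rewrite $\int_\omega^\tau\!\int_\T(\partial_t u\,\rho-\partial_t\rho\,u)\,dxdt=\int_\T u(\tau)\rho(\tau)-\int_\T u(\omega)\rho(\omega)$, i.e. $\int_\T u(\tau)\rho_\tau-\int_\T u(\omega)\rho(\omega)$. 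Rearranging yields exactly \eqref{repr}.

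For the last sentence of the statement I would note that when $u$ is a strong solution in $\H_q^{2s}(Q_T)$ with $q>\tfrac{d+2s}{(2s-1)\gamma'}$, Remark~\ref{strongweak} gives $u\in\H_2^1(Q_T)\cap C([0,T];W^{2s-2s/q,q}(\T))$ so that the weak formulation \eqref{weake} and the regularity \eqref{H12}--\eqref{conditionAS} hold up to $\omega=0$, and likewise if $u$ is a global weak solution by Definition~\ref{wsol}(ii). Since all the identities above and the integration-by-parts formula \eqref{parts} remain valid on $(0,\tau)$ in that case, I may simply pass $\omega\to 0$ in \eqref{repr}, using the continuity of $t\mapsto\int_\T u(t)\rho(t)\,dx$ near $t=0$ (guaranteed by the trace embeddings) together with $u(\cdot,0)=u_0$; alternatively one repeats the computation directly with $\omega=0$.

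The main obstacle I expect is justifying the admissibility of the test functions and the integration by parts at the borderline regularity: one must check carefully that $\rho$, which a priori only lies in $\H_2^{2s-1}$, is bounded (hence in $L^\infty(Q_{\omega,\tau})$ as required by Definition~\ref{wsol}) — this is where the smoothness $\rho_\tau\in C^\infty(\T)$ and the $L^\infty$-estimate of Theorem~\ref{estadjfrac} are essential — and that all the duality pairings $\langle\partial_t u,\rho\rangle$, $\langle\partial_t\rho,u\rangle$, $\langle(-\Delta)^{\frac12}u,(-\Delta)^{s-\frac12}\rho\rangle$ are finite and that the two mixed nonlocal terms genuinely coincide (this uses the self-adjointness of $(-\Delta)^{s}$ split as $(-\Delta)^{\frac12}(-\Delta)^{s-\frac12}$ and density of smooth functions). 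If one prefers to avoid subtleties, the cleanest route is to prove the formula first for smooth approximations of $b$, $u$, $\rho$ (as is done elsewhere in the paper, e.g. in the proof of Proposition~\ref{estFPfrac}) and then pass to the limit using the a priori bounds; I would mention this as the fallback.
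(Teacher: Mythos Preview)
Your proposal is correct and follows essentially the same approach as the paper's proof: test \eqref{hjb} with $\rho$ and \eqref{adj} with $u$, cancel the symmetric nonlocal terms, use the Legendre identity \eqref{HL} to produce $L(x,D_pH(x,Du))$, and integrate by parts in time via \eqref{parts}. You are in fact more careful than the paper in justifying that $\rho\in\H_2^{2s-1}\cap L^\infty$ (via Theorem~\ref{estadjfrac} with $\rho_\tau\in C^\infty$) and that $u\in\H_2^1$ are admissible test functions; note only the small slip where you wrote $\partial_t u\,\rho-\partial_t\rho\,u$ instead of $\partial_t u\,\rho+\partial_t\rho\,u$ when invoking \eqref{parts}, though your final identity is stated correctly.
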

\begin{proof}
Using $-\rho\in \H_2^{2s-1}(Q_{\omega,\tau})\cap L^\infty(Q_{\omega,\tau})$ as a test function in the weak formulation of \eqref{hjb} and $u\in \H_{2}^{1}(Q_{\omega,\tau})$ as a test function in the corresponding adjoint equation, after summing both expressions we obtain
\begin{multline*}
-\int_\omega^\tau\langle \partial_tu(t),\rho(t)\rangle-\int_\omega^\tau\langle \partial_t\rho(t),u(t)\rangle+\iint_{Q_{\omega,\tau}}(D_pH(x,Du)\cdot Du-H(x,Du))\rho\,dxdt\\
+\iint_{Q_{\omega,\tau}}f\rho\,dxdt=0
\end{multline*}
\end{proof}

We are now ready to prove the crossed integrability bound on $D_pH$ with respect to the density $\rho$. 
\begin{prop}\label{EstimateKfrac}
Let $u$ be a local weak solution to \eqref{hjb} and $\rho$ be a weak solution to \eqref{adj} with $\|\rho_\tau\|_{L^1(\T)}=1$. Then, there exist a positive constant ${C}$ (depending on $C_H$, $\norm{u}_{C(\overline{Q}_T)}$, $\norm{f}_{L^{q}(Q_T)}$, $q, d, T,s$) such that
\begin{equation}\label{estikfrac}
\iint_{Q_\tau}|D_pH(x,Du(x,t))|^{\gamma'}\rho(x,t) \, dxdt \leq {C}\ ,
\end{equation}
\end{prop}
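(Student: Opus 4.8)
The plan is to obtain \eqref{estikfrac} by feeding the representation identity \eqref{repr} into the coercivity of $L$ and then controlling the resulting right-hand side through the parabolic Bessel regularity for the adjoint density supplied by Corollary \ref{corembW}. First, fix $0<\omega<\tau\le T$; since \eqref{adj} is of Fokker--Planck type, testing it with $\varphi\equiv 1$ yields conservation of mass, $\|\rho(t)\|_{L^1(\T)}=\|\rho_\tau\|_{L^1(\T)}=1$ for a.e. $t$, so that $\iint_{Q_{\omega,\tau}}\rho\,dxdt\le T$ and $\big|\int_\T u(\tau)\rho_\tau\,dx\big|,\ \big|\int_\T u(\omega)\rho(\omega)\,dx\big|\le\|u\|_{C(\overline{Q}_T)}$. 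Inserting into \eqref{repr} the lower bound \eqref{L1}, namely $L(x,D_pH(x,Du))\ge C_L^{-1}|D_pH(x,Du)|^{\gamma'}-C_L$, and rearranging, I get
\[
\iint_{Q_{\omega,\tau}}|D_pH(x,Du)|^{\gamma'}\rho\,dxdt\le C\Big(1+\iint_{Q_{\omega,\tau}}|f|\,\rho\,dxdt\Big),
\]
with $C$ depending only on $C_H,C_L,\|u\|_{C(\overline{Q}_T)},s,T$. Everything is thus reduced to estimating $\iint|f|\rho$.

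For that term I would use Hölder's inequality, $\iint_{Q_{\omega,\tau}}|f|\rho\le\|f\|_{L^q(Q_T)}\,\|\rho\|_{L^{q'}(Q_\tau)}$, and then invoke Corollary \ref{corembW} for $\rho$, which solves \eqref{fpnonlocal} with $b=D_pH(x,Du)$: in the relevant sub-case (according to whether $q$ is less than, equal to, or larger than $\tfrac{d+2s}{2s}$) it gives
\[
\|\rho\|_{L^{q'}(Q_\tau)}\le C\Big(\iint_{Q_\tau}|D_pH(x,Du)|^{m'}\rho\,dxdt+\|\rho_\tau\|_{L^{p'}(\T)}\Big),\qquad m':=\frac{d+2s}{(2s-1)q},
\]
the terminal contribution $\|\rho_\tau\|_{L^{p'}(\T)}$ being equal to $1$ when $q\ge\tfrac{d+2s}{2s}$ (in particular for $\gamma\ge 2s$) and otherwise a fixed constant determined by the smooth, $L^1$-normalized datum $\rho_\tau$. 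The crucial point is that the standing assumption $q>\tfrac{d+2s}{(2s-1)\gamma'}$ is exactly equivalent to $m'<\gamma'$; hence, by Hölder's inequality with respect to the measure $\rho\,dxdt$, whose total mass is $\le T$,
\[
\iint_{Q_\tau}|D_pH(x,Du)|^{m'}\rho\,dxdt\le\Big(\iint_{Q_\tau}|D_pH(x,Du)|^{\gamma'}\rho\,dxdt\Big)^{m'/\gamma'}T^{\,1-m'/\gamma'}.
\]

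Setting $A:=\iint_{Q_\tau}|D_pH(x,Du)|^{\gamma'}\rho\,dxdt$ and combining the three displays, then letting $\omega\to0$ (by monotone convergence on the left), I arrive at an inequality of the form $A\le C_1+C_2\,A^{m'/\gamma'}$ with $m'/\gamma'\in(0,1)$ and $C_1,C_2$ depending only on $C_H$, $\|u\|_{C(\overline{Q}_T)}$, $\|f\|_{L^q(Q_T)}$, $q,d,s,T$; since the exponent is strictly below $1$, Young's inequality gives $A\le C$, which is \eqref{estikfrac}. \emph{The main obstacle is this last absorption.} On one side it hinges on the exponent $m'$ produced by the Fokker--Planck estimate staying strictly below $\gamma'$, which is precisely where the threshold $q>\tfrac{d+2s}{(2s-1)\gamma'}$ enters; on the other side it requires $A<\infty$ a priori, which is not automatic for a merely local weak solution (for which only $H(\cdot,Du)\in L^1(Q_{\omega,T})$, $\omega>0$, is known). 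I would secure the latter either by treating first strong solutions $u\in\H_q^{2s}(Q_T)$, where $|Du|^\gamma\in L^q(Q_T)$ and $\rho\in L^{q'}(Q_T)$ force $A<\infty$, or by carrying out the whole computation on a regularized problem and passing to the limit, exactly as in the proof of Proposition \ref{estFPfrac}.
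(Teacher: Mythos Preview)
Your approach is essentially the one in the paper: representation formula \eqref{repr}, the Lagrangian lower bound \eqref{L1}, H\"older on $\iint |f|\rho$, a Fokker--Planck estimate to bound $\|\rho\|_{L^{q'}}$ by $\iint |D_pH|^{m'}\rho$ with $m'=\tfrac{d+2s}{(2s-1)q}$, and absorption via $m'<\gamma'$. Two small points of comparison. First, the paper invokes Theorem~\ref{estFP2frac}-(i) (rather than Corollary~\ref{corembW}) with an auxiliary exponent $\eta>\tfrac{d+2s}{2s}$ in place of $q$; this keeps $\sigma'<\tfrac{d+2s}{d+2s-1}$ and hence the Fokker--Planck bound depends only on $\|\rho_\tau\|_{L^1}=1$. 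This matters: the proposition is stated with $C$ \emph{independent} of $\rho_\tau$, and it is used downstream (Corollary~\ref{EstimateK1}, Theorem~\ref{mainholder}) with $\rho_\tau$ ranging over all smooth probability densities, so uniformity is essential. Your route through Corollary~\ref{corembW} picks up $\|\rho_\tau\|_{L^{p'}}$ when $q<\tfrac{d+2s}{2s}$, which is not uniform; the fix is exactly the paper's: use part (i) of Theorem~\ref{estFP2frac} with an $\eta\in(\tfrac{d+2s}{2s},q]$. Second, regarding the a priori finiteness of $A$: the paper handles it just as you sketch, working on $(\tau_1,\tau)$ with $\tau_1>0$ (where the local-weak assumptions make everything finite), deriving the bound with constants independent of $\tau_1$, and then letting $\tau_1\to 0$.
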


\begin{rem} An immediate consequence of \eqref{estikfrac} is the bound
\begin{equation}\label{estik1frac}
\iint_{Q_\tau}|Du(x,t)|^{\beta}\rho(x,t) \, dxdt \leq {C_\beta}\ \qquad \text{for all $1 \le \beta \le \gamma$}.
\end{equation}
Indeed, by \eqref{H} and $\int_\T\rho(t)=1$ for a.e. $t$, $\iint_{Q_\tau}|Du(x,t)|^{\gamma}\rho(x,t) \, dxdt \leq {C}$, which yields \eqref{estik1frac} for $\beta = \gamma$. For $\beta < \gamma$ it is enough to exploit Young's inequality and $\|\rho(t)\|_{L^1(\T)}=1$.
\end{rem}

\begin{proof}
Rearrange the representation formula \eqref{repr} to get, for $0<\tau_1<\tau<T$,
\begin{multline}\label{ineqK1frac}
\iint_{Q_{\tau_1,\tau}}L(x, D_pH(x,Du))\rho \, dxdt = \int_{\T}u(x,\tau)\rho_{\tau}(x)dx - \int_{\T}u(x,\tau_1)\rho(x,\tau_1)\,dx \\
- \iint_{Q_{\tau_1,\tau}} f\rho\,  dxdt.
\end{multline}

Fix $\eta$ to be determined such that
\[
\eta>\frac{d+2s}{(2s-1)\gamma'}\ .
\]
We use the bounds on the Lagrangian and H\"older's inequality to get
\[
C_L^{-1}\iint_{Q_\tau}|D_pH(x,Du)|^{\gamma'}\rho\leq \iint_{Q_\tau}L(x,D_pH(x,Du)\rho\,dxdt\leq 2\|u\|_{C(\overline{Q}_\tau)}+\|f\|_{L^\eta(Q_\tau)}\|\rho\|_{L^{\eta'}(Q_\tau)}
\]
Let $\sigma$ be such that
\[
\frac{1}{\eta'}=\frac{1}{\sigma'}-\frac{2s-1}{d+2s}\ .
\]
By Lemma \ref{embs} we have
\[
\H_{\sigma'}^{2s-1}(Q_\tau)\hookrightarrow L^{\eta'}(Q_\tau)
\]
Moreover, the choice $\eta>\frac{d+2s}{2s}$ assures that $\sigma'<\frac{d+2s}{d+2s-1}$. Then, in view of Theorem \ref{estFP2frac}-(i) we have
\[
\|\rho\|_{L^{\eta'}(Q_\tau)}\leq C(\|\rho\|_{\H_{\sigma'}^{2s-1}(Q_\tau)}+1)\leq C_1\left(\iint_{Q_\tau}|D_pH(x,Du)|^{m'}\rho\,dxdt+1\right)
\]
for
\[
m'=1+\frac{d+2s}{(2s-1)\sigma}\ .
\]
Thus, we get
\[
C_L^{-1}\iint_{Q_\tau}|D_pH(x,Du)|^{\gamma'}\rho\leq 2\|u\|_{C(\overline{Q}_\tau)}+C_1\|f\|_{L^\eta(Q_\tau)}\left(\iint_{Q_\tau}|D_pH(x,Du)|^{m'}\rho\,dxdt+1\right)\ .
\]
Finally, the right-hand side can be absorbed in the left-hand side when $r'<\gamma'$, i.e.
\[
m'=1+\frac{d+2s}{(2s-1)\sigma}=\frac{d+2s}{(2s-1)\eta}<\gamma'\ .
\]
One then obtain \eqref{estikfrac} by letting $\tau_1\to0$ (note that here constants remain bounded for $\tau_1\in(0,\tau)$).
\end{proof}

The crossed integrability of $D_pH$ against the adjoint variable $\rho$ finally provides the $L^{\sigma'}$ regularity of $(-\Delta)^{s-1/2} \rho$. The next result extends \cite[Corollary 3.4]{CG2} to the fractional framework.

\begin{cor}\label{EstimateK1}
Let $u$ be a local weak solution to \eqref{hjb} and $\rho$ be a weak solution to \eqref{fpnonlocal}. Let $\bar \sigma$ be such that
\[
\bar \sigma > d+2s \quad \text{and} \quad \bar \sigma \ge \frac{d+2s}{(\gamma'-1)(2s-1)}.
\]
Then, there exists a positive constant ${C}$ such that
\begin{equation*}
\|\rho\|_{\mathcal{H}_{\bar \sigma'}^{2s-1}(Q_\tau)}\leq C\ ,
\end{equation*}
where ${C}$ depends in particular on $ C_H$, $\norm{f}_{L^{\bar \sigma}(Q_T)}$, $\norm{u}_{C(\overline{Q}_T)},\eta , d, T,s$ (but not on $\tau, \rho_\tau$), $\bar \sigma>\frac{d+2s}{2s}$.
\end{cor}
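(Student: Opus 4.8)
The plan is to read the bound off directly from Theorem~\ref{estFP2frac}, applied to $\rho$, which solves the adjoint equation \eqref{adj}, i.e. \eqref{fpnonlocal} with velocity field $b(x,t)=-D_pH(x,Du(x,t))$. Since $\bar\sigma>d+2s$ is exactly the condition $\bar\sigma'<\frac{d+2s}{d+2s-1}$, we are in the setting of Theorem~\ref{estFP2frac}-(i); recalling that $\rho_\tau\in C^\infty(\T)$ with $\|\rho_\tau\|_{L^1(\T)}=1$, it yields
\[
\|\rho\|_{\mathcal{H}_{\bar\sigma'}^{2s-1}(Q_\tau)}\leq C\left(\iint_{Q_\tau}|D_pH(x,Du)|^{m'}\rho\,dxdt+1\right),\qquad m'=1+\frac{d+2s}{\bar\sigma(2s-1)},
\]
with $C$ independent of $\tau$ and $\rho_\tau$. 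It remains to estimate the crossed quantity on the right-hand side.

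The key algebraic point is that the second hypothesis $\bar\sigma\geq\frac{d+2s}{(\gamma'-1)(2s-1)}$ is precisely equivalent to $m'\leq\gamma'$: indeed $m'\leq\gamma'$ rearranges to $\frac{d+2s}{\bar\sigma(2s-1)}\leq\gamma'-1$. Granted this, mass conservation (cf. the remark following \eqref{rhoassfrac}) gives $\iint_{Q_\tau}\rho\,dxdt=\tau\leq T$, so an application of H\"older's inequality in the variable $\rho\,dxdt$ (trivial when $m'=\gamma'$) produces
\[
\iint_{Q_\tau}|D_pH(x,Du)|^{m'}\rho\,dxdt\leq T^{1-\frac{m'}{\gamma'}}\left(\iint_{Q_\tau}|D_pH(x,Du)|^{\gamma'}\rho\,dxdt\right)^{\frac{m'}{\gamma'}}.
\]

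Finally I would invoke Proposition~\ref{EstimateKfrac} to bound $\iint_{Q_\tau}|D_pH(x,Du)|^{\gamma'}\rho\,dxdt\leq C$, with $C$ depending on $C_H$, $\|u\|_{C(\overline Q_T)}$, $\|f\|_{L^{\bar\sigma}(Q_T)}$, $d$, $T$, $s$; this is legitimate since $\bar\sigma\geq\frac{d+2s}{(\gamma'-1)(2s-1)}>\frac{d+2s}{\gamma'(2s-1)}$, so the auxiliary integrability exponent in that proposition may be taken no larger than $\bar\sigma$, while $\bar\sigma>d+2s>\frac{d+2s}{2s}$ covers the other constraint used there. Chaining the three displays and choosing the constants appropriately then closes the estimate. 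I expect the only genuine difficulty to be exponent bookkeeping — namely confirming that the stated threshold on $\bar\sigma$ is exactly $m'\leq\gamma'$ and that $\bar\sigma>d+2s$ simultaneously places us in branch (i) of Theorem~\ref{estFP2frac} and is admissible in Proposition~\ref{EstimateKfrac}; once these are verified, no further work is needed.
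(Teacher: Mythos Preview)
Your proof is correct and follows essentially the same route as the paper: apply Theorem~\ref{estFP2frac}-(i) using $\bar\sigma>d+2s$, observe that the second hypothesis on $\bar\sigma$ is equivalent to $m'\le\gamma'$, and then reduce to Proposition~\ref{EstimateKfrac}. The only cosmetic difference is that you pass from the $m'$-integral to the $\gamma'$-integral via H\"older's inequality with mass conservation, whereas the paper uses Young's inequality for the same step; both are equivalent here.
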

\begin{rem}
Note that condition on $\bar \sigma$ can be rewritten as
\[
\bar \sigma>
\begin{cases}
d+2s&\text{ if }\gamma\leq 2s\\
\frac{d+2s}{(\gamma'-1)(2s-1)}&\text{ if }\gamma> 2s\ .
\end{cases}
\]
\end{rem}
\begin{proof} Since $\bar \sigma' < \frac{d+2s}{d+2s-1}$, \eqref{estFP2frac} applies (with $\sigma = \bar \sigma$), yielding
\[
\|\rho\|_{\mathcal{H}_{\bar \sigma'}^{2s-1}(Q_\tau)}\leq C\left( \iint_{Q_\tau} |D_pH(x,Du(x,t))|^{r'} \rho(x,t) \, dxdt + 1\right),
\]
with 
\[
m' = 1 + \frac{d+2s}{\bar \sigma(2s-1)} \le \gamma'.
\]
If $m'=\gamma'$, use Proposition \ref{EstimateKfrac} to conclude. Otherwise, if $m' < \gamma'$
use Young's inequality first to control $\iint |D_pH(x,Du(x,t))|^{r'} \rho \, dxdt$ with $\iint|D_pH(x,Du)|^{\gamma'} \, dxdt + \tau$.
\end{proof}

\subsection{Sup-norm and integral estimates}
We are now ready to prove the sup-norm estimate for global weak solutions to fractional Hamilton-Jacobi equations in terms of $\|u_0\|_{C(\T)}$. 
\begin{proof}[Proof of Theorem \ref{supnorm}]
We argue as in \cite[Proposition 3.7]{CG2}. First, we prove a bound from above for $u$
\[
u(x,\tau)\leq \|u_0\|_{C(\T)}+C\|f\|_{L^q(\T)}
\]
for all $\tau\in(0,T)$, $x\in \T$ and $q>\frac{d+2s}{2s}$. Consider indeed the strong solution to the backward problem
\begin{equation*}
\begin{cases}
-\partial_t\mu+(-\Delta)^s\mu=0&\text{ in }\T\times(0,\tau)\\
\mu(x,\tau)=\mu_\tau(x)&\text{ in }\T
\end{cases}
\end{equation*}
with $\mu_\tau\in C^\infty(\T)$, $\mu_\tau\geq0$ and $\|\mu_\tau\|_{L^1(\T)}=1$. 
We use $\mu$ as a test function in the weak formulation of the fractional Hamilton-Jacobi equation to deduce
\begin{equation}\label{r}
\int_\T u(x,\tau)\mu_\tau(x)\,dx=\int_\T u(x,0)\mu(x,0)\,dx+\iint_{Q_\tau}f\mu\,dxdt-\iint_{Q_\tau}H(x,Du)\mu\,dxdt\ .
\end{equation}
Using Theorem \ref{estFPfrac} with $b=0$ we find $\|\mu\|_{\H_{\sigma'}^{2s-1}(Q_\tau)}\leq C$ and by Lemma \ref{embs}-(i) we conclude
\[
\|\mu\|_{L^{q'}(Q_\tau)}\leq C\ .
\]
with $q'<\frac{d+2s}{d}$. Then, using the above estimate, H\"older's inequality to the second term of the right-hand side of the above inequality and exploiting that $\|\mu_\tau\|_{L^1(\T)}=1$ one has, choosing $q>\frac{d+2s}{2s}$,
\[
\int_\T u(x,0)\mu(x,0)\,dx+\iint_{Q_\tau}f\mu\,dxdt\leq \|u_0\|_{C(\T)}+C\|f\|_{L^q(Q_\tau)}\ .
\]
By the assumption $H\geq0$ one concludes
\[
\int_\T u(x,\tau)\mu_\tau(x)\,dx\leq \|u_0\|_{C(\T)}+C\|f\|_{L^q(Q_\tau)}
\]
and the claimed estimate from above by duality after passing to the supremum over $\mu_\tau\in L^1(\T)$.\\
To prove the bound from below, we argue using \eqref{repr} with $\omega=0$ and proceed as in Proposition \ref{EstimateKfrac}. Fix some $\eta$ such that
\[
\eta>\frac{d+2s}{(2s-1)\gamma'}\ .
\]
We use the bounds on the Lagrangian, the upper bound obtained in Step 1, and H\"older's inequality to get
\begin{multline*}
C_L^{-1}\iint_{Q_\tau}|D_pH(x,Du)|^{\gamma'}\rho\leq \iint_{Q_\tau}L(x,D_pH(x,Du)\rho\,dxdt\leq 2\|u\|_{C(\overline{Q}_\tau)}+\|f\|_{L^\eta(Q_\tau)}\|\rho\|_{L^{\eta'}(Q_\tau)}\\
\leq 2\|u_0\|_{C(\T)}+C\|f\|_{L^q(Q_\tau)}+\|f\|_{L^\eta(Q_\tau)}\|\rho\|_{L^{\eta'}(Q_\tau)}
\end{multline*}
The strategy of Proposition \ref{EstimateKfrac} provides a bound on $\iint_{Q_\tau}|D_pH(x,Du)|^{\gamma'}\rho$ and thus on $\|\rho\|_{L^{\eta'}(Q_\tau)}$, depending on $\|u_0\|_{C(\T)}$.  Going back to \eqref{r} we have
\[
\int_\T u(x,\tau)\rho_\tau\,dx\geq \int_\T u(x,0)\rho(x,0)-C_L\iint_{Q_\tau}\rho\,dxdt+\iint_{Q_\tau}f\rho\,dxdt
\]
Since $\iint f\rho$ can be bounded from below by H\"older's inequality we get
\[
\int_\T u(x,\tau)\rho_\tau(x)\,dx\geq -\|u(\cdot,0)\|_{C(\T)}-C_L\tau-C
\]
Since $\rho_\tau$ can be arbitrarily chosen so that $\|\rho_\tau\|_{L^1(\T)}=1$, we conclude the desired result.
\end{proof}
To deduce integral estimates we use a $L^p$ version of the adjoint method. We recall that integral estimates for parabolic viscous Hamilton-Jacobi equations have been already obtained in \cite{CG5} using the same method, \cite{GomesBook}, see also \cite[Theorem 3.1]{CGPT} for degenerate problems.
\begin{rem}
Results in Theorem \ref{integest} can be regarded as an a priori estimates since \[\H_q^{2s}\hookrightarrow C([0,T];W^{2s-2s/q,q}(\T))\hookrightarrow C([0,T];L^p(\T))\] with $p=\frac{dq}{d+2s-2sq}$.
\end{rem}
\begin{proof}[Proof of Theorem \ref{integest}]
We denote by $T_k(\omega)=\max\{-k,\min\{k,\omega\}\}$ the truncation at level $k>0$, $u^+=\max\{u,0\}$ and $u^-=(-u)^+$.\\
\textbf{Step 1}. We first prove that
\begin{equation}\label{lp}
\|u^+(\cdot,\tau)\|_{L^{p}(\T)}\leq C(\|u_0^+\|_{L^{p}(\T)}+\|f\|_{L^q(Q_\tau)})
\end{equation}
for all $\tau\in(0,T)$ and $x\in\T$, $p\in[1,\infty)$. For $k>0$ consider the weak nonnegative solution to the following backward problem for the fractional heat equation
\[
\begin{cases}
-\partial_t\mu(x,t)+(-\Delta)^s \mu(x,t)=0&\text{ in }Q_\tau\ ,\\
\mu(x,\tau)=\mu_\tau(x)&\text{ in }\T\ ,
\end{cases}
\]
with $\mu_\tau(x)=\frac{[T_k(u^+(x,\tau)]^{p-1}}{\|u^+(\tau)\|_p^{p-1}}$. This truncation argument is needed to ensure the existence of energy solutions. First, observe that $\|\mu_\tau\|_{p'}\leq 1$. By Corollary \ref{corembW} with $b\equiv0$ we deduce
\[
\|\mu\|_{L^{q'}(Q_\tau)}\leq C\ ,
\]
for $\frac{d+2s}{(2s-1)\gamma'}<q<\frac{d+2s}{2s}$ and $p=\frac{dq}{d+2s-2sq}$ or $q=\frac{d+2s}{2s}$ and any $p>1$, with $C$ not depending on $k$. By parabolic Kato's inequality, cf \cite[Theorem 34]{Leonori}, $u^+$ is a subsolution to
\[
\partial_tu^+(x,t)+(-\Delta)^su^+(x,t)\leq[f(x,t)-H(x,Du(x,t))]\chi_{\{u>0\}}\text{ in }Q_\tau\ ,
\]
where $\chi_A$ denotes the indicator function of a given set $A$.
Using $\mu$ as a test function in the weak formulation of the above equation we show that
\[
\int_\T u^+(x,\tau)\mu_\tau(x)\,dx\leq \int_\T u_0^+(x)\mu(x,0)\,dx+\iint_{Q_\tau\cap \{u>0\}}f\mu\,dxdt-\iint_{Q_\tau\cap \{u>0\}}H(x,Du)\mu \,dxdt\ .
\]
We apply H\"older's inequality to the second term of the right-hand side  of the above equality, the fact that $H\geq0$ and the fact that the fractional heat equation preserves the $L^p$ norms, i.e. $\|\mu(t)\|_{L^{p'}(\T)}\leq 1$ for all $t\in[0,\tau]$, to get, after sending $k\to\infty$,
\[
\|u^+(\tau)\|_{L^p(\T)}\leq C(\|u_0^+\|_{L^p(\T)}+\|f^+\|_{L^q(Q_\tau)})\ .
\]
\textbf{Step 2}.  To prove the bound on the negative part, consider for $k>0$ the solution $\rho=\rho_k$ to the adjoint problem
\[
\begin{cases}
-\partial_t\rho(x,t)+(-\Delta)^s \rho(x,t)+\mathrm{div}(D_pH(x,Du)\chi_{\{u<0\}}\rho)&\text{ in }Q_\tau\ ,\\
\rho(x,\tau)=\mu_\tau(x)&\text{ in }\T\ .
\end{cases}
\]
First, by Corollary \ref{corembW} we have
\begin{equation}\label{bo1}
\|\rho(0)\|_{L^{p'}(\T)}+\|\rho\|_{L^{q'}(Q_\tau)}\leq C\left(\iint_{Q_\tau}|D_pH(x,Du)|^{\frac{d+2s}{(2s-1)q}}\chi_{\{u<0\}}\rho\,dxdt+\|\rho_\tau\|_{L^{p'}(\T)}\right)
\end{equation}
with $C$ not depending on $k$. We note again that in view of the parabolic Kato's inequality $u^-$ is a weak subsolution to
\[
\partial_tu^-(x,t)+(-\Delta)^su^-(x,t)\leq[-f(x,t)+H(x,Du(x,t))]\chi_{\{u<0\}}\text{ in }Q_\tau\ .
\]
Owing to the representation formula we get
\begin{multline*}
\int_\T u^-(\tau)\rho(\tau)\,dx+\iint_{Q_\tau}[-D_pH(x,Du)\cdot Du^- -H(x,Du)]\chi_{\{u<0\}}\rho\,dxdt\\
\leq \int_\T u_0^-\rho(0)\,dx-\iint_{Q_\tau\cap \{u<0\}}f\rho\ .
\end{multline*}
By the assumption on the Lagrangian we get
\begin{multline*}
[-D_pH(x,Du)\cdot Du^--H(x,Du)]\chi_{\{u<0\}}=L(x,D_pH(x,-Du^-))\chi_{\{u<0\}}\\
\geq [C_L^{-1}|D_pH(x,Du)|^{\gamma'}-C_L]\chi_{\{u<0\}}\ .
\end{multline*}
Then, we have

\begin{multline*}
\int_\T u^-(\tau)\rho(\tau)\,dx+C_L^{-1}\iint_{Q_\tau}|D_pH(x,Du)|^{\gamma'}\rho\,dxdt-C_L\iint_{Q_\tau}\chi_{\{u<0\}}\rho\,dxdt\\
\leq \|u_0^-\|_{L^p(\T)}\|\rho(0)\|_{L^{p'}(\T)}+\|\rho\|_{L^{q'}(Q_\tau)}\|f^-\|_{L^q(Q_\tau)}\ .
\end{multline*}

Then, we get
\begin{multline}\label{bo2}
\int_\T u^-(\tau)\rho(\tau)\,dx+C_L^{-1}\iint_{Q_\tau}|D_pH(x,Du)|^{\gamma'}\rho\,dxdt-C_L\iint_{Q_\tau}\chi_{\{u<0\}}\rho\,dxdt\\
\leq C\|u_0^-\|_{L^p(\T)}\|\rho(0)\|_{L^{p'}(\T)}+\|\rho\|_{L^{q'}(Q_\tau)}\|f^-\|_{L^q(Q_\tau)}\ .
\end{multline}
The term $\|\rho(0)\|_{L^{p'}(\T)}$ can be bounded owing to Corollary \ref{corembW}, so using \eqref{bo1} we conclude
\begin{multline*}
\int_\T u^-(\tau)\rho(\tau)\,dx+C_L^{-1}\iint_{Q_\tau}|D_pH(x,Du)|^{\gamma'}\rho\,dxdt\\
\leq C(\|u_0^-\|_{L^{p}(\T)}+\|f^-\|_{L^q(Q_\tau)})\left(\iint_{Q_\tau}|D_pH(x,Du)|^{\frac{d+2s}{(2s-1)q}}\chi_{\{u<0\}}\rho\,dxdt+1\right)\\
+C_L\iint_{Q_\tau}\chi_{\{u<0\}}\rho\,dxdt\ .
\end{multline*}
Since $q>\frac{d+2s}{(2s-1)\gamma'}$ one can use Young's inequality and that $\int\rho(t)\,dx\leq \|\mu_\tau\|_{p'}\leq 1$ to conclude, after letting $k\to\infty$,
\begin{multline}
\int_\T u^-(\tau)\rho(\tau)\,dx \leq C(\|u_0^-\|_{L^{p}(\T)}+\|f^-\|_{L^q(Q_\tau)})\\
+CC_L\tau((\|u_0^-\|_{L^{p}(\T)}+\|f^-\|_{L^q(Q_\tau)})^{\frac{\gamma'q(2s-1)}{\gamma'q(2s-1)-(d+2s)}}+C_L\tau\ .
\end{multline}
Combining Step 1 and Step 2 we conclude
\[
\|u\|_{L^\infty(0,\tau;L^p(\T))}\leq C
\]
for $p$ as above.
\end{proof}
\begin{rem}
We underline that knowing a well-posedness result, together with integrability estimates, for the adjoint equation \eqref{fpnonlocal} when $b\in L^{\frac{d+2s}{2s-1}}_{x,t}$ would allow to give an estimate of $u\in L^{d\frac{\gamma-1}{2s-\gamma}}_{x,t}$ when $\gamma<2s$ under finer properties of the data, see e.g. \cite[Remark 3.6]{CG5}. This would be also consistent with results in \cite{Magliocca}. However, such properties of the adjoint variable in the borderline case $b\in L^{\frac{d+2s}{2s-1}}_{x,t}$ are at this stage an open problem, cf \cite{LSU,BOP,BCCS} for the case $s=1$.
\end{rem}
\subsection{H\"older regularity results}
We are now in position to prove H\"older bounds for solutions to the fractional Hamilton-Jacobi equation \eqref{hjb} as in Theorem \ref{mainholder}.

\begin{proof}[Proof of Theorem \ref{mainholder}]
Since $H$ is convex and superlinear, we write for a.e. $(x,t)\in Q_T$
\[
H(x,Du(x,t))=\sup_{\nu\in\R^d}\{\nu\cdot Du(x,t)-L(x,\nu)\}\ .
\]
Let $0<\omega<\tau<T$. By the weak formulation of \eqref{hjb} we obtain
\begin{multline}\label{HJoptimalLOC}
\int_\omega^\tau \langle \partial_t u(t), \varphi(t) \rangle dt +  \iint_{Q_{\omega,\tau}}  (-\Delta)^\frac12 u(x, t) \, (-\Delta)^{s-\frac12} \varphi(x, t) + [\Theta(x, t) \cdot Du(x, t)-L(x,\Theta(x, t))] \varphi \, dxdt \\
\le  \iint_{Q_{\omega,\tau}} f(x, t) \varphi(x, t) \,dxdt
\end{multline}
for all test functions $\varphi \in  \H_2^{2s-1}(Q_{\omega,\tau}) \cap L^\infty(Q_{\omega,\tau})$ and measurable $\Theta : Q_{\omega,\tau} \to \R^d$ such that $L(\cdot, \Theta(\cdot, \cdot)) \in L^1(Q_{\omega,\tau})$ and $\Theta\cdot Du \in  L^1(Q_{\omega,\tau})$. Note that the previous inequality becomes an equality if $\Theta(x,t) = D_pH(x,Du(x,t))$ in $Q_{\omega,\tau}$.

We fix $\rho_\tau\in C^\infty(\T)$, $\|\rho_\tau\|_{L^1(\T)}=1$ and $\rho_\tau\geq0$. Set \[w(x,t)=\eta(t) u(x,t),\] 
where $\eta\in C_0^\infty((0,T])$ is a smooth function such that $0\leq \eta(t)\leq1$ for all $t$.
 
 Use now \eqref{HJoptimalLOC} with $\Xi(x,t) = D_pH(x,Du(x,t))$ and $ \varphi = \eta \rho \in  \H_2^{2s-1}(Q_{\omega,\tau}) \cap L^\infty(Q_{\omega,\tau})$, where $\rho$ is the adjoint variable (i.e. the weak solution to \eqref{adj}) to find
\begin{multline}\label{loc1}
\int_\omega^\tau \langle \partial_t w(t), \rho(t) \rangle dt \\
+  \iint_{Q_{\omega,\tau}}  (-\Delta)^\frac12 w(x, t) \, (-\Delta)^{s-\frac12} \varphi(x, t)+ D_pH(x,Du) \cdot Dw \rho -L(x,D_pH(x,Du)) \eta \rho \, dxdt \\
=  \iint_{Q_{\omega,\tau}} f \eta \rho \,dxdt +  \iint_{Q_{\omega,\tau}} u \eta' \rho \,dxdt.
\end{multline}
Then, we use $w \in \H_2^1(Q_T)$ as a test function in the weak formulation of the adjoint problem satisfied by $\rho$ to get
\begin{equation}\label{inte1}
-\int_\omega^\tau \langle \partial_t\rho(t), w(t) \rangle dt + \iint_{Q_{\omega,\tau}}(-\Delta)^{s-\frac12}\rho(-\Delta)^\frac12 w  + D_pH(x,Du) \rho \cdot Dw \,dxdt = 0.
\end{equation}
We now fix $\omega$ small so that $\eta(\omega) = 0$. We then obtain, subtracting the previous equality to \eqref{loc1}, and integrating by parts in time, the identity
\begin{multline}\label{optloc}
\int_{\T}w(x,\tau)\rho_{\tau}(x)dx= \iint_{Q_{\omega,\tau}}\eta(t)f(x,t)\rho(x,t)dxdt\\
+\iint_{Q_{\omega,\tau}}\eta(t)L\big(x,D_pH(x,Du(x,t))\big)\rho(x,t)dxdt+\iint_{Q_{\omega,\tau}} \eta'(t)u(x,t)\rho(x,t)dxdt.
\end{multline}

For $h>0$ and $\xi\in\R^d$, $|\xi|=1$, define $\hat{\rho}(x,t):=\rho(x-h\xi,t)$. After a change of variables in the nonlocal problem \eqref{adj}, it can be seen that $\hat{\rho}$ satisfies, using $w$ as a test function,
\begin{multline}\label{inte2}
-\int_\omega^\tau \langle \partial_t \hat \rho(t), w(t) \rangle dt +\iint_{Q_{\omega,\tau}}(-\Delta)^{s-\frac12}\hat \rho(-\Delta)^\frac12 w\,dxdt\\
+ \iint_{Q_{\omega,\tau}}  D_pH(x-h\xi,Du(x-h\xi,t))\hat \rho(x,t) \cdot Dw(x,t) \,dxdt = 0.
\end{multline}
As before, we plug the vector field $\Theta(x,t) = D_pH(x-h\xi,Du(x-h\xi,t))$ and the test function $\varphi = \eta \hat \rho$ in \eqref{HJoptimalLOC} to conclude
\begin{multline*}
\int_\omega^\tau \langle \partial_t w(t), \hat \rho(t) \rangle dt + \\ \iint_{Q_{\omega,\tau}}(-\Delta)^{s-\frac12}\hat \rho(-\Delta)^\frac12 w + D_pH(x-h\xi,Du(x-h\xi,t)) \cdot Dw \hat \rho -L(x,D_pH(x-h\xi,Du(x-h\xi,t))) \eta \hat \rho \, dxdt \\
\le  \iint_{Q_{\omega,\tau}} f \eta \hat \rho \,dxdt +  \iint_{Q_{\omega,\tau}} u \eta' \hat \rho \,dxdt.
\end{multline*}
We subtract \eqref{inte2} to the previous inequality and obtain 
\begin{multline*}
\int_{\T}w(x,\tau)\hat{\rho}_{\tau}(x)dx \le 
\iint_{Q_{\omega,\tau}} L(x,D_pH(x-h\xi,Du(x-h\xi,t))) \eta \hat \rho \, dxdt+ \iint_{Q_{\omega,\tau}} f \eta \hat \rho \,dxdt \\+  \iint_{Q_{\omega,\tau}} u \eta' \hat \rho \,dxdt,
\end{multline*}
which, after the change of variables $x \mapsto x + h\xi$, becomes
\begin{multline}\label{suboptloc}
\int_{\T}w(x + h\xi,\tau){\rho}_{\tau}(x)dx \le 
 \iint_{Q_{\omega,\tau}} \eta(t) L(x+h\xi,D_pH(x,Du(x,t)))\rho(x,t) \, dxdt + \iint_{Q_{\omega,\tau}} f \eta \hat \rho \,dxdt \\+  \iint_{Q_{\omega,\tau}} u \eta' \hat \rho \,dxdt\ .
\end{multline}
Taking the difference between \eqref{suboptloc} and \eqref{optloc} we conclude
\begin{multline}\label{step1}
\int_{\T}(w(x + h\xi, \tau)-  w(x,\tau)) {\rho}_{\tau}(x)dx \\
\le \iint_{Q_{\omega,\tau}} \eta(t) \Big(L(x+h\xi,D_pH(x,Du(x,t))) - L(x,D_pH(x,Du(x,t)))\Big)\rho(x,t) \, dxdt\\
+ \iint_{Q_{\omega,\tau}} \eta(t) f(x,t) \big(\rho(x-h\xi, t)- \rho(x,t) \big) \,dxdt \\
+  \iint_{Q_{\omega,\tau}} \eta'(t) u(x,t)\big( \rho(x-h\xi,t) - \rho(x,t) \big) \,dxdt =(I)+(II)+(III)
\end{multline}

\textbf{Step 2.} We now estimate all the right hand side terms of \eqref{step1}. We emphasize that constants $C, C_1, \ldots$ are not going to depend on $\tau, \rho_\tau, h, \xi$. \\
Proposition \ref{EstimateKfrac} and Corollary \ref{EstimateK1} applied with $\sigma=\frac{d+2s}{d+4s-1-\gamma'(2s-1)}$ yield
\[
\iint_{Q_{\omega,\tau}}|D_pH(x,Du(x,t))|^{\gamma'}\rho(x,t)\,dxdt+\|\rho\|_{\H_{\frac{d+2s}{d+4s-1-\gamma'(2s-1)}}^{2s-1}(Q_{\omega,\tau})}\leq C_2\ .
\]
Let $\alpha\in(0,1)$ to be determined later. For $\nu=D_pH(x,p)$ we have $L(x,\nu)=\nu\cdot p-H(x,p)$ and thus
\[
L(x+h\xi,D_pH(x,Du(x,t))) - L(x,D_pH(x,Du(x,t))\leq H(x,Du(x,t))-H(x+\xi,Du(x,t))\ .
\]
Next, using ($H_\alpha$) and the above inequality, we get
\begin{multline*}
\left| \iint_{Q_{\omega,\tau}} \eta(t) \Big(L(x+h\xi,D_pH(x,Du(x,t))) - L(x,D_pH(x,Du(x,t)))\Big)\rho(x,t) \, dxdt \right|  \\
\le C_2C_L |h|^\alpha \iint_{Q_{\omega,\tau}} \big(|D_pH(x,Du(x,t))|^{\gamma'} +1\big) \rho(x,t) \, dxdt \le C |h|^\alpha.
\end{multline*}
We can apply the Sobolev embedding in Lemma \ref{embs}-(ii) with $\delta=q'$, $p=\frac{d+2s}{d+4s-1-\gamma'(2s-1)}(\leq\frac{d+2s}{d+2s-1}<2)$, giving $\alpha=\gamma'(2s-1)-\frac{d+2s}{q}$, and Lemma \ref{embnik}, to show
\begin{multline*}
\left|  \iint_{Q_{\omega,\tau}} \eta(t) f(x,t) \big(\rho(x-h\xi, t)- \rho(x,t) \big) \,dxdt \right |   \\
\le |h|^\alpha \iint_{Q_{\omega,\tau}} |f(x,t)|\,\frac{|\big(\rho(x-h\xi, t)- \rho(x,t) \big)|}{|h|^\alpha} \,dxdt \le |h|^\alpha \|f\|_{L^{q}(Q_{\omega,\tau})} \|\rho\|_{L^{q'}(\omega,\tau;N^{\alpha,q'}(\T))}\\
\leq  C_1|h|^\alpha \|f\|_{L^{q}(Q_{\omega,\tau})} \|\rho\|_{L^{q'}(\omega,\tau;H^{\alpha}_{q'}(\T))}
\leq  C_2|h|^\alpha \|f\|_{L^{q}(Q_{\omega,\tau})}\|\rho\|_{\H_{\frac{d+2s}{d+4s-1-\gamma'(2s-1)}}^{2s-1}(Q_{\omega,\tau})}\\
\leq C_3|h|^\alpha \|f\|_{L^{q}(Q_{\omega,\tau})}\ .
\end{multline*}
Finally, as above, we conclude
\begin{multline*}
\left|  \iint_{Q_{\omega,\tau}} \eta'(t) u(x,t)\big( \rho(x-h\xi,t) - \rho(x,t) \big) \,dxdt \right| \\
\le |h|^\alpha \big(\sup_{(0, T)}|\eta'(t)| \big) \|u\|_{L^{q'}(Q_{\omega,\tau})} \|\rho\|_{L^{q'}(\omega,\tau;N^{\alpha,q'}(\T))} \\
\leq C_1|h|^\alpha \big(\sup_{(0, T)}|\eta'(t)| \big) \|u\|_{C(\overline{Q}_{T})}\|\rho\|_{L^{q'}(\omega,\tau;H^{\alpha}_{q'}(\T))}
\le C_2|h|^\alpha \sup_{(0, T)}|\eta'(t)|.
\end{multline*}
Plugging all the estimates in \eqref{step1} we obtain
\begin{equation}\label{step2}
  \int_{\T}(w(x + h\xi, \tau)-  w(x,\tau)) {\rho}_{\tau}(x)dx   \\ \le C|h|^\alpha \Big(
  \sup_{(0, T)}|\eta'(t)| + 1 \Big)
\end{equation}
when $q>\frac{d+2s}{(2s-1)\gamma'}$.\\
\par\smallskip
\textbf{Step 3.} Since \eqref{step2} holds for all smooth $\rho_\tau \ge 0$ with $\|\rho_\tau\|_{L^1(\T)} = 1$, we get
\begin{equation*}
\eta(\tau) [u(x + h\xi, \tau)-  u(x,\tau)] \le C|h|^\alpha \Big(
\sup_{(0, T)}|\eta'(t)| + 1 \Big)
\end{equation*}
for all $x \in \T$, $\xi \in \R^d$, $h > 0$. Thus, $u(\cdot, \tau)$ is H\"older continuous, and
\begin{equation*}
\eta(\tau) [u(\cdot, \tau)]_{C^\alpha(\T)} \le C \Big(
\sup_{(0, T)}|\eta'(t)| + 1 \Big).
\end{equation*}
Since $C$ does not depend on $\tau \in (0,T)$, we take $t_1\in(0,T)$, $\eta=\eta(t)$ nonnegative smooth function on $[0,T]$ such that $\eta(t)\leq1$, $\eta(t)=1$ on $[t_1,T]$ and vanishing on $[0,t_1/2]$. This proves Theorem \ref{mainholder}-(C).\\

%
To prove the global-in-time bound in (D) one may observe that if $u\in \H_q^{2s}(Q_T)$ is a strong solution with $q>\frac{d+2s}{(2s-1)\gamma'}$, $\gamma\geq 2s$ (or a global weak solution), then the solution is global in time and one can take $\omega=0$ throughout the proof setting also $\eta\equiv1$ on $[0,T]$. Being the solution global, norms $\|u\|_{C(\overline{Q}_T)}$ can be replaced by $\|u_0\|_{C(\T)}$ by Theorem \ref{supnorm}, which are in turn bounded by $\|u_0\|_{C^\alpha(\T)}$. Now, an additional term of the form
\[
\int_\T \frac{u(x+h,0)-u(x,0)}{|h|^\alpha}\rho(x,0)\,dx
\]
arises, which can be immediately bounded by $[u_0]_{C^\alpha(\T)}$ since $\int_\T \rho(0)=1$.

\end{proof}

\begin{rem}
Using the same scheme of Theorem \ref{mainholder}, we believe one can even handle the case $\gamma<2s$ by considering appropriate weak solutions (not continuous on the whole cylinder $Q_T$) to \eqref{hjb}. 
\end{rem}

\begin{rem}\label{lipnonloc}
An approach similar to that for the H\"older bounds and the one in \cite[Theorem 1.1]{CG2}, which exploits the regularity properties in Corollary \ref{EstimateK1}, yields a Lipschitz regularization effect for \eqref{hjb} whenever $f\in L^q(0,T;H_q^{2-2s}(\T))$ for $\sigma=q$ as in Corollary \ref{EstimateK1}. This requires to impose that $|D_{x}H(x,p)|\leq C_H(|p|^{\gamma}+1)$ instead of ($H_\alpha$), see \cite[Chapter 7]{TesiAle}.
\end{rem}

\subsection{Maximal $L^q$-regularity}
\subsubsection{An overview of the results in the viscous case}\label{sec;conj}
Let us first consider the following viscous problem
\begin{equation}\label{intro}
-\Delta u+|Du|^\gamma=f(x)\text{ in }\Omega\ ,\gamma>1\ ,
\end{equation}
where $f$ is an unbounded source term belonging to a suitable Lebesgue space $L^q$. In \cite{LionsSeminar,Napoli} P.-L. Lions proposed the following conjecture:
\begin{conj}\label{cl}
Let $f\in L^q(\Omega)$, $q>1$, for some
\begin{equation}\label{reg1}
q>\frac{d}{\gamma'}=\frac{d(\gamma-1)}{\gamma}\ .
\end{equation}
and $\gamma>1$. Then, every solution to \eqref{intro} satisfies the a priori estimate
\[
\|D^2u\|_{L^q}+\||Du|^\gamma\|_{L^q}\leq C(\|f\|_{L^q},d,q,\gamma)\ .
\]
Moreover, the estimate is false when $q\leq d/\gamma'$.
\end{conj}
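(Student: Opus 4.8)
The plan is to split the statement into the a priori bound for $q>d(\gamma-1)/\gamma$ and the optimality of this threshold. The first observation is that everything reduces to the single estimate $\||Du|^\gamma\|_{L^q(\Omega)}\le C(\|f\|_{L^q},d,q,\gamma)$: once this is available, rewriting the equation as $-\Delta u=f-|Du|^\gamma$ and applying the Calder\'on--Zygmund estimate for the Laplacian immediately yields the $W^{2,q}$ bound. So the heart of the matter is gradient integrability, and I would attack it through the integral Bernstein method in the spirit of \cite{CG4}.

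\textbf{The a priori bound via integral Bernstein.} Set $w:=|Du|^2$. Differentiating the equation in $x_k$, multiplying by $D_ku$, summing over $k$ and using Bochner's identity gives
\[
-\tfrac12\Delta w+|D^2u|^2+\tfrac12\,D_pH(x,Du)\cdot Dw=Du\cdot Df
\]
(with $D_pH=\gamma|Du|^{\gamma-2}Du$ in the model case). I would test this against $w^{a}\zeta^2$ for a smooth cutoff $\zeta$ and an exponent $a\ge0$ to be optimized, integrate by parts, discard the favourable $\zeta$-localization terms, and use the diagonal bound $|D^2u|^2\ge\tfrac1d(\Delta u)^2=\tfrac1d(|Du|^\gamma-f)^2$ to produce a coercive contribution $\gtrsim\int w^{a+\gamma}\zeta^2$. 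The third-order terms arising from $\int\zeta^2w^a\Delta w$ and from a further integration by parts removing $Df$ are absorbed into $\int\zeta^2w^{a-1}|D^2u|^2$ by Young's inequality, using the good sign of the ellipticity; the first-order term $\int D_pH\cdot Dw\,w^a\zeta^2$ is integrated by parts and controlled via the coercivity $D_pH\cdot Du-H\gtrsim|Du|^\gamma$. This closes an integral inequality which, combined with a Sobolev embedding and interpolation between $L^1$ (via the equation itself, $\int_\Omega|Du|^\gamma\le\int_\Omega|f|\,(+\text{ boundary terms})$) and the energy level reached, bootstraps the integrability of $|Du|^\gamma$; the exponent balance in this last step closes precisely when $q>d(\gamma-1)/\gamma$, which is exactly where the threshold comes from.

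\textbf{Optimality.} For the sharpness I would exhibit explicit radial profiles. Near a point of $\Omega$, take $u(x)\sim|x|^{\delta}$ (or $u(x)\sim|x|^{\delta}(\log\tfrac1{|x|})^{\beta}$ at the endpoint $q=d(\gamma-1)/\gamma$), choosing $\delta$ so that $|Du|^\gamma\sim|x|^{(\delta-1)\gamma}$ sits exactly at the edge of $L^q_{\rm loc}$, and glue the profile smoothly to a regular function away from the singular point. Setting $f:=-\Delta u+|Du|^\gamma$, one checks that $f\in L^q$ for every $q\le d(\gamma-1)/\gamma$ while $D^2u\notin L^q$, contradicting the a priori estimate; a short case analysis (power profiles when $\gamma$ is away from the special exponents forcing cancellation of the leading term, Hopf--Cole--type or logarithmic profiles otherwise) covers all $\gamma>1$.

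\textbf{Main obstacle.} The delicate point is the borderline exponent: a naive Bernstein iteration loses a fixed amount of integrability per step, so reaching the sharp $q>d/\gamma'$ forces a non-perturbative, carefully tuned choice of the powers $a$ in the test functions together with a tight control ensuring the constants remain bounded as $q$ approaches the threshold. The genuinely unfavourable sign of the first-order term $D_pH(x,Du)\cdot Dw$ is what makes the absorption subtle, and the whole argument rests on Bochner's identity and on the scalar, local structure of $-\Delta$. This is precisely why it does not transfer to $(-\Delta)^s$ or to the time-dependent problems \eqref{hjb}, where no pointwise Bochner-type identity is available and one must instead rely, as in the rest of this paper, on duality combined with maximal regularity in Lebesgue spaces.
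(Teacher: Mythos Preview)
The paper does not actually prove Conjecture~\ref{cl}. It is stated purely as background in Section~\ref{sec;conj}: the author recalls Lions' conjecture and then remarks that ``A proof of Conjecture~\ref{cl} has been proposed in \cite{CG4} appropriately modifying the Bernstein method,'' before moving on to the fractional results that are the genuine subject of the paper. So there is no ``paper's own proof'' to compare against here.

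That said, your sketch is broadly aligned with what the paper attributes to \cite{CG4}: the integral Bernstein method applied to $w=|Du|^2$, using Bochner, testing against powers $w^a$, and closing via interpolation precisely at the threshold $q>d/\gamma'$. Your final paragraph also correctly identifies why this approach does not transfer to the fractional/parabolic setting treated in the body of the paper. One caution on the optimality part: your radial construction only shows that the a priori estimate \emph{can} fail for $q\le d/\gamma'$, which is what the conjecture asserts; but you should be careful that the profile $u\sim|x|^\delta$ with the right $\delta$ actually produces $f\in L^q$ after the cancellation between $-\Delta u$ and $|Du|^\gamma$ (both terms are individually of the same borderline order, and the point is that their sum gains integrability while $D^2u$ does not). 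This is standard but worth spelling out rather than leaving as ``one checks.''
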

A byproduct of this statement is a maximal $L^q$-regularity for solutions to \eqref{intro}. In other words, \eqref{intro} behaves, in terms of regularity, as the Poisson equation (cf \cite[Theorem 9.9]{GT}) under the regime \eqref{reg1} of the integrability exponent $q$ of the right-hand side $f$. 
Conjecture \ref{cl} completes the results in \cite{Lions85} for the subcritical range of the integrability of the forcing term, where it is shown a Lipschitz regularity result when $f\in L^q$, $q>d$, and every $\gamma>1$, obtained via an integral Bernstein method. A proof of Conjecture \ref{cl} has been proposed in \cite{CG4} appropriately modifying the Bernstein method, while an extension to the parabolic viscous framework has been already provided in \cite{CG5}. More precisely, in \cite{CG5} it is proved that maximal regularity for viscous \eqref{hjb} occurs for strong solutions when $f\in L^q(Q_T)$ with 
\[
q > 
\begin{cases}
(d+2)\frac{\gamma-1}{\gamma} & \text{if $1 + \frac{2}{d+2} < \gamma < 2$} \medskip \\ 
(d+2)\frac{\gamma-1}{2} & \text{if $\gamma \ge 2$}.
\end{cases}
\]
Note that the threshold $q=(d+2)(\gamma-1)/\gamma$ can be regarded as a parabolic analogue to the one in \eqref{reg1}. We also refer to \cite{CPorr} for more recent maximal regularity results for viscous ($s=1$) problems with quadratic gradient growth and right-hand sides in mixed Lebesgue scales, and to \cite{Swiech} (and the references therein) for some maximal regularity properties for fully nonlinear  second order uniformly parabolic problems in the context of $L^p$ viscosity solutions.

\subsubsection{The fractional case}

We first recall the following Calder\'on-Zygmund regularity result for the fractional heat equation with unbounded potential.
\begin{lemma}\label{maximal}
Let $u\in\H_q^{2s}(Q_T)$, $q>1$, be a strong solution to 
\[
\begin{cases}
\partial_t u+(-\Delta)^su=V(x,t)&\text{ in }Q_T\\
u(x,0)=u_0&\text{ in }\T\ .
\end{cases}
\]
with $V\in L^q(Q_T)$ and $u_0\in (L^q(\T),H_q^{2s}(\T))_{1-1/q,q}\simeq W^{2s-2s/q,q}(\T)$. Then, there exists a constant $C$ that remains bounded for bounded values of $T$, such that
\[
\|u\|_{\H_q^{2s}(Q_T)}=\|\partial_t u\|_{L^q(Q_T)}+\| u\|_{L^q(0,T;H_q^{2s}(\T))}\leq C(\|V\|_{L^q(Q_T)}+\|u_0\|_{W^{2s-2s/q,q}(\T)})\ .
\]
As a consequence, every strong solution to \eqref{hjb} with $u_0\in W^{2s-2s/q,q}(\T)$ satisfies
\[
\|u\|_{\H_q^{2s}(Q_T)}\leq C(\|f\|_{L^q(Q_T)}+\|H(x,Du)\|_{L^q(Q_T)}+\|u_0\|_{W^{2s-2s/q,q}(\T)})\ .
\]
\end{lemma}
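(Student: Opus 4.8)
The plan is to obtain the stated estimate as a direct consequence of the maximal $L^q$-regularity theorem for the fractional heat equation (Theorem \ref{fracregtor}) together with the basic growth bound on the Hamiltonian from \eqref{H}. First I would note that, by Definition \ref{strong}, a strong solution $u\in\H_q^{2s}(Q_T)$ solves \eqref{hjb} a.e.\ in $Q_T$, so $u$ solves the linear problem
\[
\begin{cases}
\partial_t u+(-\Delta)^su=V(x,t):=f(x,t)-H(x,Du(x,t))&\text{ in }Q_T\ ,\\
u(x,0)=u_0&\text{ in }\T\ ,
\end{cases}
\]
in the strong sense, with $u_0=u(0)\in W^{2s-2s/q,q}(\T)$ by the definition of strong solution. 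The first step is therefore to check that $V\in L^q(Q_T)$: since $u\in L^q(0,T;H_q^{2s}(\T))$, by Lemma \ref{inclstat}-(i) and the embedding of $H_q^{2s}$ into $H_q^1\simeq W^{1,q}$ one has $Du\in L^q(Q_T)$ in the subquadratic-type situations, but more importantly the growth condition \eqref{H} gives the pointwise bound $|H(x,Du)|\le C_H(|Du|^\gamma+1)$, and the hypothesis $q>\frac{d+2s}{(2s-1)\gamma'}$ together with Remark \ref{strongweak} (the embedding $\H_q^{2s}(Q_T)\hookrightarrow L^{\gamma q}(0,T;W^{1,\gamma q}(\T))$) shows $|Du|^\gamma\in L^q(Q_T)$; hence $V=f-H(\cdot,Du)\in L^q(Q_T)$ with $\|V\|_{L^q(Q_T)}\le \|f\|_{L^q(Q_T)}+C_H\||Du|^\gamma\|_{L^q(Q_T)}+C_H|Q_T|^{1/q}$.

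The second step is simply to apply the abstract maximal regularity estimate of Theorem \ref{fracregtor} with $\mu=2s$ (so that $\mathbb{H}_q^{\mu-2s}(Q_T)=\mathbb{H}_q^0(Q_T)=L^q(Q_T)$ and the trace space is $W^{2s-2s/q,q}(\T)$). This yields
\[
\|u\|_{\H_q^{2s}(Q_T)}=\|\partial_t u\|_{L^q(Q_T)}+\|u\|_{L^q(0,T;H_q^{2s}(\T))}\le C\big(\|V\|_{L^q(Q_T)}+\|u_0\|_{W^{2s-2s/q,q}(\T)}\big),
\]
with $C$ bounded for bounded $T$. Combining with the bound on $\|V\|_{L^q(Q_T)}$ from Step~1 gives the first displayed inequality of the lemma after absorbing the constant $C_H|Q_T|^{1/q}$ into the right-hand side. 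For the ``as a consequence'' statement about \eqref{hjb} one just re-runs the same computation keeping $\|H(x,Du)\|_{L^q(Q_T)}$ explicit rather than estimating it by $\||Du|^\gamma\|_{L^q(Q_T)}$, i.e.\ one writes $\|V\|_{L^q(Q_T)}\le\|f\|_{L^q(Q_T)}+\|H(x,Du)\|_{L^q(Q_T)}$ and applies Theorem \ref{fracregtor} directly.

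Since this is purely a linear-perturbation argument, there is no genuine obstacle: this is not an a priori estimate in the usual sense because $\||Du|^\gamma\|_{L^q}$ (equivalently $\|H(x,Du)\|_{L^q}$) still sits on the right-hand side and has not been controlled by $\|f\|_{L^q}$ alone — that closure is precisely the content of Theorem \ref{maxregmain}, proved later via the Gagliardo-Nirenberg interpolation machinery (Lemmas \ref{GN1} and \ref{GN2}) and the H\"older/integral bounds. The only point requiring a small amount of care is the justification that $u$ indeed belongs to the class in which Theorem \ref{fracregtor} applies and that the initial trace is taken in $W^{2s-2s/q,q}(\T)$; both are immediate from Definition \ref{strong} and Lemma \ref{trace}. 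One should also remark, as the statement does, that the constant $C$ depends only on $d,T,p=q,s$ and stays bounded as $T$ ranges over bounded intervals, which is inherited verbatim from Theorem \ref{fracregtor}.
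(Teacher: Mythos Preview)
Your proposal is correct and matches the paper's approach: the paper does not give a separate proof of this lemma but presents it as a recall of the Calder\'on--Zygmund result, i.e.\ Theorem \ref{fracregtor} specialized to $\mu=2s$, with the ``as a consequence'' part following immediately by setting $V=f-H(x,Du)$. Your additional verification that $V\in L^q(Q_T)$ via Remark \ref{strongweak} is a useful clarification but not strictly needed for the statement as written, since the second displayed inequality only asks for the bound in terms of $\|H(x,Du)\|_{L^q}$ without asserting finiteness.
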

By gathering all the previous results and the estimates in Theorems \ref{supnorm} and Theorem \ref{integest} we have the following maximal $L^q$-regularity result for \eqref{hjb} with (fractional) sub-natural growth.
\begin{proof}[Proof of Theorem \ref{maxregmain}]
We exploit the Gagliardo-Nirenberg inequality in Lemma \ref{GN1} to get for $\gamma\in(1,2s)$
\begin{equation}\label{GNbo}
\||Du(t)\|_{L^{\gamma q}(Q_T)}\leq C_1\|u(t)\|_{H_q^{2s}(\T)}^\theta\|u(t)\|_{L^{z}(\T)}^{1-\theta}
\end{equation}
for $z\in(1,\infty)$ and $\theta\in\left[\frac{1}{2s},1\right)$ with
\[
\frac{1}{\gamma q}=\frac{1}{d}+\theta\left(\frac1q-\frac{2s}{d}\right)+\frac{1-\theta}{z}\ .
\]
By Theorems \ref{supnorm} and \ref{integest} we have
\[
\|u(t)\|_{L^\infty(0,T;L^z(\T))}<\infty
\]
for any $z\leq p=\frac{dq}{d+2s-2sq}$ if $q<\frac{d+2s}{2s}$, $z\in[1,\infty)$ when $q=\frac{d+2s}{2s}$, $z=\infty$ for $q>\frac{d+2s}{2s}$. Since $q>\frac{d+2s}{(2s-1)\gamma'}$, we conclude $p>\frac{d(\gamma-1)}{2s-\gamma}$. Then, we choose $z$ close to $\frac{d(\gamma-1)}{2s-\gamma}$ so that 
\[
\theta\gamma=\frac{\frac1z+\frac1d-\frac{1}{\gamma q}}{\frac1z+\frac{2s}{d}-\frac1q}\gamma<1
\]
and $\theta\in[1/2s,1/\gamma)$. Raising \eqref{GNbo} to $\gamma q$ and integrating in time we have
\[
\|Du\|_{L^{\gamma q}(Q_T}\leq C_2\|u\|_{L^q(0,T;H_q^{2s}(\T))}^{\gamma\theta}\|u\|_{L^\infty(0,T;L^{z}(\T))}^{\gamma(1-\theta)}\ .
\]
Then, by \eqref{H} we deduce for positive constants $C_3,C_4>0$
\[
\|H(x,Du)\|_{L^q(Q_T)}\leq C_3(1+\||Du|\|^\gamma_{L^{\gamma q}(Q_T)})\leq C_4(\|u\|_{L^q(0,T;H_q^{2s}(\T))}^{\gamma\theta}\|u\|_{L^\infty(0,T;L^{z}(\T))}^{\gamma(1-\theta)}+1)\ .
\]
Using Lemma 	\ref{maximal} and Young's inequality we have
\begin{multline*}
\|u\|_{\H_q^{2s}(Q_T)}\leq C_5(\|H(x,Du)\|_{L^q(Q_T)}+\|f\|_{L^q(Q_T)}+\|u_0\|_{W^{2-\frac{2}{q},q}(\T)})\\
\leq C_6(\|u\|_{L^q(0,T;H_q^{2s}(\T))}^{\gamma\theta}\|u\|_{L^\infty(0,T;L^{z}(\T))}^{\gamma(1-\theta)}+\|f\|_{L^q(Q_T)}+\|u_0\|_{W^{2s-\frac{2s}{q},q}(\T)})\\
\leq \frac12\|u\|_{L^q(0,T;H_q^{2s}(\T))}+C_7\|u\|_{L^\infty(0,T;L^{z}(\T))}^{\frac{\gamma(1-\theta)}{1-\gamma\theta}}+C_6(\|f\|_{L^q(Q_T)}+\|u_0\|_{W^{2s-\frac{2s}{q},q}(\T)})\ .
\end{multline*}
We then absorb the term $\frac12\|u\|_{\H_q^{2s}(Q_T)}$ on the left-hand side and use the integral estimate in Theorem \ref{integest} in $L^\infty(L^z)$ to conclude the assertion. Note that here we used Remark \ref{embin} to make sure that $\|u_0\|_{L^p(\T)}$ is bounded by $\|u_0\|_{W^{2s-2s/q,q}(\T)}$.  Then, we have
\[
\||Du|\|_{L^{\gamma q}(Q_T)}\leq C(\|f\|_{L^q(Q_T)},\|u_0\|_{W^{2s-\frac{2s}{q},q}(\T)})
\]
for a possibly different constant $C>0$. To prove the case $\gamma\geq 2s$, we begin with the H\"older bound in Theorem \ref{mainholder}
\[
\sup_{t\in[0,T]}\|u(t)\|_{C^\alpha(\T)}<\infty
\]
valid for $\gamma\geq 2s$ as follows:
\begin{itemize}
\item For $\gamma=2s$ we get $\alpha= 2s-\frac{d+2s}{q}$ such that $\alpha\in(0,2s-1)$ when $q<d+2s$, $\alpha<1$ when $d+2s\leq q<\frac{d+2s}{2s-1}$, while we have any $\alpha\in(0,1)$ when $q\geq\frac{d+2s}{2s-1}$;
\item For $2s\leq \gamma<\frac{1}{2-2s}$ (i.e. $\gamma'(2s-1)>1$), we get $\alpha= \gamma'(2s-1)-\frac{d+2s}{q}$ such that $\alpha\in(0,2s-1)$ if $q<\frac{d+2s}{(2s-1)(\gamma'-1)}$, $\alpha\in(2s-1,1)$ if $\frac{d+2s}{(2s-1)(\gamma'-1)}\leq q<\frac{(d+2s)(\gamma-1)}{1-(2-2s)\gamma}$, while we have any $\alpha\in (0,1)$ if $q\geq \frac{(d+2s)(\gamma-1)}{1-(2-2s)\gamma}$;
\item For $\gamma\geq\frac{1}{2-2s}$ (i.e. $\gamma'(2s-1)\leq1$), we get $\alpha=\gamma'(2s-1)-\frac{d+2s}{q}\in (0,1)$ when $q>\frac{d+2s}{\gamma'(2s-1)}(\geq d+2s)$.
\end{itemize}
We use Lemma \ref{GN2} to get for a.e. $t\in(0,T)$
\[
\|Du(t)\|_{L^{\gamma q}(\T)}\leq C\|u(t)\|_{H_q^{2s}(\T)}^{\beta}\|u(t)\|_{C^{\alpha}(\T)}^{1-\beta}
\]
and hence after raising the above inequality to the power $\gamma q$ and integrating in time we get
\[
\|Du\|_{L^{\gamma q}(Q_T)}\leq C\|u\|_{L^q(0,T;H_q^{2s}(\T))}^{\beta}\|u\|_{L^\infty(0,T;C^{\alpha}(\T))}^{1-\beta}\ ,
\]
where
\[
\beta\in\left[\frac{1-\alpha}{2s-\alpha},1\right)\ ,\alpha\neq 2s-\frac{d}{q}\ .
\]
Choosing $\beta=\frac{1-\alpha}{2s-\alpha}$ (or $\alpha$ close to 1 when $\gamma<\frac{1}{2-2s}$ and $q\geq \frac{(d+2s)(\gamma-1)}{1-(2-2s)\gamma}$) we get $\beta\gamma<1$ when 
\[
\alpha=\gamma'(2s-1)-\frac{d+2s}{q}>\frac{\gamma-2s}{\gamma-1}\ ,
\]
i.e. when $\gamma<s/(1-s)$
\[
q>\frac{(d+2s)(\gamma-1)}{2s-(2-2s)\gamma}\ .
\]
Observe also that $\frac{1}{2-2s}<\frac{2s}{2-2s}=\frac{s}{1-s}$. In particular, when $\gamma=2s$ we have $q>\frac{(d+2s)(2s-1)}{4s^2-2s}=\frac{d+2s}{2s}$. 
Then, we deduce 
\[
\|H(x,Du)\|_{L^q(Q_T)}\leq C_1(1+\||Du|\|^{\gamma}_{L^{\gamma q}(Q_T)})\leq C_2(\|u\|_{L^q(0,T;H_q^{2s}(\T))}^{\gamma\beta}\|u\|_{L^\infty(0,T;C^\alpha(\T))}^{\gamma(1-\beta)}+1)\ ,
\]
which allows to exploit generalized Young's inequality since $\gamma\beta<1$ and conclude as in the case $\gamma<2s$.
\end{proof}

\begin{rem}\label{restr}
The restriction $\gamma<\frac{s}{1-s}$ is not new in the literature. Under the assumption $\gamma\in\left[\frac{d+2s}{d+1},\frac{s}{1-s}\right)$ it has been recently proved existence of solutions for fractional Kardar-Parisi-Zhang equations with right-hand side in $L^q$ under suitable restrictions on $q$ for problems posed on bounded $C^{1,1}$ domains of $\R^d$. 
\end{rem}

\appendix
\section{Nonlocal equations with drift terms}\label{eqdrift}
We begin with the following comparison principle for \eqref{drift}, whose proof follows the strategy implemented in \cite{Feo}. By weak solutions to \eqref{drift} we mean a function in the space $\H_2^1(Q_T)$ satisfying the equation in the sense of distributions.
\begin{prop}\label{comp}
Let $v_1,v_2\in L^2(\omega,\tau;H^1_2(\T))$ be respectively a weak sub- and supersolution to \eqref{drift} with $b\in L^{\sP}(Q_{\omega,\tau})$, $\sP\geq \frac{d+2s}{2s-1}$ such that $v_1(\omega)\leq v_2(\omega)$. Then $v_1\leq v_2$ in $Q_{\omega,\tau}$.
\end{prop}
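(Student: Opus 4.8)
The plan is to run a standard energy/comparison argument adapted to the nonlocal setting, following the scheme of \cite{Feo}. First I would set $w := v_1 - v_2 \in L^2(\omega,\tau; H^1_2(\T))$, which is a weak subsolution to the homogeneous equation $\partial_t w + (-\Delta)^s w - b \cdot Dw \le 0$ in $Q_{\omega,\tau}$ with $w(\omega) \le 0$. The goal is to show $w^+ \equiv 0$. I would test the inequality with $\varphi = w^+ = \max\{w,0\}$, which is admissible since $w \in \H_2^1(Q_{\omega,\tau})$ and truncation preserves that class. Integration by parts in time (justified as in Remark~\ref{intparts}, using the embedding $\H_2^1 \hookrightarrow C([\omega,\tau]; L^2(\T))$) gives $\frac12 \|w^+(t)\|_{L^2(\T)}^2$ from the time term, since $w^+(\omega) = 0$. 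For the nonlocal term I would use the fundamental inequality $\int_\T (-\Delta)^s w \, w^+ \, dx \ge \int_\T |(-\Delta)^{s/2} w^+|^2 \, dx = [w^+]_{H^s}^2$ (a Stroock--Varopoulos/Córdoba--Córdoba type inequality for the fractional Laplacian, valid on the torus), so that this term controls, rather than obstructs, the estimate. This yields
\[
\frac12 \|w^+(t)\|_{L^2(\T)}^2 + \int_\omega^t [w^+(\cdot,s)]_{H^s(\T)}^2 \, ds \le \int_\omega^t \int_\T b \cdot D w^+ \, w^+ \, dx \, ds .
\]

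The main obstacle is to absorb the drift term $\iint b \cdot Dw^+ \, w^+$ into the left-hand side, using only $b \in L^{\sP}$ with $\sP \ge \frac{d+2s}{2s-1}$ rather than $b \in L^\infty$. Writing $b \cdot Dw^+ \, w^+ = \frac12 b \cdot D((w^+)^2)$ and integrating by parts would require control on $\dive b$, which we do not assume; instead I would keep it in the form $\iint b\, w^+ \cdot Dw^+$ and estimate by Hölder: for fixed $t$,
\[
\left| \int_\T b\, w^+ \cdot Dw^+ \, dx \right| \le \|b(\cdot,t)\|_{L^{\sP}(\T)} \, \|w^+(\cdot,t)\|_{L^{\sP^*}(\T)} \, \|Dw^+(\cdot,t)\|_{L^2(\T)},
\]
where $\frac{1}{\sP} + \frac{1}{\sP^*} + \frac12 = 1$. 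Since $\sP \ge \frac{d+2s}{2s-1} > \frac{d}{2s-1}$, one has $\sP^* < \frac{2d}{d-2(2s-1)}$, which is exactly the Sobolev exponent for the embedding $H^{2s-1}(\T) \hookrightarrow L^{\sP^*}(\T)$; interpolating between $L^2$ and this Sobolev embedding (as in the computations of Step~1 of the proof of Theorem~\ref{estadjfrac}), one bounds $\|w^+\|_{L^{\sP^*}}$ by $\|w^+\|_{L^2}^{1-\vartheta} \|w^+\|_{H^s}^{\vartheta}$ for a suitable $\vartheta \in (0,1)$ (here one uses $s > \tfrac12$ crucially). Then Young's inequality splits the product so that a small multiple of $[w^+]_{H^s}^2$ is absorbed on the left, leaving a term of the form $C \, \kappa(t) \, \|w^+(\cdot,t)\|_{L^2(\T)}^2$ with $\kappa(t) := \|b(\cdot,t)\|_{L^{\sP}(\T)}^{p}$ for an appropriate exponent $p$ making $\kappa \in L^1(\omega,\tau)$ (this is where the strict-versus-borderline nature of the exponent is used; if $\sP$ is exactly critical one may need a smallness or a sharper interpolation, cf.\ Remark~\ref{criticalAS}).

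Once in the form
\[
\frac12 \|w^+(t)\|_{L^2(\T)}^2 \le C \int_\omega^t \kappa(s) \, \|w^+(\cdot,s)\|_{L^2(\T)}^2 \, ds
\]
with $\kappa \in L^1(\omega,\tau)$, Grönwall's inequality immediately gives $\|w^+(t)\|_{L^2(\T)} = 0$ for all $t \in [\omega,\tau]$, hence $v_1 \le v_2$ a.e.\ in $Q_{\omega,\tau}$, as claimed. I expect the delicate point to be the precise interpolation exponents and the verification that $\kappa \in L^1$ under the stated (borderline-allowed) hypothesis on $\sP$; the spatial case $\sP = \infty$ or the time-integrable mixed case would be handled analogously but more easily.
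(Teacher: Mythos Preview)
Your plan has a structural gap at the absorption step. Testing with $w^+$ produces, from the diffusion, control on $[w^+]_{H^s}^2$; but the drift term $\iint b\cdot Dw^+\,w^+$ involves the \emph{full} gradient $\|Dw^+\|_{L^2}$, i.e.\ the $H^1$ seminorm. After H\"older and your interpolation of $\|w^+\|_{L^{\sP^*}}$ between $L^2$ and $H^s$ (or $H^{2s-1}$), the factor $\|Dw^+\|_{L^2}$ is still present and cannot be absorbed: since $s<1$ there is no inequality bounding $\|Dw^+\|_{L^2}$ by a combination of $\|w^+\|_{L^2}$ and $[w^+]_{H^s}$. So Young's inequality cannot reduce the right-hand side to $\kappa(t)\|w^+\|_{L^2}^2$ as you claim, and the Gr\"onwall step never closes.

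The paper circumvents exactly this mismatch by testing not with $w_k$ but with $(-\Delta)^{1-s}w_k$. Then the diffusion term becomes $\iint (-\Delta)^s w_k\,(-\Delta)^{1-s}w_k=\iint|(-\Delta)^{1/2}w_k|^2\sim\|Dw_k\|_{L^2}^2$, which \emph{does} match the gradient in the drift, while the time term yields an $H^{1-s}$ energy $\|(-\Delta)^{(1-s)/2}w_k\|_{L^2}^2$. The price is a new factor $\|(-\Delta)^{1-s}w_k\|_{L^{\frac{2(d+2s)}{d+2-2s}}}$ in the drift estimate, handled via a tailored Gagliardo--Nirenberg embedding $L^\infty_t H^{1-s}_x\cap L^2_t H^1_x\hookrightarrow L^{\frac{2(d+2s)}{d+2-2s}}_t H^{2-2s}_x$. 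Moreover, rather than Gr\"onwall, the paper uses truncations $w_k=(w^+-k)^+$ and argues by contradiction: the resulting inequality has $\|b\|_{L^{\frac{d+2s}{2s-1}}(E_k)}$ on the right, which tends to zero as $k\uparrow\sup w^+$ because $|E_k|\to 0$. This is what allows the borderline exponent $\sP=\frac{d+2s}{2s-1}$; your Gr\"onwall route, even if the absorption were fixed, would need the strict inequality (as you yourself suspect in the parenthetical remark).
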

\begin{proof}
Let us suppose that $v_1,v_2$ are two weak sub- and supersolutions to \eqref{drift} such that $w=v_1-v_2>0$ in a subset $D\subset Q_T$ such that $|D|>0$. Denote by 
\[
w_k=\begin{cases}
w^+-k&\text{ if }w^+>k\\
0&\text{ otherwise }
\end{cases}
\]
for $k\in(0,M)$, $M=\sup_D w^+$. We use $\varphi=(-\Delta)^{1-s}w_k\in L^2(H^{2s-1})$ as a test function in the weak formulation of the difference of the two equations. We then get
\[
\mathrm{esssup}_{t\in(0,T)}\int_\T |(-\Delta)^{\frac{1-s}{2}}w_k|^2\,dx+\iint_{Q_T}|(-\Delta)^\frac12 w_k|^2\,dxdt\leq \iint_{Q_T}|b||Dw_k||(-\Delta)^{1-s}w_k|\,dxdt\ .
\]
Denote by 
\[
E_k:=\{(x,t)\in Q_T:k<w^+<\sup_Dw^+\}\ .
\]
We then note that by H\"older's inequality
\begin{multline*}
\iint_{Q_T}|b||Dw_k||(-\Delta)^{1-s}w_k|\,dxdt\leq \|b\|_{L^{\frac{d+2s}{2s-1}}(E_k)}\|Dw_k\|_{L^2(Q_T)}\|(-\Delta)^{1-s}w_k\|_{L^{\frac{2(d+2s)}{d+2-2s}}(Q_T)}\\
\leq C\|b\|_{L^{\frac{d+2s}{2s-1}}(E_k)}\|Dw_k\|_{L^2(Q_T)}\|w_k\|_{L^\frac{2(d+2s)}{d+2-2s}(0,T;H^{2-2s}_{\frac{2(d+2s)}{d+2-2s}}(\T))}\ .
\end{multline*}
We use the fractional Gagliardo-Nirenberg inequality on Besov scales \cite[Theorem 4.1]{HMOW} saying that
\[
\|u\|_{B^{\nu}_{p1}(\T)}\leq C\|u\|_{B^{\nu_0}_{p_0\infty}(\T)}^{1-\theta}\|u\|_{B^{\nu_1}_{p_1\infty}(\T)}^\theta
\]
for
\[
\frac{d}{p}-\nu=(1-\theta)\left(\frac{d}{p_0}-\nu_0\right)+\theta \left(\frac{d}{p_1}-\nu_1\right);
\]
\[
\nu_0-\frac{d}{p_0}\neq \nu_1-\frac{d}{p_1};
\]
\[
\nu\leq (1-\theta)\nu_0+\theta \nu_1\ .
\]
We then take $\nu=2-2s$, $q=1$, $p_0=p_1=2$, $\nu_0=1-s$, $\nu_1=1$ and get using Lemma \ref{inclBesovBessel}-(iii)
\[
\|u(t)\|_{H^{2-2s}_p(\T)}\leq C\|u(t)\|_{H^{1-s}_2(\T)}^{1-\theta}\|u(t)\|_{H^{1}_2(\T)}^\theta
\]
with $\theta\in(1/s-1,1)$ and $p$ fulfilling the above compatibility condition. We then integrate in time to get
\[
\int_0^T\|u(t)\|_{H^{2-2s}_p(\T)}^p\,dt\leq C \int_0^T \|u(t)\|_{H^{1-s}_2(\T)}^{(1-\theta)p}\|u(t)\|_{H^{1}_2(\T)}^{\theta p}\,dt\ .
\]

We take $\theta=2/p$ for $p=\frac{2(d+2s)}{d+2-2s}$, so that $\theta=\frac{d+2-2s}{d+2s}$ (hence $\theta+1=\frac{2d+2}{d+2s}>\frac1s$), $\theta p=2$ and $(1-\theta)p=\frac{4(2s-1)}{d+2-2s}$. Then, it is immediate to check that 
\[
\frac{d}{p}-2+2s=(1-\theta)\left(\frac{d}{2}-1+s\right)+\theta \left(\frac{d}{2}-1\right) .
\]
This yields
\begin{equation}\label{embmix}
\|u\|_{L^\frac{2(d+2s)}{d+2-2s}(0,T;H^{2-2s}_{\frac{2(d+2s)}{d+2-2s}}(\T))}^{2\frac{d+2s}{d+2-2s}}\leq C\|u\|_{L^\infty(0,T;H^{1-s}(\T))}^{\frac{4(2s-1)}{d+2-2s}}\left(\int_0^T\|u(t)\|_{H^1_2(\T)}^2\right)\ .
\end{equation}
Note that this agrees with the classical viscous case $s=1$, cf \cite[Lemma 3.1]{Feo}. This yields the embedding
\[
L^\infty(0,T;H^{1-s}_2(\T))\cap L^2(0,T;H^1(\T)) \hookrightarrow L^\frac{2(d+2s)}{d+2-2s}(0,T;H^{2-2s}_{\frac{2(d+2s)}{d+2-2s}}(\T))\ .
\]
Embedding \eqref{embmix}, after applying Young's inequality, gives
\begin{multline*}
\|u\|_{L^\frac{2(d+2s)}{d+2-2s}(0,T;H^{2-2s}_{\frac{2(d+2s)}{d+2-2s}}(\T))}\leq C_1\|u\|_{L^\infty(0,T;H^{1-s}(\T))}^{\frac{2(2s-1)}{d+2s}}\|u\|_{L^2(0,T;H^1_2(\T))}^\frac{d+2-2s}{d+2s}\\
\leq C_2(\|u\|_{L^\infty(0,T;H^{1-s}(\T))}+\|u\|_{L^2(0,T;H^1(\T))})\ .
\end{multline*}
We then conclude
\begin{multline}\label{contr}
\left(\mathrm{esssup}_{t\in(0,T)}\|(-\Delta)^{\frac{1-s}{2}}w_k\|_{L^2(\T)}+\|(-\Delta)^\frac12w_k\|_{L^2(Q_T)}\right)^2\\
\leq C\|b\|_{L^{\frac{d+2s}{2s-1}}(E_k)}\left(\mathrm{esssup}_{t\in(0,T)}\|(-\Delta)^{\frac{1-s}{2}}w_k\|_{L^2(\T)}+\|(-\Delta)^\frac12w_k\|_{L^2(Q_T)}\right)^2\ .
\end{multline}
As $k\to M$ this yields $\|b\|_{L^{\frac{d+2s}{2s-1}}(E_k)}\to0$, and hence \eqref{contr} gives a contradiction.
\end{proof}

We have the following existence result obtained by similar arguments used for elliptic problems, cf \cite[Lemma 3.4]{AP}, \cite{Feo} and the references therein.
\begin{prop}
Let $v_\omega\in H^{1-s}(\T)$ and $b\in  L^\sP(Q_{\omega,\tau})$, $\sP\geq\frac{d+2s}{2s-1}$. Then, there exists a solution $v\in\H^{1}_2(Q_{(\omega,\tau)})$ to \eqref{drift}.
\end{prop}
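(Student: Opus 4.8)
The plan is to obtain the solution to \eqref{drift} by a Galerkin or, more conveniently, a Leray–Schauder fixed point argument combined with the a priori bound that is essentially the content of the comparison estimate \eqref{contr}. First I would observe that the change of variables $w(x,t):=v(x,\tau-t)$ turns \eqref{drift} into a forward problem $\partial_t w+(-\Delta)^sw+\tilde b\cdot Dw=\tilde f$ with $w(0)=v_\omega$, so it suffices to produce a solution of the forward Cauchy problem in the energy class $L^\infty(0,\tau;H^{1-s}(\T))\cap L^2(0,\tau;H^1(\T))$, which is exactly $\H^1_2(Q_{\omega,\tau})$ once one checks that the equation forces $\partial_t w\in L^2(0,\tau;H^{1-2s}(\T))$.

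The main step is the a priori estimate. Given a solution (or an approximation), I would test the equation with $\varphi=(-\Delta)^{1-s}w\in L^2(0,\tau;H^{2s-1}(\T))$, exactly as in the proof of Proposition \ref{comp}, to get
\[
\frac12\|(-\Delta)^{\frac{1-s}{2}}w(t)\|_{L^2(\T)}^2+\iint_{Q_t}|(-\Delta)^{\frac12}w|^2\,dxds\leq \frac12\|(-\Delta)^{\frac{1-s}{2}}v_\omega\|_{L^2(\T)}^2+\iint_{Q_t}|\tilde f\,\varphi|+|b||Dw||(-\Delta)^{1-s}w|\,dxds\ .
\]
The crossed term is handled precisely by the mixed-norm Gagliardo–Nirenberg embedding \eqref{embmix}, i.e. $L^\infty(0,\tau;H^{1-s})\cap L^2(0,\tau;H^1)\hookrightarrow L^{\frac{2(d+2s)}{d+2-2s}}(0,\tau;H^{2-2s}_{\frac{2(d+2s)}{d+2-2s}})$, together with Hölder's inequality with exponents $(\tfrac{d+2s}{2s-1},2,\tfrac{2(d+2s)}{d+2-2s})$ and Young's inequality; since $\sP\geq\frac{d+2s}{2s-1}$, on a short time interval $[0,t^*]$ the factor $\|b\|_{L^{\frac{d+2s}{2s-1}}(Q_{t^*})}$ (or a power of $t^*$ when $\sP$ is strictly larger, by Hölder in time) is small, so the crossed term is absorbed into the left-hand side. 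The $\tilde f$-term is absorbed similarly using $H^{2s-1}\hookrightarrow L^2$. This yields a bound on $[0,t^*]$ with $t^*$ depending only on $\|b\|$, and iterating over consecutive intervals gives the bound on all of $[0,\tau]$; the bound on $\partial_t w$ in $L^2(0,\tau;H^{1-2s}(\T))$ then follows directly from the equation by duality, placing $w\in\H^1_2(Q_{\omega,\tau})$.

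With the a priori estimate in hand I would set up the fixed point: for $m\in L^2(0,\tau;H^1(\T))$ let $v=\mathcal S[m;\sigma]$ solve $\partial_t v+(-\Delta)^sv=\sigma(\tilde f-b\cdot Dm)$, $v(0)=\sigma v_\omega$; since $b\cdot Dm\in L^{\sigma'}$ for a suitable $\sigma'$ by the Hölder computation above and $\tilde f\in L^2(0,\tau;H^{1-2s})$, Theorem \ref{fracregtor} gives $v\in\H^1_2$, and compactness of $\mathcal S$ follows from Lemma \ref{compact} exactly as in the proof of Theorem \ref{estadjfrac} (using $s>1/2$). The a priori bound shows all fixed points of $v=\mathcal S[v;\sigma]$ are uniformly bounded in $\H^1_2$, so Leray–Schauder (\cite[Theorem 11.6]{GT}) yields a fixed point at $\sigma=1$, which is the desired weak solution. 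The main obstacle is making the absorption of the crossed $|b||Dw||(-\Delta)^{1-s}w|$ term rigorous in the endpoint case $\sP=\frac{d+2s}{2s-1}$, where one cannot gain smallness from a power of the time length and must instead rely on the absolute continuity of $t\mapsto\|b\|_{L^{\frac{d+2s}{2s-1}}(\T\times(0,t))}$ to choose $t^*$ small; one should also double-check that the regularized/Galerkin approximations used to justify the formal test function $\varphi=(-\Delta)^{1-s}w$ converge appropriately, but this is routine.
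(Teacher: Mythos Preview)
Your overall scheme---Leray--Schauder fixed point, with the map $z\mapsto v$ solving the linear fractional heat equation with right-hand side $\sigma b\cdot Dz$, compactness via Lemma~\ref{compact}---matches the paper's argument. The genuine difference is in how you obtain the a~priori bound on fixed points. You run a direct energy estimate: test with $(-\Delta)^{1-s}w$, invoke the mixed embedding \eqref{embmix} and H\"older to place the crossed term $\int |b||Dw||(-\Delta)^{1-s}w|$ on the right with a factor $\|b\|_{L^{\frac{d+2s}{2s-1}}(Q_{t^*})}$, absorb on a short interval (using absolute continuity of the integral in the endpoint case), and iterate. The paper instead argues by contradiction: assuming an unbounded sequence of fixed points, it normalizes $w_n=v_n/\|v_n\|_{\H^1_2}$, passes to a limit $w$ satisfying $\partial_t w+(-\Delta)^s w\le |b||Dw|$ with $w(\omega)=0$, and then invokes the comparison principle of Proposition~\ref{comp} (and the same argument for $-w$) to force $w\equiv 0$, contradicting $\|w_n\|=1$.

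Your route is more self-contained and constructive, giving an explicit bound in terms of the data without appealing to Proposition~\ref{comp}; it is also the textbook parabolic strategy and handles the endpoint $\sP=\frac{d+2s}{2s-1}$ by the absolute-continuity trick you flag. The paper's route is shorter once Proposition~\ref{comp} is in hand and neatly recycles that result, but the bound it produces is qualitative (non-explicit). Both are correct; the test-function computation and the embedding \eqref{embmix} you use are exactly the ingredients the paper employs inside the proof of Proposition~\ref{comp}, so the analytic core is shared even though the logical structure differs.
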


\begin{proof}
We prove the existence via the Leray-Schauder fixed point theorem. Define the map $\Psi:\H^{1}_2(Q_{\omega,\tau})\times[0,1]\to \H^{1}_2(Q_{\omega,\tau})$ as the map $z\longmapsto \Psi[z;\sigma]=v$ with $v$ solving the parametrized equation
\[
\partial_tv+(-\Delta)^sv=\sigma b(x,t)\cdot Dz\text{ in }Q_\omega\ , v(x,\omega)=\sigma v_\omega\text{ in }\T\ ,
\]
for the parameter $\sigma\in[0,1]$. First, observe that $\Psi[z;0]=0$ by standard results for fractional heat equations. We then observe that $\H_2^1\hookrightarrow L^{\frac{2(d+2s)}{d+2s-2}}\hookrightarrow L^{\frac{2(d+2s)}{d+2-2s}}$. By H\"older's inequality we observe $b\cdot Dz\in L^2_{x,t}$ when $b\in L^\sP_{x,t}$ with $\sP\geq\frac{d+2s}{2s-1}$ since $Dz\in L^{\frac{2(d+2s)}{d+2-2s}}$. Then, maximal regularity yields $v\in \H_2^{2s}$ and hence $v\in L^2(H^1)$. Finally, by using the equation we get $\partial_t v\in L^2(H^{-1})$, so that the map is well-defined in $\H_2^1$.
We now prove the compactness of the map. Let $z_n$ be a bounded sequence in $\H^{1}_2(Q_{\omega,\tau})$ and $v_n=\Psi[z_n,\sigma]$. Arguing as above, we get $v_n\in\H_2^1(Q_{\omega,\tau})$ and then exploit the compactness of $\H^{1}_2(Q_{\omega,\tau})$ onto $L^2(Q_{\omega,\tau})$ (see Lemma \ref{compact} with $\mu=1$), so as to have the strong convergence of $v_n$ to $v$ in $L^2(Q_{\omega,\tau})$ and the weak convergence of $(-\Delta)^{\frac12}v_n$ to $(-\Delta)^{\frac12}v$ in $L^2(Q_\omega)$ along a subsequence. The compactness of $\Psi$ follows by using now the test function $\varphi:=(-\Delta)^{1-s}(v_n-v)$ that satisfies the requirements $\varphi\in L^2(H^{2s-1})$ with $\partial_t\varphi\in L^2(H^{-1})$. 
We then write 
\begin{multline*}
\iint_{Q_{\omega,\tau}}|(-\Delta)^\frac12(v_n-v)|^2\,dxdt 
\leq\iint_{Q_{\omega,\tau}}b\cdot Dz_n (-\Delta)^{1-s}(v_n-v)\,dxdt-\iint_{Q_\omega}(-\Delta)^{\frac12}v\cdot (-\Delta)^{\frac12}(v_n-v)\,dxdt\\
-\iint_{Q_{\omega,\tau}}\partial_tv_n(-\Delta)^{1-s}(v_n-v)\,dxdt\ ,
\end{multline*}
which shows the strong convergence of $(-\Delta)^{\frac12}v_n$ to $(-\Delta)^{\frac12}v$ in $L^2(Q_{\omega,\tau})$ by using that $\partial_tv_n\in L^2(H^{-1})$, the weak convergence of $(-\Delta)^{\frac12}v_n$ to $(-\Delta)^{\frac12}v$ in $L^2(Q_\omega)$ and the fact that $|b||Dz|\in L^2(Q_{\omega,\tau})$ (note that we have also that $(-\Delta)^{1-s} v_n$ converges weakly to $(-\Delta)^{1-s} v$). The strong convergence of the time derivative follows then by duality.\\
To prove the a priori estimate for every fixed point $g\in \H_2^1(Q_{\omega,\tau})$ of the map $\Psi$, that is satisfying $g= \Psi[g;\sigma]$, we argue by contradiction as in the elliptic case. Indeed, suppose that for any $n$ one has $v_n\in  \H_2^1(Q_{\omega,\tau})$, $\sigma_n\in[0,1]$ such that $v_n=\Psi[v_n;\sigma_n]$ with $\|v_n\|_{ \H_2^1(Q_{\omega,\tau})}\geq n$. This means that
\[
\|v_n\|_{ \H_2^1(Q_{\omega,\tau})}\to\infty\ .
\]
The condition $v_n=\Psi[v_n;\sigma_n]$ gives
\[
\partial_tv_n+(-\Delta)^s v_n=\sigma_nb(x,t)\cdot Dv_n
\]
with $v_n(x,\omega)=\sigma_n v_\omega$. Set $w_n=v_n/\|v_n\|_{\H_2^1(Q_{(\omega,\tau)})}$. Then, we have
\[
\partial_tw_n+(-\Delta)^s w_n\leq |b||Dw_n|
\]
and $w_n$ is bounded in $ \H_2^1(Q_{(\omega,\tau)})$ (actually one has $\|w_n\|_{ \H_2^1(Q_{(\omega,\tau)})}=1$). Arguing as above, we have $|b||Dw_n|\in L^2(Q_{\omega,\tau})$, and hence $w_n\in\H_2^1$ and, in particular, the sequence $w_n$ is relatively compact in $\mathbb{H}^{1}_2(Q_{\omega,\tau})$ using the previous arguments. Therefore, there exists a subsequence to $w_n$ converging strongly to $w$ and by letting $n\to\infty$ we deduce
\[
\partial_tw+(-\Delta)^s w\leq |b||Dw|
\]
with $w(x,0)=0$. By the comparison principle, we deduce $w\leq0$. Similarly, using the same proof for $-w$ we conclude $w\geq0$ to get $w\equiv0$. Since $\|w_n\|_{\H^{1}_2(Q_{\omega,\tau})}=1$ and $w_n$ is strongly convergent we get a contradiction. We then conclude the existence of solutions by the Leray-Schauder fixed point theorem \cite[Theorem 11.6]{GT}. 
\end{proof}
\begin{rem}
The proof of the above result remains the same if one adds a forcing term $f\in L^2_{x,t}$.
\end{rem}
\begin{rem}
The existence of a suitable weak solution when $b\in L^\sP(Q_\tau)$, $\sP>\frac{d+2s}{2s-1}$, has been obtained in \cite[Example 3 p.335]{JS}. 
\end{rem}
\begin{rem}
Well-posedness of \eqref{drift} has been recently addressed in \cite{PPS}, see also \cite{AP,Ochoa} for the stationary problem, in the context of $L^1$ data for problems posed on bounded domains. In this case $v\in L^p(W^{1,p})$, $p<\frac{d+2s}{d+1}$ since for $b\in L^\sP$, $\sP>\frac{d+2s}{2s-1}$ one has $b\cdot Dv\in L^1_{x,t}$.
\end{rem}


\medskip

\begin{flushright}
\noindent \verb"alessandro.goffi@math.unipd.it"\\
Dipartimento di Matematica ``Tullio Levi-Civita'' \\
Universit\`a di Padova\\
via Trieste 63, 35121 Padova (Italy)
\end{flushright}

\end{document}